\newtheorem{theorem}{Theorem}[section]
\newtheorem{proposition}[theorem]{Proposition}
\newtheorem{definition}[theorem]{Definition}
\newtheorem{corollary}[theorem]{Corollary}
\newtheorem{lemma}[theorem]{Lemma}
\newtheorem{remark}[theorem]{Remark}
\numberwithin{equation}{section} \numberwithin{theorem}{section}
\numberwithin{equation}{section}
\newtheorem{Def}[theorem]{Definition}
\newtheorem{example}[theorem]{Example}
\numberwithin{equation}{section}
\def\N{I\!\!N}
\newcommand{\ve}{\eta}
\def\Limsup{\mathop{{\rm Lim}\,{\rm sup}}}
\def\Liminf{\mathop{{\rm Lim}\,{\rm inf}}}
\def\ve{\varepsilon}
\def\tilde{\widetilde}
\def\emp{\emptyset}
\def\ox{\overline{x}}
\def\oy{\overline{y}}
\def\tto{\rightrightarrows}
\def\Tilde{\widetilde}
\def\Bar{\overline}
\def\ve{\varepsilon}
\def\epsilon{\varepsilon}
\def\ox{\bar{x}}
\def\oy{\bar{y}}
\def\gph{\mbox{\rm gph}\,}
\def\dim{\mbox{\rm dim}\,}
\def\dn{\downarrow}
\def\O{\Omega}
\def\ph{\varphi}
\def\emp{\emptyset}
\def\oR{\Bar{\R}}
\def\lm{\lambda}
\def\dd{\delta}
\def\Th{\Theta}
\def \N{{\rm I\!N}}
\def \R{{\rm I\!R}}
\def\Limsup{\mathop{{\rm Lim}\,{\rm sup}}}
\def\Limsup{\mathop{{\rm Lim}\,{\rm sup}}}
\numberwithin{equation}{section}
\title{Stability Criteria and Calculus Rules via Conic Contingent Coderivatives in Banach Spaces}
\author{Boris S. Mordukhovich\thanks{Department of Mathematics, Wayne State University
Detroit MI 48202 (e-mail: aa1086@wayne.edu). Research of this author was partly supported by the U.S. National Science Foundation under grants DMS-1808978 and DMS-2204519, by the Australian Research Council under grant DP-190100555, and by Project 111 of China under grant D21024.}
\and Pengcheng Wu\thanks{Department of Applied Mathematics, The Hong Kong Polytechnic University, Hong Kong
(e-mail: pcwu0725@163.com).}
\and Xiaoqi Yang\thanks{Department of Applied Mathematics, The Hong Kong Polytechnic University, Kowloon, Hong Kong (mayangxq@polyu.edu.hk). Research of this author was partly supported by the Research Grants Council of Hong Kong (PolyU  15205223).}}
\begin{document}
\maketitle

{\bf Abstract.} This paper addresses the study of novel constructions of variational analysis and generalized differentiation that are appropriate for characterizing robust stability properties of constrained set-valued mappings/multifunctions between Banach spaces important in optimization theory and its applications. Our tools of generalized differentiation revolves around the newly introduced concept of $\varepsilon$-regular normal cone to sets and associated coderivative notions for set-valued mappings. Based on these constructions, we establish several characterizations of the central stability notion known as the relative Lipschitz-like property of set-valued mappings in infinite dimensions. Applying a new version of the constrained extremal principle of variational analysis, we develop comprehensive sum and chain rules for our major constructions of conic contingent coderivatives for multifunctions between appropriate classes of Banach spaces.\\[0.5ex]
{\bf Keywords:} variational analysis, generalized differentiation, well-posedness, relative Lipschitzian stability, constrained systems, normal cones and coderivatives, calculus rules\\[0.5ex]
{\bf Mathematics Subject Classifications (2010)}: 49J53, 49J52, 49K40\vspace*{-0.1in}

\section{Introduction}\label{sec:intro}

This paper is devoted to developing adequate machinery of variational analysis and generalized differentiation to study {\it robust well-posedness} properties of set-valued mappings {\it subject to constraints}. The importance of such properties and their applications to optimization, control, and related areas has been highly recognized over the years, while the main attention was paid to unconstrained mappings in finite and infinite dimensions; see, e.g., the books \cite{Bonnans2000,Borwein2005,Ioffe,Mordukhovich2006,m18,Rockafellar1998,Thibault} with the extensive bibliographies therein. The study and applications of such properties for set-valued mappings {\it relative} to given sets has begun quite recently in \cite{Yang2021,Mordukhovich2023,Yao2023}, where the challenges in the study of the constrained properties in comparison with their unconstrained counterparts have been well realized. 

In this paper, we focus on investigating the {\it relative Lipschitz-like property} of set-valued mappings between Banach spaces, which in fact is equivalent to the {\it relative metric regularity} and {\it relative linear openness} of the inverses; see \cite{Mordukhovich2023}. We address here infinite-dimensional settings of Banach spaces, although the major results obtained below are new even in finite dimensions. Besides mathematical reasons, the main motivation for our research comes from the fact that infinite-dimensional problems naturally arise in many areas of applied mathematics and practical modeling such as dynamic optimization and equilibria, stochastic programming, finance, systems control, economics etc.

To accomplish our goals, we introduce a novel type of normals to sets labeled as the {\it regular $\varepsilon$-normal cone}. It is essentially different from the ``collection of $\varepsilon$-normals" (which is not a cone) overwhelmingly used in infinite-dimensional variational analysis; see \cite{Mordukhovich2006}. Based on this new normal cone and invoking also the {\it contingent cone} to constraint sets, we define various coderivative constructions for set-valued mappings and show that these newly introduced coderivatives lead us to {\it complete characterizations} (of the neighborhood and pointbased types) of the relative Lipschitz-like property of constrained multifunctions with the precise calculations of the {\it exact Lipschitzian bound}.  

The efficient usage of the established stability criteria in the case of structured multifunctions, which appear in constrained problems and applications, requires well-developed {\it calculus rules}. We derive here comprehensive {\it pointbased chain} and {\it sum rules} for {\it conic contingent coderivatives}, the limiting constructions from the obtained characterizations of the relative Lipschitz-like property for general multifunctions. Our proofs are mainly based on the fundamental {\it extremal principle} of variational analysis and its new {\it relative} version needed for the study of constrained set systems.\vspace*{0.02in}

The rest of the paper is structured as follows. In Section~\ref{sec:pre}, we introduce the $\varepsilon$-regular normal cone and recall the needed preliminaries from variational analysis and generalized differentiation. Sections~\ref{sec:chara} and \ref{sec:point} are devoted,  respectively, to establishing neighborhood and pointbased necessary conditions, sufficient conditions, and characterizations of the relative Lipschitz-like property. In Sections~\ref{sec:chain} and \ref{sec:sum}, we derive pointbased chain and sum rules, respectively, for the newly introduced conic contingent coderivatives with establishing the novel relative extremal principle and relative fuzzy intersection rule for set systems in Banach spaces. The concluding Section~\ref{sec:conc} summarizes the main achievements of the paper and discusses some directions of the future research. \vspace*{-0.1in}

\section{ $\varepsilon$-Regular Normal Cone and Preliminaries}\label{sec:pre}

First we recall some standard {\it notations} used in this paper. Given a Banach space $X$ (which is assumed throughout below unless otherwise stated) and its topological dual $X^*$, the norms of $X$ and $X^*$ are denoted by $\|\cdot\|_X$ and $\|\cdot\|_{X^*}$, respectively. Denote the closed unit balls of $X$ and $X^*$ by $B_X$ and $B_{X^*}$, respectively, and use
$\mathbb S:=\{x\in X~|~\|x\|_X=1\}$ for the unit sphere. For $x\in X$ and $x^*\in X^*$, the symbol $\langle x^*,x\rangle$ indicates the canonical pairing between $X$ and $X^*$. We use the notation $``x_k\to x"$ for the strong convergence of sequences, while $``x_k\rightharpoonup x"$ denotes the weak convergence, i.e., 
$\langle x^*,x_k\rangle\to\langle x^*,x\rangle$ for all $x^*\in X^*$. The notation $``x_k^*\rightharpoonup^* x^*"$ stands for the weak$^*$ convergence, i.e., $\langle x_k^*,x\rangle\to\langle x^*,x\rangle$ for any $x\in X$. The symbol $``x_k\stackrel{\Omega}{\longrightarrow}x"$ tells us that $x_k\to x$ with $x_k\in \Omega$ as $k\to\infty$. The collection of natural numbers is denoted by $\N:=\{1,2,\ldots\}$.

Recall that a multifunction $F\colon X\tto Y$ between Banach spaces is {\it positively homogeneous} if $0\in F(0)$ and $F(\lm x)=\lm F(x)$ for all $x\in X$ and $\lm>0$. The {\it norm} of a positively homogeneous multifunction is defined by
\begin{equation}\label{norm}
\|F\|:=\sup\big\{\|y\|\;\big|\;y\in F(x),\;\|x\|\le 1\big\}.
\end{equation}

Given a set $\Omega\subset X$ with $\bar x\in\Omega$, the symbol $T(\bar x;\Omega)$ stands for the {\it tangent/contingent cone} to $\Omega$ at $\bar x$. That is, $w\in T(\bar x;\Omega)$ if there exist sequences $t_k\downarrow 0$ and $\{w_k\}\subset X$ such that $w_k\to w$ as $k\to\infty$ and $\bar x+t_kw_k\in\Omega$ whenever $k\in\N$. Fixing any $\varepsilon\ge 0$, we now introduce our major new notion of the {\it $\varepsilon$-regular normal cone} to $\Omega$ at $\bar x\in\Omega$ by
\begin{equation}\label{e-cone}
\widehat N_\varepsilon^c(\bar x;\Omega):=\Big\{x^*\in X^*~\Big|~\limsup\limits_{x\stackrel{\Omega}{\longrightarrow}\bar x}\frac{\langle x^*,x-\bar x\rangle}{\|x-\bar x\|_X}\leq\varepsilon\|x^*\|_{X^*}\Big\},
\end{equation}
where ``$c$" stands for ``cone". Observe that $\widehat N_\varepsilon^c(\bar x;\Omega)$ is indeed a cone for any $\varepsilon\ge 0$, in contrast to the {\it $\varepsilon$-normal set} to $\Omega$ at $\bar x$ defined by
\begin{equation}\label{e-set}
\widehat N_\varepsilon(\bar x;\Omega):=\Big\{x^*\in X^*~\Big|~\limsup\limits_{x\stackrel{\Omega}{\longrightarrow}\bar x}\frac{\langle x^*,x-\bar x\rangle}{\|x-\bar x\|_X}\leq\varepsilon\Big\},
\end{equation}
which is broadly used in variational analysis; see \cite{Mordukhovich2006}. When $\varepsilon=0$, both sets in \eqref{e-cone} and \eqref{e-set} reduce to the {\it regular normal cone} to $\Omega$ at $\bar x$ denoted by $\widehat N(\bar x;\Omega)$. It is well known that the latter notion is associated with the {\it regular subdifferential} of an extended-real-valued function $\ph\colon X\to\oR:=(-\infty,\infty]$ at $\ox$ with $\ph(\ox)<\infty$ given by
\begin{equation}\label{sub}
\widehat\partial\varphi(\bar x):=\Big\{x^*\in X^*~\Big|~\limsup\limits_{x\to\bar x}\frac{\varphi(x)-\varphi(\bar x)-\langle x^*,x-\bar x\rangle}{\|x-\bar x\|_X}\ge 0\Big\}.
\end{equation}
Consider further a multifunction $S:X\rightrightarrows Y$ between Banach spaces with the graph
\begin{equation*}
\gph S:=\big\{(x,y)\in X\times Y\;\big|\;y\in S(x)\big\}
\end{equation*}
and denote by ${\cal N}(z)$ the collection of (open) neighborhoods around a point $z$.

\begin{definition}\label{lip-like}
A mapping $S:X\rightrightarrows Y$ has the {\sc Lipschitz-like property relative to} $\Omega$ around $(\bar x,\bar y)\in\gph S$ with $\bar x\in\Omega$ if 
there are neighborhoods $V\in\mathcal{N}(\bar x)$ and $W\in\mathcal{N}(\bar y)$ together with a constant $\kappa\geq0$ such that
$$
S(x')\cap W\subset S(x)+\kappa\|x'-x\|_X B_X\;\mbox{ for all }\;x,x'\in \Omega \cap V.
$$
The {\sc exact Lipschitzian bound} of $S$ {\sc relative to} $\Omega$ at $(\bar x,\bar y)$ is defined by
$$
\begin{aligned}
\operatorname{lip}_\Omega S(\bar x,\bar y):=\inf\big\{\kappa\geq0&~\big|~\exists V\in\mathcal{N}(\bar x),W\in\mathcal{N}(\bar y)~\operatorname{such~that}\\
&S(x')\cap W\subset S(x)+\kappa\|x'-x\|_X B_X\;\mbox{ for all }\;x,x'\in \Omega\cap V\big\}.
\end{aligned}
$$
\end{definition}

The {\it duality mapping} $J\colon X\tto X^*$ between a Banach space $X$ and its dual is given by
\begin{equation}\label{dual map}
J(x):=\big\{x^*\in X^*~\big|~\langle x^*,x\rangle=\|x\|_X^2,~\|x^*\|_{X^*}=\|x\|_{X}\big\}\;\mbox{ for all }\;x\in X.
\end{equation}
We clearly have $J(\lambda x)=\lambda J(x)$ whenever $\lambda\in\mathbb R$ and get by \cite[Proposition~2.12]{Bonnans2000} that $J(x)\not=\emptyset$ for all $x\in X$. It is well known from geometric theory of Banach spaces (see, e.g., \cite{fabian}) that any reflexive Banach space $X$ admits a renorming $\|\cdot\|$ such that both $X$ and $X^*$ are locally uniformly convex and $\|\cdot\|$ is Fr\'echet differentiable at each nonzero point of $X$. Unless otherwise stated, in what follows we always consider {\it reflexive Banach spaces} with such a renorming. Recall from \cite[Proposition~8]{Browder1983} that the duality mapping \eqref{dual map} in reflexive spaces is single-valued and bicontinuous.\vspace*{0.03in}

\begin{proposition}\label{JT}
Let $X$ be a reflexive Banach space, let $\Omega\subset X$ be a closed and convex set, and let $x^*\in J(T(\bar x;\Omega))$. Then for any $\varepsilon>0$, there exists $\delta>0$ such that we have
$$
x^*\in J\big(T(x;\Omega)\big)+\varepsilon B_{X^*}\;\mbox{ whenever }\;x\in\Omega\cap(\bar x+\delta B_X).
$$
\end{proposition}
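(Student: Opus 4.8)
The plan is to argue by contradiction using the sequential characterization of the contingent cone together with the bicontinuity of the duality mapping $J$ in the renormed reflexive space. Suppose the conclusion fails: then there exist $\varepsilon>0$ and a sequence $x_k\stackrel{\Omega}{\longrightarrow}\bar x$ such that
\[
x^*\notin J\big(T(x_k;\Omega)\big)+\varepsilon B_{X^*}\quad\mbox{for all }k\in\N.
\]
Write $w:=J^{-1}(x^*)\in T(\bar x;\Omega)$, which is well defined since $J$ is single-valued and bijective in our setting. By definition of the contingent cone at $\bar x$, there are $t_j\downarrow 0$ and $w_j\to w$ with $\bar x+t_jw_j\in\Omega$ for all $j$. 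The first key step is to produce, for each large $k$, a tangent vector $v_k\in T(x_k;\Omega)$ with $v_k\to w$. The natural candidate is to exploit convexity: since $\Omega$ is closed and convex, $T(x;\Omega)=\cl\{\lambda(u-x)\mid u\in\Omega,\ \lambda\ge 0\}$ for every $x\in\Omega$, so for the fixed near points $\bar x+t_jw_j\in\Omega$ we have $(\bar x+t_jw_j)-x_k\in\Omega-x_k$, hence $\tfrac{1}{t_j}\big((\bar x+t_jw_j)-x_k\big)=w_j+\tfrac{1}{t_j}(\bar x-x_k)\in T(x_k;\Omega)$ up to the nonnegative scaling. Choosing $j=j(k)$ so that $\tfrac{1}{t_{j(k)}}\|\bar x-x_k\|_X\to 0$ (possible since $x_k\to\bar x$, by taking $t_{j(k)}$ decaying slowly relative to $\|\bar x-x_k\|_X$) and simultaneously $w_{j(k)}\to w$, we obtain $v_k:=w_{j(k)}+\tfrac{1}{t_{j(k)}}(\bar x-x_k)\in T(x_k;\Omega)$ with $v_k\to w$ in norm.

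The second key step is to pass this norm convergence through $J$. Since $J$ is continuous (indeed bicontinuous) by \cite[Proposition~8]{Browder1983} in the reflexive renormed space, and $v_k\to w$, we get $J(v_k)\to J(w)=x^*$ in $X^*$. Hence for all large $k$ we have $\|J(v_k)-x^*\|_{X^*}<\varepsilon$, i.e. $x^*\in J(v_k)+\varepsilon B_{X^*}\subset J\big(T(x_k;\Omega)\big)+\varepsilon B_{X^*}$, contradicting the choice of the sequence $\{x_k\}$. This yields the desired $\delta$.

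The main obstacle is the first step: carefully coordinating the two limiting processes so that the ``correction term'' $\tfrac{1}{t_{j(k)}}(\bar x-x_k)$ vanishes while retaining $w_{j(k)}\to w$ — this requires a diagonal-type selection of indices, and one must verify that the resulting vector genuinely lies in $T(x_k;\Omega)$, for which the convexity of $\Omega$ (giving the explicit cone-of-feasible-directions description, with closure) is exactly what makes the argument clean rather than needing a fresh sequential construction at each $x_k$. A minor point to check is that $J(w)=x^*$ really holds, i.e. that $J$ restricted to $T(\bar x;\Omega)$ inverts the hypothesis $x^*\in J(T(\bar x;\Omega))$ in the single-valued sense; this is immediate from single-valuedness of $J$ together with $x^*=J(w)$ for some $w\in T(\bar x;\Omega)$.
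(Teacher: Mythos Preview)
Your proof is correct and follows essentially the same route as the paper: argue by contradiction, set $w:=J^{-1}(x^*)\in T(\bar x;\Omega)$, produce $v_k\in T(x_k;\Omega)$ with $v_k\to w$, and conclude via the continuity of $J$. The only difference is that the paper obtains the sequence $v_k$ by invoking the known inner semicontinuity $T(\bar x;\Omega)\subset s\mbox{-}\liminf_{x\stackrel{\Omega}{\to}\bar x}T(x;\Omega)$ for closed convex $\Omega$, whereas you reprove that fact in place via the radial-cone description and a diagonal selection; the two arguments are otherwise identical.
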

\begin{proof}
Suppose on the contrary that the claimed inclusion fails and then find a number $\varepsilon_0>0$ and a sequence $x_k\stackrel{\Omega}{\longrightarrow}\bar x$ as $k\to\infty$ such that
\begin{equation}\label{notin}
x^*\notin J\big(T(x_k;\Omega)\big)+\varepsilon_0 B_{X^*}\;\mbox{ for all }\;k\in\mathbb N.
\end{equation}
By the aforementioned properties of the duality mapping, we deduce from $J^{-1}(x^*)\in T(\bar x;\Omega)$ and the well-known relationship
$$
T(\bar x;\Omega)\subset s\mbox{-}\liminf\limits_{x\stackrel{\Omega}{\longrightarrow}\bar x}T(x;\Omega),
$$
valid for convex sets $\Omega$, that there exists a sequence of $z_k\in T(x_k;\Omega)$ such that $z_k\to J^{-1}(x^*)$ for all $k\in\mathbb N$. Letting $x_k^*:=J(z_k)$, we get that
$$
x_k^*\in J\big(T(x_k;\Omega)\big)~~\operatorname{and}~~x_k^*\to x^*\;\mbox{ as }\;k\to\infty,
$$
which contradicts \eqref{notin} and completes the proof of the proposition.
\end{proof}
\vspace*{-0.15in}

\section{Neighborhood Conditions for Relative Lipschitz-like Multifunctions}\label{sec:chara}

The goal of this section is to derive necessary conditions, sufficient conditions, and complete characterizations of the Lipschitz-like property for set-valued mappings $S\colon X\to Y$ relative to sets $\O$ in terms of the {\it $\ve$-regular normal cone} \eqref{e-cone} to the {\it graph} of the constrained mapping $S|_\O$ with the usage of the {\it contingent cone} $T_\O(x)$ to the constraint set $\O$. According to our general scheme (cf.\ \cite{Mordukhovich2006,Mordukhovich2023}), such a construction can be viewed as the {\it $\ve$-coderivative} of constrained mapping in question; see Definition~\ref{coderivatives}{(i). Note the conditions derived in this section in the case of set-valued between reflexive Banach spaces are {\it neighborhood}, i.e., they involve points nearby the reference one.\vspace*{0.02in} 

First we present a neighborhood {\it necessary} condition for the Lipschitz-like property of multifunctions relative to {\it arbitrary} constraint sets.

\begin{lemma}\label{Aubin-TX0} Let $S:X\rightrightarrows Y$ be a set-valued mapping between reflexive Banach spaces $X$ and $Y$, let $\Omega\subset X$ be a nonempty set, and let $(\bar x,\bar y)\in\operatorname{gph} S|_\Omega$. If $S$ has the Lipschitz-like property relative to $\Omega$ around $(\bar x,\bar y)$ with constant $\kappa\ge 0$, then for any $\varepsilon\in(0,\frac{1}{\sqrt{\kappa^2+1}})$ there exists $\delta>0$ such that we have the condition
\begin{equation}\label{TX0=0}
\widehat N_\varepsilon^c\big((x,y);\gph S|_\Omega\big)\cap[J\big(T(x;\Omega)\big)\times\{0\}]=\{(0,0)\}
\end{equation}
whenever $(x,y)\in\gph S|_\Omega\cap[(\bar x+\delta B_X)\times(\bar y+\delta B_Y)]$.
\end{lemma}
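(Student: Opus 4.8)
The plan is to argue by contradiction, extracting from the failure of \eqref{TX0=0} a pair $(x^*,0)$ that is simultaneously a nonzero element of the $\varepsilon$-regular normal cone to $\gph S|_\Omega$ at some nearby point $(x,y)$ and of the form $x^*\in J(T(x;\Omega))$, and then using the Lipschitz-like inequality to force $\|x^*\|_{X^*}$ down to zero, contradicting $x^*\neq 0$. So suppose the conclusion fails: then there are sequences $\delta_k\downarrow 0$, points $(x_k,y_k)\in\gph S|_\Omega$ with $x_k\to\bar x$, $y_k\to\bar y$, and nonzero functionals $x_k^*\in\widehat N_\varepsilon^c((x_k,y_k);\gph S|_\Omega)\cap J(T(x_k;\Omega))$, where I write the graph-space functional as $(x_k^*,0)$. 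By positive homogeneity of the cone I may normalize so that $\|x_k^*\|_{X^*}=1$, hence (by the defining property of $J$) the tangent vector $w_k:=J^{-1}(x_k^*)\in T(x_k;\Omega)$ satisfies $\|w_k\|_X=1$ as well.

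Next I would unpack the two memberships. From $w_k\in T(x_k;\Omega)$ there are $t_j\downarrow 0$ and $w_{k,j}\to w_k$ with $x_k+t_j w_{k,j}\in\Omega$; the Lipschitz-like property, applied with $x=x_k$, $x'=x_k+t_j w_{k,j}$ (both in $\Omega\cap V$ for $j$ large) and the fixed point $y_k\in S(x_k)\cap W$, yields points $y_{k,j}\in S(x_k+t_j w_{k,j})$ with $\|y_{k,j}-y_k\|_Y\le\kappa t_j\|w_{k,j}\|_X$. Thus the points $(x_k+t_j w_{k,j},\,y_{k,j})\in\gph S|_\Omega$ approach $(x_k,y_k)$ along a direction whose normalized limit $(w_k,v)$ has $\|w_k\|_X=1$ and $\|v\|_Y\le\kappa$; feeding such a direction into the $\limsup$ defining $\widehat N_\varepsilon^c$ for the functional $(x_k^*,0)$ gives
$$
\frac{\langle x_k^*,t_j w_{k,j}\rangle + \langle 0,y_{k,j}-y_k\rangle}{\|(t_j w_{k,j},\,y_{k,j}-y_k)\|_{X\times Y}}\le\varepsilon\|x_k^*\|_{X^*}+o(1),
$$
and passing to the limit in $j$ the numerator tends to $\langle x_k^*,w_k\rangle=\|x_k^*\|_{X^*}^2=1$ while the denominator is at most $t_j\sqrt{\|w_{k,j}\|_X^2+\kappa^2\|w_{k,j}\|_X^2}\to\sqrt{\kappa^2+1}$ (using the product-norm convention; any equivalent reflexive renorming only changes constants and can be absorbed, which is the routine bookkeeping I would not grind through here). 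This forces $\tfrac{1}{\sqrt{\kappa^2+1}}\le\varepsilon$, contradicting the hypothesis $\varepsilon<\tfrac{1}{\sqrt{\kappa^2+1}}$.

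The main obstacle — and the step that needs care rather than cleverness — is the bookkeeping around which norm on $X\times Y$ is in force and how the renorming interacts with the constant $\sqrt{\kappa^2+1}$: the clean inequality $\tfrac{1}{\sqrt{\kappa^2+1}}\le\varepsilon$ comes out only if the product norm is (equivalent to) the Euclidean combination of $\|\cdot\|_X$ and $\|\cdot\|_Y$, and one must check that the locally uniformly convex Fréchet-differentiable renorming of the reflexive space $X\times Y$ can be taken of this form, or else track the equivalence constants through the estimate. A secondary point is legitimacy of the diagonal/sequential extraction: since all the estimates above are uniform in $k$ (the constants $\kappa$, $\varepsilon$ and the neighborhoods $V$, $W$ are fixed once and for all), I do not actually need a diagonal argument — for each fixed large $k$ the inequality already yields the contradiction, so the sequence in $k$ is a convenience for locating a point inside the shrinking balls $(\bar x+\delta_k B_X)\times(\bar y+\delta_k B_Y)$ rather than a genuine limiting device. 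This is why the conclusion is a genuine neighborhood statement with a single $\delta$ depending only on $\kappa$ and $\varepsilon$.
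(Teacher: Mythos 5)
Your proposal is correct and takes essentially the same route as the paper: the paper argues directly (fix a nearby $(x,y)$ and $x^*\in T(x;\Omega)\setminus\{0\}$, build points $(x+t_kx_k^*,y_k)\in\gph S|_\Omega$ via the tangent-cone approximation and the Lipschitz-like inclusion, and show the quotient defining $\widehat N_\varepsilon^c$ exceeds $\varepsilon$ for $(J(x^*),0)$), whereas you phrase the same computation as a proof by contradiction starting from a normalized $x_k^*\in J(T(x_k;\Omega))$ and pulling back to $w_k=J^{-1}(x_k^*)$ — the two directions of $J$ are trivially interchangeable. Your worry about the product-norm convention is moot: the paper explicitly uses $\|(u,v)\|_{X\times Y}=\sqrt{\|u\|_X^2+\|v\|_Y^2}$, so the clean constant $\sqrt{\kappa^2+1}$ falls out exactly as you compute.
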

\begin{proof} Having the imposed relative Lipschitz-like property gives us $\delta>0$ with
\begin{equation}\label{(b)}
S(x)\cap(\bar y+2\delta B_Y)\subset S(x')+\kappa\|x-x'\|_X B_X\;\mbox{ for all }\;x,x'\in \Omega\cap(\bar x+2\delta B_X).
\end{equation}
Picking $\varepsilon\in(0,\frac{1}{\sqrt{\kappa^2+1}})$, fix $(x,y)\in\text{gph}~S|_\Omega\cap[(\bar x+\delta B_X)\times(\bar y+\delta B_Y)]$ and $x^*\in T(x;\Omega)\setminus\{0\}$. By definition of the contingent cone, there exist $t_k\dn 0$ and $x_k^*\to x^*$ as $k\to\infty$ such that
\begin{equation*}
x_k:=x+t_kx_k^*\in \Omega\;\mbox{ for all }\;k\in\N,
\end{equation*}
which implies the existence of $k_0\in\N$ ensuring the estimates
\begin{equation*}
t_k<\frac{\delta}{2\|x^*\|_{X^*}} ~~\text{and}~~\|x_k^*-x^*\|_{X^*}<\frac{\|x^*\|_{X^*}}{4}\;\mbox{for all }\;k>k_0.
\end{equation*}
This clearly leads us to the inequalities
$$
\begin{aligned}
\|x_k-\bar x\|_X&\leq\|x_k-x\|_X+\|x-\bar x\|_X<\frac{\delta}{2\|x^*\|_{X^*}}\|x_k^*\|_{X^*}+\delta\\
&\leq\frac{\delta}{2\|x^*\|_{X^*}}(\|x^*\|_{X^*}+\|x^*-x_k^*\|_{X^*})
+\delta<2\delta.
\end{aligned}
$$
It follows from \eqref{(b)} that for all natural numbers $k>k_0$ we get
$$
y\in S(x)\cap(\bar y+2\delta B_Y)\subset S(x_k)+\kappa\|x-x_k\|_X.
$$
Therefore, there exist sequences $y_k\in S(x_k)$ and $\kappa_k\dn\kappa$ such that
$$
\|y_k-y\|_Y\leq\kappa_k\|x_k-x\|_X,
$$
which implies in turn that 
$$
\|(x_k,y_k)-(x,y)\|_{X\times Y}=\sqrt{\|x_k-x\|_X^2+\|y_k-y\|_Y^2}\leq\sqrt{\kappa_k^2+1}\|x_k-x\|_X.
$$
Furthermore, the convergence $x_k^*\to x^*$ as $k\to\infty$ allows us to find $k_1>k_0$ such that
$$
\langle J(x^*),x_k^*\rangle\geq\frac{1}{2}\|x^*\|^2>0\;\mbox{ for all }\;k>k_1.
$$
Thus for all $k>k_1$ we have
$$
\begin{aligned}
&\frac{\langle (J(x^*),0),(x_k,y_k)-(x,y)\rangle}{\|x^*\|_{X^*}\|(x_k,y_k)-(x,y)\|_{X\times Y}}
\geq\frac{t_k\langle J(x^*),x_k^*\rangle}{\sqrt{\kappa_k^2+1}\|x^*\|_{X^*}\|x_k-x\|_X}\\
=&\frac{1}{\sqrt{\kappa_k^2+1}}\frac{\langle J(x^*),x_k^*\rangle}{\|x^*\|_{X^*}\|x_k^*\|_X}
\to\frac{1}{\sqrt{\kappa^2+1}}>\varepsilon~~\text{as}~k
\to\infty,
\end{aligned}
$$
which yields $(J(x^*),0)\notin \widehat N_\varepsilon^c\big((x,u);\operatorname{gph} S|_\Omega\big)$. This verifies \eqref{TX0=0}  due to the arbitrary choice of
$(x,y)\in\operatorname{gph}~S|_\Omega\cap[(\bar x+\delta B_X)\times(\bar y+\delta B_Y)]$ and $x^*\in T(x;\Omega)\setminus\{0\}$.
\end{proof}

The next lemma provides a refinement of \eqref{TX0=0}, which is valid in the same setting for $S$ and $\O$ as in Lemma~{\rm\ref{Aubin-TX0}}, while being useful to establish the neighborhood sufficient condition for the relative Lipschitz-like property in Lemma~\ref{Sufficiency} below.

\begin{lemma}\label{Necessity} Let $S$, $\O$, and $(\ox,\oy)$ be the same as in Lemma~{\rm\ref{Aubin-TX0}}. If there exist numbers $\varepsilon\in(0,1)$ and $\delta>0$ such that \eqref{TX0=0} holds, then whenever $\kappa>\sqrt{\frac{1}{\varepsilon^2}-1}$ we find $\gamma\in(0,\varepsilon)$ ensuring the fulfillment of the estimate
\begin{equation}\label{nece1}
\|x^*\|\leq\kappa \|y^*\|\;\mbox{ when }\;(x^*,y^*)\in \widehat N_\gamma^c\big((x,y);\gph S|_\Omega\big)\cap[J\big(T(x;\Omega)\big)\times Y^*]
\end{equation}
for all $(x,y)\in\operatorname{gph}S|_\Omega\cap[(\bar x+\delta B_X)\times(\bar y+\delta B_Y)]$.
\end{lemma}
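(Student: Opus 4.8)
The plan is to argue by contradiction, negating the desired estimate \eqref{nece1} along a sequence and extracting a limiting element that violates \eqref{TX0=0}. Suppose the assertion fails for some $\kappa>\sqrt{1/\varepsilon^2-1}$. Then, choosing $\gamma_j\uparrow\varepsilon$ (say $\gamma_j=\varepsilon(1-1/j)$), for each $j$ we can find $(x_j,y_j)\in\gph S|_\Omega\cap[(\ox+\delta B_X)\times(\oy+\delta B_Y)]$ and a pair $(x_j^*,y_j^*)\in\widehat N_{\gamma_j}^c((x_j,y_j);\gph S|_\Omega)$ with $x_j^*\in J(T(x_j;\Omega))$ but $\|x_j^*\|>\kappa\|y_j^*\|$. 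Since the $\widehat N_\gamma^c$-sets are cones, I would normalize $\|(x_j^*,y_j^*)\|_{X^*\times Y^*}=1$, so that $\|y_j^*\|$ is small and $\|x_j^*\|$ is bounded below — precisely, $\|x_j^*\|^2>\kappa^2/(\kappa^2+1)$, hence $\|x_j^*\|\ge\kappa/\sqrt{\kappa^2+1}=:c_0>\varepsilon$ (this last inequality is exactly the hypothesis on $\kappa$). Note also $x_j:=J^{-1}(x_j^*)\in T(x_j;\Omega)$.

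The key compactness step uses reflexivity of $X$ and $Y$: the bounded sequences $(x_j)$, $(y_j)$ have weakly convergent subsequences $x_j\rightharpoonup\ox_0$, $y_j\rightharpoonup\oy_0$ with $(\ox_0,\oy_0)\in(\ox+\delta B_X)\times(\oy+\delta B_Y)$ — here one must check that $\gph S|_\Omega$ behaves well, but in fact \eqref{nece1} only asks for the conclusion at such points and we merely need a point in $\Omega\times Y$ inside the closed ball, so weak closedness of the closed ball suffices and I need not assume $\gph S|_\Omega$ is weakly closed; rather I keep working with the actual points $(x_j,y_j)$. Similarly the normalized duals satisfy $x_j^*\rightharpoonup^* x_0^*$ and $y_j^*\rightharpoonup^* y_0^*$. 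The delicate part is to pass the membership relations to the limit. For the primal side I would invoke Proposition~\ref{JT}: since $x_j^*\in J(T(x_j;\Omega))$ and eventually $x_j$ lies near a cluster point, Proposition~\ref{JT} gives, for each fixed small $\eta>0$, that for large $j$ the element $x_0^*$ (or $x_j^*$) is within $\eta B_{X^*}$ of $J(T(x_j;\Omega))$, which after a diagonal argument pins the limit into $J(T(\cdot;\Omega))$ at a suitable reference point. For the $\widehat N_\gamma^c$-membership I would use that $\gamma_j\to\varepsilon$ together with the definition \eqref{e-cone}: each inequality $\limsup_{z\to(x_j,y_j)}\langle(x_j^*,y_j^*),z-(x_j,y_j)\rangle/\|z-(x_j,y_j)\|\le\gamma_j\|(x_j^*,y_j^*)\|$ should be combined, via a standard ``stability of $\varepsilon$-normals'' passage to the limit (strong convergence of base points being needed here), to produce $(x_0^*,y_0^*)\in\widehat N_{\varepsilon'}^c((\ox_0,\oy_0);\gph S|_\Omega)$ for some $\varepsilon'$ slightly above $\varepsilon$ — or, more carefully, I would not pass to the limit in the set but instead feed the strict inequality $c_0>\varepsilon$ back directly: since $\|x_j^*\|\ge c_0>\varepsilon\ge\gamma_j$ and $\|(x_j^*,y_j^*)\|=1$, we would contradict \eqref{TX0=0} at the point $(x_j,y_j)$ itself if $y_j^*$ could be taken to be $0$.

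This suggests the cleaner route, which I expect is the intended one: avoid weak limits of the whole configuration and instead argue that if no such $\gamma$ worked, then taking $\gamma\uparrow\varepsilon$ forces $\|y_j^*\|\to 0$ while $\|x_j^*\|$ stays bounded away from zero; then I would perturb $(x_j^*,y_j^*)$ to $(x_j^*,0)$ and show this still lies in $\widehat N_{\varepsilon}^c((x_j,y_j);\gph S|_\Omega)$ for $j$ large, because dropping a vanishingly small $y^*$-component changes the ratio in \eqref{e-cone} by a controllable amount and $\gamma_j<\varepsilon$ leaves room. Since $x_j^*\in J(T(x_j;\Omega))$ and $x_j^*\ne 0$, this produces a nonzero element of $\widehat N_\varepsilon^c((x_j,y_j);\gph S|_\Omega)\cap[J(T(x_j;\Omega))\times\{0\}]$ at a point in $(\ox+\delta B_X)\times(\oy+\delta B_Y)$, directly contradicting \eqref{TX0=0}. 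The main obstacle is making the perturbation $y_j^*\rightsquigarrow 0$ rigorous — i.e., quantifying how the $\limsup$ in \eqref{e-cone} degrades when one component of the functional is removed, using $\|y_j^*\|\to 0$ and the slack $\varepsilon-\gamma_j$ — and correctly bookkeeping the normalizations so that the threshold $\kappa>\sqrt{1/\varepsilon^2-1}$ is exactly what is consumed. Everything else is routine manipulation of the definitions.
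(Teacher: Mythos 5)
Your second ``cleaner route'' has the right endpoint --- show that the hypothesized violating pair produces $(x^*,0)\in\widehat N_\varepsilon^c$ at the same base point, contradicting \eqref{TX0=0} --- but the argument you sketch does not reach it, and the sequential preamble is a detour the paper simply does not need.

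Two concrete problems. First, the claim that ``taking $\gamma\uparrow\varepsilon$ forces $\|y_j^*\|\to 0$'' is false: after normalizing $\|(x_j^*,y_j^*)\|=1$, the constraint $\|x_j^*\|>\kappa\|y_j^*\|$ yields only the \emph{fixed} bound $\|y_j^*\|<1/\sqrt{\kappa^2+1}$, with no improvement as $j\to\infty$ (indeed, enlarging $\gamma_j$ enlarges the set of candidate pairs, so there is less pressure on $y_j^*$, not more). Second, and more fundamentally, the direct perturbation --- subtract $(0,y^*)$ from $(x^*,y^*)$ and absorb the error --- does not close with the available slack. Estimating crudely,
$$
\limsup\frac{\langle(x^*,0),\cdot\rangle}{\|\cdot\|}\le\gamma\|(x^*,y^*)\|+\|y^*\|,
$$
and to bound this by $\varepsilon\|x^*\|$ in the worst case ($\|y^*\|$ near $1/\sqrt{\kappa^2+1}$, $\|x^*\|$ near $\kappa/\sqrt{\kappa^2+1}$) you would need $\varepsilon\kappa>1$, which is strictly stronger than the hypothesis $\kappa>\sqrt{1/\varepsilon^2-1}$. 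The missing idea is the paper's decomposition
$$
(x^*,0)=\underbrace{\tfrac{\kappa^2}{\kappa^2+1}(x^*,y^*)}_{\in\;\widehat N_\gamma^c\ \text{(cone!)}}+\ (x_T^*,y_T^*),\qquad \|(x_T^*,y_T^*)\|\le\tfrac{1}{\sqrt{\kappa^2+1}}\|x^*\|,
$$
which keeps a \emph{large} positive multiple of $(x^*,y^*)$ inside the cone $\widehat N_\gamma^c$ (this is exactly where the cone structure of \eqref{e-cone} pays off over the $\varepsilon$-normal \emph{set} \eqref{e-set}), and leaves a remainder whose ratio contribution is exactly $1/\sqrt{\kappa^2+1}$. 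With $\gamma:=\varepsilon-1/\sqrt{\kappa^2+1}\in(0,\varepsilon)$ the two contributions sum to $\varepsilon$ on the nose, and the contradiction with \eqref{TX0=0} follows at the single point $(x,y)$ with no sequences, no limits, and no reflexivity.
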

\begin{proof}
Set $\gamma:=\varepsilon-\frac{1}{\sqrt{\kappa^2+1}}$ and verify that \eqref{nece1} holds. Suppose on the contrary that there exist $(x,y)\in\operatorname{gph} S|_\Omega\cap[(\bar x+\delta B_X)\times(\bar y+\delta B_Y)]$ and $(x^*,y^*)\in \widehat N_{\gamma}^c\big((x,y);\operatorname{gph} S|_\Omega\big)\cap[J\big(T(x;\Omega)\big)\times Y^*]$ such that
\begin{equation*}
\|x^*\|_{X^*}>\kappa\|y^*\|_{Y^*}.
\end{equation*}
Considering further the pairs
$$
(x_N^*,y_N^*):=\frac{\kappa^2}{\kappa^2+1}(x^*,y^*)~~~\text{and}~~~
(x_T^*,y_T^*):=(x^*,0)-(x_N^*,y_N^*),
$$
we easily check the relationships
\begin{equation}\label{d4-2}
\begin{aligned}
\|(x_T^*,y_T^*)\|_{X^*\times Y^*}=&\left\|(x^*,0)-\frac{\kappa^2}{\kappa^2+1}(x^*,y^*)\right\|_{X^*\times Y^*}\\
=&\sqrt{\frac{1}{(\kappa^2+1)^2}\|x^*\|_{X^*}^2+\frac{\kappa^4}{(\kappa^2+1)^2}\|y^*\|_{Y^*}^2}\\
\leq&\frac{1}{\sqrt{\kappa^2+1}}\|x^*\|_{X^*},
\end{aligned}
\end{equation}
\begin{equation}\label{d4-1}
\|(x_N^*,y_N^*)\|_{X^*\times Y^*}\leq\|x^*\|_{X^*}.
\end{equation}
Then we deduce from \eqref{d4-2} the inequalities
$$
\begin{aligned}
\langle (x_T^*,y_T^*),(x',y')-(x,y)\rangle\leq&\|(x_T^*,y_T^*)\|_{X^*\times Y^*}\|(x',y')-(x,y)\|_{X\times Y}\\
\leq&\frac{1}{\sqrt{\kappa^2+1}}\|x^*\|_{X^*}\|(x',y')-(x,y)\|_{X\times Y},
\end{aligned}
$$
which, being combined with \eqref{d4-1} and  $(x_N^*,y_N^*)\in \widehat N_{\gamma}^c\big((x,u);\operatorname{gph} S|_\Omega\big)$, lead us to
$$
\begin{aligned}
&\limsup\limits_{(x',y')\stackrel{\text{gph}~S|_\Omega}{\longrightarrow}(x,y)}
\frac{\langle(x^*,0),(x',y')-(x,y)\rangle}{\|(x',y')-(x,y)\|_{X\times Y}}\\
=&\limsup\limits_{(x',y')\stackrel{\operatorname{gph} S|_\Omega}{\longrightarrow}(x,y)}
\frac{\langle(x_N^*,y_N^*)+(x_T^*,y_T^*),(x',y')-(x,y)\rangle}{\|(x',y')-(x,y)\|_{X\times Y}}\\
\leq&\limsup\limits_{(x',y')\stackrel{\operatorname{gph} S|_\Omega}{\longrightarrow}(x,y)}
\frac{\langle(x_N^*,y_N^*),(x',y')-(x,y)\rangle}{\|(x',y')-(x,y)\|_{X\times Y}}
+\frac{1}{\sqrt{\kappa^2+1}}\|x^*\|_{X^*}\\
\leq&\Big(\gamma+\frac{1}{\sqrt{\kappa^2+1}}\Big)\|x^*\|_{X^*}=\varepsilon\|x^*\|_{X^*},
\end{aligned}
$$
where the last inequality follows from the definition of $\varepsilon$-normal cone. The latter yields $(x^*,0)\in \widehat N_{\varepsilon}^c\big((x,y);\operatorname{gph} S|_\Omega\big)$, which clearly contradicts \eqref{TX0=0} and hence verifies the claimed condition \eqref{nece1}.
\end{proof}

Having Lemma~\ref{Necessity} in hand and employing fundamental tools of variational analysis and generalized differentiation, we show next that the neighborhood condition \eqref{nece1} is actually {\it sufficient} for the relative Lipschitz-like property. Observe that, in contrast to the necessary conditions in Lemmas~\ref{Aubin-TX0} and \ref{Necessity}, we now require that the graph of $S$ is {\it locally closed}, and the set $\O$ is {\it closed} and {\it convex}.

\begin{lemma}\label{Sufficiency}
Let $S\colon X\tto Y$ be a set-valued mapping between reflexive Banach spaces, and let $\Omega\subset X$ be a closed and convex set. Fix $(\bar x,\bar y)\in\operatorname{gph} S|_\Omega$, and assume that $\operatorname{gph} S$ is locally closed around $(\bar x,\bar y)$. If there exist $\delta>0$, $\kappa\geq0$, and $\gamma\in(0,1)$ such that condition \eqref{nece1} holds for all $(x,y)\in\operatorname{gph}~S|_\Omega\cap[(\bar x+\delta B_X)\times(\bar y+\delta B_Y)]$, then $S$ has the Lipschitz-like property relative to $\Omega$ around $(\bar x,\bar y)$ with constant $\kappa$.
\end{lemma}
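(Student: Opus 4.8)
The plan is to argue by contradiction, following the classical scheme for deriving the Lipschitz-like (Aubin) property from a coderivative-type normal cone condition, but adapted to the relative/constrained setting and to the new $\varepsilon$-regular normal cone. Suppose $S$ does not have the Lipschitz-like property relative to $\Omega$ around $(\bar x,\bar y)$ with the given constant $\kappa$. Then, shrinking the radius as needed and using that $\gph S$ is locally closed around $(\bar x,\bar y)$, we can produce sequences $x_k,x_k'\in\Omega$ converging to $\bar x$ and $y_k'\in S(x_k')\cap(\bar y+\delta B_Y)$ converging to $\bar y$ such that $\dist(y_k';S(x_k))>\kappa'\|x_k'-x_k\|_X$ for some $\kappa'$ strictly between $\kappa$ and the threshold $\sqrt{1/\gamma^2-1}$ forced by \eqref{nece1}; here it is convenient to pass to a reformulation in terms of the lower semicontinuous function $(x,y)\mapsto\dist(y;S(x))+\delta_\Omega(x)$ (with $\delta_\Omega$ the indicator of $\Omega$), whose graph-related level sets encode the failure of the property.

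Next I would invoke a variational principle — Ekeland's principle, or more precisely its quantitative consequence tailored to metric regularity estimates — to replace the "bad" points by nearby points $(\tilde x_k,\tilde y_k)\in\gph S|_\Omega$ at which a suitable perturbed function attains a local minimum with a controlled perturbation slope. Applying the subdifferential sum rule / fuzzy calculus available in the reflexive (Asplund-type, in fact here renormed reflexive) setting to this minimality condition, one extracts $\varepsilon$-regular normals to $\gph S$ at points close to $(\tilde x_k,\tilde y_k)$, together with a normal to $\Omega$ at the corresponding $x$-component. The point of the renorming hypothesis and of Proposition~\ref{JT} is precisely to convert the $\Omega$-normal information into the form $x^*\in J(T(x;\Omega))$ up to an arbitrarily small error in $B_{X^*}$, so that the extracted pair $(x^*,y^*)$ lands in $\widehat N_\gamma^c\big((x,y);\gph S|_\Omega\big)\cap[J(T(x;\Omega))\times Y^*]$ for $(x,y)$ in the prescribed neighborhood.

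The contradiction is then obtained by a careful bookkeeping of norms: the variational-principle construction forces $\|y^*\|_{Y^*}$ to be small (of order $1/\kappa'$ relative to $\|x^*\|_{X^*}$, because the $y$-component of the "slope" is what drives the distance estimate), which by \eqref{nece1} gives $\|x^*\|_{X^*}\le\kappa\|y^*\|_{Y^*}$, contradicting the strict inequality $\kappa'>\kappa$ built into the choice of constants. Tracking the exact constants — reconciling the $\gamma$-parameter of the $\varepsilon$-regular normal cone, the Ekeland perturbation size, the error $\varepsilon B_{X^*}$ from Proposition~\ref{JT}, and the slack between $\kappa$ and $\sqrt{1/\gamma^2-1}$ — is the main obstacle; one must check that all these small quantities can be chosen compatibly so that the final pair genuinely satisfies the hypotheses of \eqref{nece1} rather than missing the neighborhood or the $\gamma$-threshold by an uncontrolled amount.

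The hard part, concretely, will be the passage from the failure of the relative inclusion to a clean minimization problem on $\gph S|_\Omega$ whose first-order conditions are expressible through $\widehat N_\gamma^c$: the constraint set $\Omega$ appears both inside $S$'s domain and as a tangential restriction, so the fuzzy intersection/sum rule has to be applied to the pair of sets $\gph S$ and $\Omega\times Y$ with the $J\circ T$ representation of the $\Omega$-normals, and one needs the reflexive renorming to guarantee that the relevant subdifferentials and duality mappings behave continuously. Once this localization is done, the remaining estimates are routine.
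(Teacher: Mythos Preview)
Your overall architecture --- contradiction, Ekeland's variational principle, fuzzy subdifferential calculus, then a norm contradiction against \eqref{nece1} --- matches the paper. But the mechanism you propose for landing $x^*$ in $J(T(x;\Omega))$ is wrong, and this is the crux of the lemma.

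You say the fuzzy sum rule will produce ``a normal to $\Omega$ at the corresponding $x$-component,'' and that Proposition~\ref{JT} then ``converts the $\Omega$-normal information into the form $x^*\in J(T(x;\Omega))$.'' Neither claim is right. A regular normal to $\Omega$ lies in $\widehat N(x;\Omega)$, which is the \emph{polar} of $T(x;\Omega)$; there is no way to turn it into an element of $J(T(x;\Omega))$ by a small perturbation. And Proposition~\ref{JT} does not convert normals to tangents at all: it is a stability statement for $x\mapsto J(T(x;\Omega))$, used elsewhere (in the chain-rule proof), not here.

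The paper's device is quite different and is the missing idea. One does not penalize by $\dist(y;S(x))+\delta_\Omega(x)$; one penalizes by
\[
\varphi(x,y)=\|x-x'\|_X+\delta_{\gph S|_\Omega}(x,y),
\]
with $x'\in\Omega$ the fixed target point from the failure of the Lipschitz-like inclusion. After Ekeland (with a weighted product norm $p(x,y)=\kappa\varrho\|x\|_X+\|y\|_Y$, $\varrho$ tied to $\gamma$) and the fuzzy sum rule, the $x^*$-component coming from the smooth part $\|x-x'\|_X$ is $J(x_1-x')/\|x_1-x'\|$. Passing to a nearby point $x_3\in\Omega$ and using the continuity of $J$, one replaces this by $\hat x^*:=J(x'-x_3)/\|x'-x_3\|$. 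Now the convexity of $\Omega$ is used in the most direct way possible: since $x',x_3\in\Omega$, the chord direction $x'-x_3$ lies in $T(x_3;\Omega)$, so $\hat x^*\in J(T(x_3;\Omega))$ by homogeneity of $J$. That is where the tangential constraint in \eqref{nece1} comes from --- from the gradient of the penalty toward a point of $\Omega$, not from any normal to $\Omega$.

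A secondary point: you introduce a slack constant $\kappa'$ between $\kappa$ and $\sqrt{1/\gamma^2-1}$, but the lemma does not assume any relation between $\gamma$ and $\kappa$; the paper obtains the contradiction directly for $\kappa$ by choosing $\varrho$ small in terms of $\gamma$ so that the resulting $(\hat x^*,\hat y^*)$ satisfies $\|\hat x^*\|=1>\kappa\|\hat y^*\|$ while still belonging to $\widehat N_\gamma^c$.
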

\begin{proof}
Assume without loss of generality that $\operatorname{gph}S|_\Omega$ is closed in $X\times Y$ and define the positive number $\sigma:=\delta/4(\kappa+1)$. Arguing by contradiction, suppose that $S$ does not have the Lipschitz-like property relative to $\Omega$ around $(\bar x,\bar y)$ with constant $\kappa$. Then there are $x',x''\in \Omega\cap(\bar x+\sigma B_X)$ with $x'\not=x''$ and $y''\in S(x'')\cap (\bar y+\sigma B_Y)$ such that
\begin{equation}\label{notaubin}
d(y'';S(x'))>\kappa\|x'-x''\|_X:=\beta
\end{equation}
via the distance function in $Y$. 
Clearly, we have $0<\beta\leq2\kappa\sigma$. Let us now show that condition \eqref{nece1} from Lemma~\ref{Necessity} fails in this case whenever $\gamma\in(0,1)$. Define the extended-real-valued function $\varphi:X\times Y\to\oR$ by
$$
\varphi(x,y)=\|x-x'\|_X+\dd_{\operatorname{gph} S|_\Omega}(x,y),\quad(x,y)\in X\times Y,
$$
via the indicator function of the set $\gph S_\Omega$ and observe that $\ph$ is lower semicontinuous (l.s.c.) by the closedness of \text{gph}~$S|_\Omega$), that $\inf \varphi$ is finite, and that
$$
\varphi(x'',y'')\leq\inf \varphi+\frac{\beta}{\kappa}.
$$
Equip the product space $X\times Y$ with the norm
\begin{equation}\label{rho}
p(x,y):=\kappa\varrho\|x\|_X+\|y\|_Y,\;\mbox{ where }\; \varrho:={\rm min}\Big\{\frac{\gamma}{4},\frac{1}{2k}\Big\},
\end{equation}
and apply the fundamental Ekeland variational principle (see, e.g., \cite[Theorem~2.26]{Mordukhovich2006}) to $\ph$ on $X\times Y$ with the norm \eqref{rho}. This gives us a pair $(\tilde{x},\tilde{y})\in X\times Y$ such that
\begin{align}
&p(\tilde{x}-x'',\tilde{y}-y'')\leq\beta, \label{align1}\\
&\varphi(\tilde{x},\tilde{y})\leq\varphi(x'',u''),\;\mbox{ and} \label{align2}\\
&\varphi(\tilde{x},\tilde{y})\leq\varphi(x,y)+\frac{1}{\kappa}p(x-\tilde{x},y-\tilde{y})\;\mbox{ for all }\;(x,y)\in X\times Y. \label{align3}
\end{align}
It immediately follows from \eqref{align1} that
\begin{equation}\label{<beta}
\|\tilde y-y''\|_Y\leq\beta.
\end{equation}
So we have $\tilde x\not= x'$, since otherwise 
$$
d(y'';S(x'))=d(y'';S(\tilde x))\leq \|\tilde y-y''\|_Y\leq\beta,
$$
which contradicts \eqref{notaubin}. Furthermore, using the continuity of the duality mapping \eqref{dual map} due to the reflexivity of $X$ yields the existence of $\nu>0$ such that
\begin{equation}\label{Jcontinuous}
\|J(x'-x)-J(x'-\tilde x)\|_{X^*}\leq\frac{\varrho}{4}\|x'-\tilde x\|_X\;\mbox{ for all }\;x\in \tilde x+\nu B_X.
\end{equation}
Put $\eta:={\rm min}\{\frac{\varrho}{4}\|x'-\tilde x\|_X,\frac{\delta}{4},\varrho,\gamma\}$ and deduce from \eqref{align3} and \cite[Proposition~3.3(ii)]{Mordukhovich2023} that there exist $(x_i,y_i)\in\operatorname{gph} S|_\Omega\cap[(\tilde x+\eta B_X)\times (\tilde y+\eta B_Y)]$, $i=1,2,3$, such that
\begin{equation*}\label{(0,0)in}
(0,0)\in\widehat\partial\psi_1(x_1,y_1)+\widehat\partial\psi_2(x_2,y_2)+\widehat N\big((x_3,y_3);\operatorname{gph} S|_\Omega\big)+\eta[B_{X^*}\times B_{Y^*}],
\end{equation*}
where both $\psi_1$ and $\psi_2$ are the Lipschitz continuous convex functions defined by
$$
\psi_1(x,y):=\|x-x'\|_X~~\mbox{and}~~ \psi_2(x,y):=\frac{1}{\kappa}p(x-\tilde{x},y-\tilde{y}).
$$
Since we clearly have $x_1\not=x'$ due to $\tilde x\not=x'$ and $\|x_1-\tilde x\|_X<\|x'-\tilde x\|_X$, it follows from \cite[Example~3.36]{Nam}) that
$$
\widehat\partial\psi_1(x_1,y_1)=\bigg(\frac{J(x_1-x')}{\|x_1-x'\|_X},0\bigg)~~\operatorname{and}~~
\widehat\partial\psi_2(x_2,y_2)\subset \varrho B_{X^*}\times\frac{1}{\kappa}B_{Y^*}.
$$
Thus there exists $(u_1,v_1)\in B_{X^*}\times B_{Y^*}$ such that
\begin{equation}\label{normalcone}
(x^*,y^*)\in\widehat N\big((x_3,y_3);\operatorname{gph} S|_\Omega\big)\;\mbox{ with }
\end{equation}
\begin{equation*}\label{xstar}
x^*:=\frac{J(x_1-x')}{\|x'-x_1\|_X}-(\varrho+\eta) u_1~~\mbox{and}~~ y^*:=\left(\frac{1}{\kappa}+\eta\right)v_1.
\end{equation*}
This gives us by taking \eqref{Jcontinuous} into account that
$$
\begin{aligned}
&\left\|\frac{J(x'-x_i)}{\|x'-x_i\|_X}-\frac{J(x'-\tilde x)}{\|x'-\tilde x\|_X}\right\|_{X^*}\\
\leq&\left\|\frac{J(x'-x_i)}{\|x'-x_i\|_X}-\frac{J(x'-x_i)}{\|x'-\tilde x\|_X}\right\|_{X^*}
+\left\|\frac{J(x'-x_i)}{\|x'-\tilde x\|_X}-\frac{J(x'-\tilde x)}{\|x'-\tilde x\|_X}\right\|_{X^*}\\
\leq&\frac{\|x_i-\tilde x\|_X}{\|x'-\tilde x\|_X}+\frac{\varrho}{4}\leq\frac{\varrho}{2}~~\mbox{for}~~i=1,3,
\end{aligned}
$$
which shows therefore that
$$
\left\|\frac{J(x'-x_1)}{\|x'-x_1\|_X}- \frac{J(x'-x_3)}{\|x'-x_3\|_X}\right\|_{X^*}\leq\varrho.
$$
Define $\hat x^*:=\frac{J(x'-x_3)}{\|x'-x_3\|_X}$ and $\hat y^*:=\left(\frac{1}{\kappa}-\eta\right)v_1$. Then we get 
\begin{equation}\label{>kappa}
\|\hat x^*\|_{X^*}>\kappa\|\hat y^*\|_{Y^*}
\end{equation}
and find $u_2\in B_{X^*}$ satisfying
\begin{equation}\label{xystar}
\hat x^*=x^*+(\varrho+\eta)u_1+\varrho u_2~~\operatorname{and}~~\hat y^*=y^*-2\eta v_1.
\end{equation}
Observe that $\hat x^*\in J(T(x_3;\Omega))$ by the convexity of $\Omega$ and the homogeneity of $J$. It follows from \eqref{align2} that
$(\tilde{x},\tilde{u})\in\text{gph}~S|_X$,
and so $\|\tilde{x}-x'\|\leq\|x''-x'\|$. Employing further the triangle inequality yields
\begin{equation*}\label{x-x}
\begin{aligned}
\|x_3-\bar x\|_X\leq&\|x_3-\tilde x\|_X+\|\tilde{x}-x'\|_X+\|x'-\bar x\|_X\\
\leq&\eta+\|x''-\bar x\|_X+2\|x'-\bar x\|_X\leq\frac{\delta}{4}+3\sigma<\delta,
\end{aligned}
\end{equation*}
which being combined with \eqref{<beta} tells us that
$$
\|y_3-\bar y\|_Y\leq\|y_3-\tilde y\|_Y+\|\tilde y-y''\|_Y+\|y''-\bar y\|_Y
\leq\eta+\beta+\sigma<\delta.
$$
This shows that $(x_3,y_3)\in\operatorname{gph}~S|_\Omega\cap[(\bar x+\delta B_X)\times(\bar y+\delta B_Y)]$ and thus verifies \eqref{nece1} for $(x,y):=(x_3,y_3)$. Furthermore, it follows from \eqref{normalcone} and \eqref{xystar} that
$$
\begin{aligned}
&\limsup\limits_{(x,y)\stackrel{\text{gph}~S|_\Omega}{\longrightarrow}(x_3,y_3)}
\frac{\langle(\hat x^*,\hat y^*),(x,y)-(x_3,y_3)\rangle}{\|(x,y)-(x_3,y_3)\|_{X\times Y}}\\
=&\limsup\limits_{(x,y)\stackrel{\text{gph}~S|_\Omega}{\longrightarrow}(x_3,y_3)}
\frac{\langle (x^*+(\varrho+\eta)u_1+\varrho u_2,y^*-2\eta v_1),(x,y)-(x_3,y_3)\rangle}{\|(x,y)-(x_3,y_3)\|_{X\times Y}}\\
\leq&\limsup\limits_{(x,y)\stackrel{\text{gph}~S|_\Omega}{\longrightarrow}(x_3,y_3)}
\frac{\langle(x^*,y^*),(x,y)-(x_3,y_3)\rangle}{\|(x,y)-(x_3,y_3)\|_{X\times Y}}+4\varrho\\
\leq&\gamma\leq\gamma\|(\hat x^*,\hat y^*)\|_{X^*\times Y^*},
\end{aligned}
$$
which justifies $(\hat x^*,\hat y^*)\in \widehat N_\gamma^c\big((x_3,y_3);\operatorname{gph} S|_\Omega\big)$. Combining the latter with the estimate in \eqref{>kappa} contradicts \eqref{nece1} and hence completes the proof of the claimed sufficiency.
\end{proof}

Unifying now the above lemmas brings us to the complete {\it neighborhood characterizations} of the relative Lipschitz-like property of set-valued mappings with the {\it precise calculation} of the exact Lipschitzian bound. 

\begin{theorem}\label{equi} Let $S:X\rightrightarrows Y$ be a set-valued mapping between reflexive Banach spaces, and let $\Omega\subset X$ be a closed and convex set. Assume that the set $\operatorname{gph} S|_\Omega$  is closed around some point  $(\bar x,\bar y)\in\operatorname{gph} S|_\Omega$. Then given $\varepsilon_0\in(0,1)$ and $\kappa_0\geq0$ with $\varepsilon_0=\frac{1}{\sqrt{\kappa_0^2+1}}$, the following statements are equivalent:

{\bf(ii)} For any $\kappa>\kappa_0$, $S$ has the Lipschitz-like property relative to $\Omega$ with constant $\kappa$.

{\bf(ii)} For any $\varepsilon\in(0,\varepsilon_0)$, there exists $\delta>0$ such that \eqref{TX0=0} holds.

{\bf(iii)} For any $\kappa>\kappa_0$, there exist $\nu\in(0,\nu_0)$ and $\delta>0$ such that \eqref{nece1} holds.\\[0.5ex]
Moreover, the exact Lipschitz bound of $S$ relative to $\O$ around $(\ox,\oy)$ is calculated by
\begin{eqnarray}
\operatorname{lip}_\Omega S(\bar x|\bar y)\hskip-0.2cm&=&\hskip-0.2cm\lim\limits_{\delta\to0}\inf
\bigg\{\sqrt{\frac{1}{\varepsilon^2}-1}~\bigg|~\widehat
N_\varepsilon^c\big((x,y);\operatorname{gph} S|_\Omega\big)\times[J(T(x;\Omega))\times\{0\}]=\{(0,0)\}, \nonumber\\
~&&\hskip1cm \mbox{ for all }\;(x,y)\in\operatorname{gph} S|_\Omega\times[(\bar x+\delta B_X)\times (\bar y+\delta B_Y)]\}\bigg\}. \label{x-x-m}
\end{eqnarray}
\end{theorem}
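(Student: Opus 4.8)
The plan is to run the cyclic chain of implications (i) $\Rightarrow$ (ii) $\Rightarrow$ (iii) $\Rightarrow$ (i), feeding each statement into one of Lemmas~\ref{Aubin-TX0}, \ref{Necessity}, \ref{Sufficiency}, and then to read off the bound formula \eqref{x-x-m} from their quantitative content. The one structural fact that makes the quantifiers line up is that, since $\varepsilon_0=1/\sqrt{\kappa_0^2+1}$, the map $\kappa\mapsto 1/\sqrt{\kappa^2+1}$ is a strictly decreasing bijection of $(\kappa_0,\infty)$ onto $(0,\varepsilon_0)$, and that $\kappa>\sqrt{1/\varepsilon^2-1}$ is equivalent to $\varepsilon>1/\sqrt{\kappa^2+1}$; I will use this dictionary throughout without further comment.

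For (i) $\Rightarrow$ (ii): given $\varepsilon\in(0,\varepsilon_0)$, I pick $\kappa>\kappa_0$ with $\varepsilon<1/\sqrt{\kappa^2+1}$, use (i) to get the relative Lipschitz-like property with constant $\kappa$, and apply Lemma~\ref{Aubin-TX0} to produce $\delta>0$ with \eqref{TX0=0}. For (ii) $\Rightarrow$ (iii): given $\kappa>\kappa_0$, I pick $\varepsilon\in(1/\sqrt{\kappa^2+1},\varepsilon_0)\subset(0,1)$, use (ii) to get $\delta>0$ with \eqref{TX0=0}, observe $\kappa>\sqrt{1/\varepsilon^2-1}$, and apply Lemma~\ref{Necessity} to obtain $\gamma\in(0,\varepsilon)\subset(0,\varepsilon_0)$ — serving as the $\nu$ demanded in (iii) — such that \eqref{nece1} holds on the \emph{same} $\delta$-neighborhood. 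For (iii) $\Rightarrow$ (i): given $\kappa>\kappa_0$, I let (iii) supply $\gamma\in(0,1)$ and $\delta>0$ with \eqref{nece1} and apply Lemma~\ref{Sufficiency} — whose standing hypotheses ($\Omega$ closed and convex, $\operatorname{gph}S$ locally closed) are exactly those assumed in the theorem — to conclude the relative Lipschitz-like property with constant $\kappa$.

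For the exact bound, write $\Lambda$ for the right-hand side of \eqref{x-x-m} and $g(\delta)$ for its inner infimum at scale $\delta$. Since shrinking $\delta$ only weakens the requirement that \eqref{TX0=0} hold throughout the $\delta$-neighborhood, $g$ is nondecreasing in $\delta$, so the limit in \eqref{x-x-m} is a genuine monotone limit equal to $\inf_{\delta>0}g(\delta)=\Lambda$. To prove $\Lambda\le\operatorname{lip}_\Omega S(\bar x,\bar y)$, I take any $\kappa$ exceeding the exact bound, so $S$ is relatively Lipschitz-like with constant $\kappa$; Lemma~\ref{Aubin-TX0} then gives, for every $\varepsilon<1/\sqrt{\kappa^2+1}$, some $\delta$ with \eqref{TX0=0}, whence $\Lambda\le g(\delta)\le\sqrt{1/\varepsilon^2-1}$, and letting $\varepsilon\uparrow 1/\sqrt{\kappa^2+1}$ and then $\kappa$ decrease to the exact bound yields the inequality. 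For the reverse inequality I take $\kappa>\Lambda$; monotonicity produces $\delta_0$ with $g(\delta_0)<\kappa$, hence an admissible $\varepsilon\in(0,1)$ with $\sqrt{1/\varepsilon^2-1}<\kappa$ and \eqref{TX0=0} valid on the $\delta_0$-neighborhood; then Lemma~\ref{Necessity} produces $\gamma\in(0,1)$ and Lemma~\ref{Sufficiency} converts this into the relative Lipschitz-like property with constant $\kappa$, so $\operatorname{lip}_\Omega S(\bar x,\bar y)\le\kappa$, and letting $\kappa\downarrow\Lambda$ closes the argument. The step that requires care is precisely this last extraction: one must check the admissible $\varepsilon$ can be chosen strictly below $1$ (needed to invoke Lemma~\ref{Necessity}), which follows because $\widehat N_\varepsilon^c$ is nondecreasing in $\varepsilon$, so for each fixed neighborhood the set of admissible $\varepsilon$ is an interval with left endpoint $0$, while $\sqrt{1/\varepsilon^2-1}$ is continuous and decreasing on $(0,1]$. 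The remaining work — verifying that all three lemmas apply on a common $\delta$-neighborhood without nested shrinking — is routine bookkeeping, which I expect to be the only genuine nuisance in the proof.
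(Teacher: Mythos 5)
Your proof is correct and assembles the three preceding lemmas in precisely the way the paper intends. The paper in fact omits a formal proof of Theorem~\ref{equi} (offering only an informal discussion of the bound formula after the statement), so your argument supplies exactly the missing details: the cyclic chain (i)$\Rightarrow$(ii) via Lemma~\ref{Aubin-TX0}, (ii)$\Rightarrow$(iii) via Lemma~\ref{Necessity}, and (iii)$\Rightarrow$(i) via Lemma~\ref{Sufficiency}, mediated by the monotone change of variables $\kappa\leftrightarrow 1/\sqrt{\kappa^2+1}$, with the exact-bound formula then extracted from the two halves of this cycle. Two small remarks: the hypothesis of Lemma~\ref{Sufficiency} is stated as local closedness of $\operatorname{gph}S$ rather than of $\operatorname{gph}S|_\Omega$ as in the theorem, but this is harmless since its proof only uses closedness of $\operatorname{gph}S|_\Omega$, which the theorem supplies directly (and follows from the lemma's version since $\Omega$ is closed); and for the reverse inequality in \eqref{x-x-m} you should, for completeness, note that the trivial cases $\operatorname{lip}_\Omega S(\bar x,\bar y)=\infty$ and $\Lambda=\infty$ are disposed of before taking $\kappa>\Lambda$, though this is routine.
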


As we see, the exact bound formula (\ref{x-x-m}) makes use of the parameter $\varepsilon$ in the $\varepsilon$-regular normal cone. An explanation of (\ref{x-x-m}) can be as follows. Let $\kappa_0:= \operatorname{lip}_\Omega S(\bar x|\bar y) \geq 0$. We know that $\ve$ in the definition of $\widehat N_\varepsilon^c\big((x,y);\operatorname{gph} S|_\Omega\big)$ cannot be greater than 1, since otherwise the normal cone is the entire space. When $\delta$ in (\ref{x-x-m}) is getting smaller, so is the neighborhood $(\bar x+\delta B_X)\times (\bar y+\delta B_Y)$. Thus the range of the $\ve$'s that satisfies the condition $\widehat N_\varepsilon^c\big((x,y);\operatorname{gph} S|_\Omega\big)\times[J(T(x;\Omega))\times\{0\}]=\{(0,0)\}$ is getting larger. Therefore, the ``inf" is a decreasing function of $\delta$ with a reachable lower bound $\kappa_0$.

Theorem~\ref{equi} tells us that condition \eqref{TX0=0} provides a complete neighborhood characteristics of the relative Lipschitz-like property of set-valued mappings when the constraint set $\O$ is closed and convex. However, such s characterization {\it cannot be achieved} if the $\ve$-regular normal cone \eqref{e-cone} is replaced by the set of $\ve$-normals \eqref{e-set}. The following example confirms this statement even in the case
of the simplest single-valued mapping $S\colon\R\to\R$ and the closed interval $\O$ in $\R$.

\begin{example}\label{exa:e-dif}
{\rm Let $S:\mathbb R\to\mathbb R$ be defined by $S(x):=x$ for all $x\in\mathbb R$, and let $\Omega:=[0,1]$. Clearly, $S$ has the Lipschitz-like property relative to $\Omega$ around $(0,0)$. In this case, $J$ is the identity mapping, $T(0;\Omega)=[0,\infty)$, and 
$$
\widehat N_\varepsilon((0,0);\operatorname{gph} S|_\Omega)=\big\{(x^*,y^*)\in\R^2~\big|~x^*+y^*\leq\sqrt 2\varepsilon\big\}
$$
for any $\ve>0$. Therefore, we have the expression
$$
\widehat N_\varepsilon((0,0);\operatorname{gph} S|_\Omega)\cap[T(0;\Omega)\times\{0\}]=[0,\sqrt2\varepsilon]\times\{0\},
$$
which tells us that the counterpart of \eqref{TX0=0} fails for the set of $\ve$-normals \eqref{e-set}.}
\end{example}\vspace*{-0.15in}

\section{Pointbased Coderivative Characterizations of the Relative Lipschitz-like Property}\label{sec:point}

The usage of the neighborhood characterizations of the relative Lipschitz-like property obtained in Section~\ref{sec:chara} may be challenging to apply, since they involve the consideration of neighborhood points. Much more preferable are {\it pointbased criteria}, which are expressed precisely at the reference point. To derive such pointbased results, in this section we introduce new coderivative constructions, which enjoy the major pointbased calculus rules developed in the subsequent Sections~\ref{sec:chain} and \ref{sec:sum}.\vspace*{0.02in}

Recall first the notions of (sequential) outer and inner limits for multifunctions acting between Banach spaces that are used in what follows.

\begin{Def}\label{limits}
Let $S:X\rightrightarrows Y$ be a set-valued mapping between Banach spaces, and let $\bar x\in{\rm dom}\,S:=\{x\in X\;|\;S(x)\ne\emptyset\}$. Then we have:

{\bf (i)} The {\sc strong} and {\sc weak}$^*$ $($if $Y=Z^*$ for some Banach space Z$)$ {\sc outer limits} are defined, respectively, by
$$
 \begin{aligned}
 &s\mbox{-}\Limsup_{x\to\ox}S(x):=\big\{y\in Y~\big|~\exists x_k\to\bar x~\operatorname{and}~y_k\in S(x_k)~\operatorname{such~that}~y_k\to y\big\},\\
 &w^*\mbox{-}\Limsup_{x\to\bar x}S(x):=\big\{y\in Y~\big|~\exists x_k\to\bar x~\operatorname{and}~y_k\in S(x_k)~\operatorname{such~that}~y_k\rightharpoonup^* y\big\}.
 \end{aligned}
 $$
 
 {\bf(ii)} The {\sc strong inner limit} is described by
 $$
s\mbox{-}\Liminf_{x\to\bar x}
S(x):=\big\{y\in Y~\big|~\forall x_k\to\bar x,~\exists y_k\in S(x_k)~\operatorname{such~that}~y_k\to y\big\}.
$$
\end{Def}

Next we define the coderivative notions needed for our characterizations. Note that the $\ve$-coderivative construction from Definition~\ref{coderivatives}(i) was actually used in Section~\ref{sec:chara}, where we preferred to deal directly with the $\ve$-regular normal cone. Similarly to \cite{Mordukhovich2023}, observe that, in comparing with the recent {\it projectional coderivative} studied in \cite{Yang2021} via projections of normals to the tangent cone of the constraint set, here we are motivated by the fact that the intersection of the $\varepsilon$-regular normal cone with the contingent cone would be much easier to deal with than to find projections.

\begin{definition}\label{coderivatives}
Let $S:X\rightrightarrows Y$ be a set-valued mapping between Banach spaces, let $\Omega$ be a nonempty subset of $X$, and let $(\bar x,\bar y)\in \operatorname{gph} S|_\Omega$. We have the following constructions:

{\bf(i)} Given $\varepsilon\geq0$, the {\sc $\varepsilon$-conic regular contingent coderivative} of $S$ {\sc relative to} $\Omega$ at $(\bar x,\bar y)$ is defined as a multifunction $\widehat D_{\Omega,\varepsilon}^{c*} S(\bar x|\bar y):Y^*\rightrightarrows X^*$ with the values
\begin{equation}\label{harnepsilion'}
\begin{array}{ll}
\widehat D_{\Omega,\varepsilon}^{c*} S(\bar x|\bar y)(y^*):=\big\{x^*\in X^*~\big|&(x^*,-y^*)\in\widehat N_\varepsilon^c\big((\bar x,\bar y);\operatorname{gph} S|_\Omega\big)\\
&\cap\big[J\big(T(\bar x;\Omega)\big)\times Y^*\big]\big\}.
\end{array}
\end{equation}

{\bf(ii)} The {\sc conic normal contingent coderivative} of $S$ {\sc relative to} $\Omega$ at $(\bar x,\bar y)$ is a multifunction $\widetilde D_\Omega^{c*}S(\bar x|\bar y):Y^*\rightrightarrows X^*$ defined by
\begin{equation}\label{normalcoder}
\widetilde D_\Omega^{c*}S(\bar x|\bar y)(\bar y^*):=w^*\mbox{-}\Limsup_{\stackrel{\stackrel{(x,y)\stackrel{\operatorname{gph} S|_\Omega}{\longrightarrow}(\bar x,\bar y)}{y^*\rightharpoonup^* \bar y^*}}{\varepsilon\downarrow0}}
\widehat D_{\Omega,\varepsilon}^{c*} S(x|y)(y^*).
\end{equation}
That is, \eqref{normalcoder} is the collection of such $\bar x^*\in X^*$ for which there exist sequences $\varepsilon_k\downarrow 0$, $(x_k,y_k)\stackrel{\operatorname{gph} S|_\Omega}{\longrightarrow}(\bar x,\bar y)$, and $(x_k^*,y_k^*)\rightharpoonup^*(\bar x^*,\bar y^*)$ with
\begin{equation}\label{cod-seq}
(x_k^*,-y_k^*)\in\widehat N^c_{\varepsilon_k}\big((x_k,y_k);\operatorname{gph} S|_\Omega\big)\cap\big[J\big(T(x_k;\Omega)\big)\times Y^*\big],\quad k\in\mathbb N.
\end{equation}

{\bf(iii)} The {\sc conic mixed contingent coderivative} of $S$ {\sc relative to} $\Omega$ at $(\bar x,\bar y)$ is a multifunction $D_\Omega^{c*}S(\bar x|\bar y):Y^*\rightrightarrows X^*$ defined by
\begin{equation}\label{mixedcoder}
D_\Omega^{c*}S(\bar x|\bar y)(\bar y^*):=w^*\mbox{-}\Limsup_{\stackrel{\stackrel{(x,y)\stackrel{\operatorname{gph} S|_\Omega}{\longrightarrow}(\bar x,\bar y)}{y^*\longrightarrow\bar y^*}}{\varepsilon\downarrow 0}}
\widehat D_{\Omega,\varepsilon}^{c*}S(x|y)(y^*).
\end{equation}
That is, \eqref{mixedcoder} is the collection of such $\bar x^*\in X^*$ for which there exist sequences $\varepsilon_k\downarrow0$, $(x_k,y_k)\stackrel{\operatorname{gph} S|_\Omega}{\longrightarrow}(\bar x,\bar y)$, $y_k^*\to\bar y^*$ and $x_k^*\rightharpoonup^*\bar x^*$ with $(x^*_k,y^*_k)$ satisfying \eqref{cod-seq}.

{\bf(iv)} We say that $S$ is {\sc coderivatively normal relative to} $\O$ at $(\ox,\oy)$ if 
\begin{equation*}
\|D^{c*}_\O S(\ox,\oy)\|=\|\Tilde D^{c*}_\O S(\ox,\oy)\|
\end{equation*}
via the coderivative norms as positively homogeneous mappings \eqref{norm}.
\end{definition}
 
Note that the $\ve$-conic regular contingent coderivative from Definition~\ref{coderivatives}(i) differs from the $\ve$-contingent relative coderivative from \cite[Definition~2.4(i)]{Mordukhovich2023} by using the $\ve$-regular normal cone \eqref{e-cone} instead of the $\ve$-normal set \eqref{e-set}. As shown in the previous Section~\ref{sec:chara}, the usage of the new $\ve$-coderivative allows us to establish {\it neighborhood characterizations} of the relative Lipschitz-like property of set-valued mappings, which is not the case for the $\ve$-construction from \cite{Mordukhovich2023}; see Example~\ref{exa:e-dif}.

On the hand, the next lemma tells us that the limiting constructions from Definition~\ref{coderivatives}(ii,iii) agrees with the corresponding limiting coderivatives from \cite[Definition~2.4]{Mordukhovich2023} in the general Banach space setting. Note that this does not diminish the value of our new coderivative constructions. First of all, they are simpler to deal with and allow us to essentially simplify the proofs of the stability results from \cite{Mordukhovich2023}; see, e.g., Theorem~\ref{criterion} below. Furthermore, it is most important for variational theory and applications that the newly introduced limiting coderivatives enjoy the fundamental calculus rules derived in Sections~\ref{sec:chain} and \ref{sec:sum}. These results are also new for the coderivative constructions from \cite{Mordukhovich2023} for which calculus rules have not been developed earlier. 

\begin{lemma}\label{equal} Let $S:X\rightrightarrows Y$ be a set-valued mapping between Banach spaces, let $\O\subset X$, and let $(\bar x,\bar y)\in \operatorname{gph} S|_\Omega$. The following hold:

{\bf(i)} We have $x^*\in D_\Omega^{c*}S(\bar x|\bar y)(y^*)$ if and only if there exist sequences $\varepsilon_k\downarrow0$, $(x_k,y_k)\stackrel{\operatorname{gph} S|_\Omega}{\longrightarrow}(\bar x,\bar y)$, and $(x_k^*,-y^*)\in\widehat N_{\varepsilon_k}\big((x_k,y_k);\operatorname{gph} S|_\Omega\big)\cap\big[J\big(T(x_k;\Omega)\big)\times Y^*\big]$ such that
$y_k^*\to y^*$ and $x_k^*\rightharpoonup^* x^*$ as $k\to\infty$.

{\bf(ii)} We have $x^*\in\widetilde D_\Omega^{c*}S(\bar x|\bar y)(y^*)$ if and only if there exist sequences $\varepsilon_k\downarrow0$, $(x_k,y_k)\stackrel{\operatorname{gph} S|_\Omega}{\longrightarrow}(\bar x,\bar y)$, and $(x_k^*,-y_k^*)\in\widehat N_{\varepsilon_k}\big((x_k,y_k);\operatorname{gph} S|_\Omega\big)\cap\big[J\big(T(x_k;\Omega)\big)\times Y^*\big]$ such that
$y_k^*\rightharpoonup^* y^*$ and $x_k^*\rightharpoonup^* x^*$ as $k\to\infty$.
\end{lemma}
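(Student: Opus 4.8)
The plan is to show that the intersection of the $\varepsilon$-regular normal cone $\widehat N_\varepsilon^c$ with $J(T(x;\Omega))\times Y^*$ that appears inside the outer limits defining $D_\Omega^{c*}$ and $\widetilde D_\Omega^{c*}$ can be replaced by the intersection of the $\varepsilon$-normal \emph{set} $\widehat N_\varepsilon$ with the same product, once one is allowed to pass to the limit $\varepsilon\downarrow 0$ and vary the base point along $\gph S|_\Omega$. The key structural observation is the elementary inclusion
$$
\widehat N_{\varepsilon/r}(z;\Gamma)\cap r B_{Z^*}\subset\widehat N_\varepsilon^c(z;\Gamma)\quad\text{and}\quad\widehat N_\varepsilon^c(z;\Gamma)\cap r B_{Z^*}\subset\widehat N_{\varepsilon r}(z;\Gamma),
$$
which follows directly from comparing the defining inequalities \eqref{e-cone} and \eqref{e-set}: if $\limsup\frac{\langle z^*,\cdot\rangle}{\|\cdot\|}\le\varepsilon\|z^*\|_{Z^*}$ and $\|z^*\|_{Z^*}\le r$, then the left-hand side is $\le\varepsilon r$, and conversely. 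Here $\Gamma=\gph S|_\Omega$, $Z=X\times Y$, and $z=(x_k,y_k)$.

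First I would prove (i). For the ``if'' direction, suppose we are given $\varepsilon_k\downarrow 0$, $(x_k,y_k)\stackrel{\gph S|_\Omega}{\to}(\bar x,\bar y)$ with $(x_k^*,-y^*)\in\widehat N_{\varepsilon_k}((x_k,y_k);\gph S|_\Omega)\cap[J(T(x_k;\Omega))\times Y^*]$ and $y_k^*\to y^*$, $x_k^*\rightharpoonup^* x^*$. Since weakly$^*$ convergent sequences in a reflexive space are bounded, there is $r>0$ with $\|(x_k^*,-y^*_k)\|_{X^*\times Y^*}\le r$ for all $k$ (note $y_k^*\to y^*$ keeps the $Y^*$-component bounded too). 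Applying the second displayed inclusion above with $\varepsilon$ replaced by $\varepsilon_k r$ — more precisely, setting $\widetilde\varepsilon_k:=\varepsilon_k r\downarrow 0$ — gives $(x_k^*,-y_k^*)$... here there is a subtlety: the membership in (i)'s hypothesis has $-y^*$, not $-y_k^*$, in the normal set, whereas \eqref{cod-seq} requires $-y_k^*$. I would handle this by noting that the defining outer limit \eqref{mixedcoder} allows the dual test sequence to vary, so it suffices to produce \emph{some} sequence $(\tilde x_k^*,\tilde y_k^*)$ with $\tilde y_k^*\to y^*$, $\tilde x_k^*\rightharpoonup^* x^*$ and $(\tilde x_k^*,-\tilde y_k^*)\in\widehat N_{\widetilde\varepsilon_k}^c(\cdots)\cap[J(T(x_k;\Omega))\times Y^*]$; taking $\tilde x_k^*:=x_k^*$, $\tilde y_k^*:=y^*$ (constant) works, and $y^*\to y^*$ trivially. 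Then the inclusion $\widehat N_{\varepsilon_k r}(z;\Gamma)\cap rB\subset\widehat N_{\varepsilon_k r}^c(z;\Gamma)$... wait, I need the \emph{other} direction to get into $\widehat N^c$; that is the first displayed inclusion with $\varepsilon\leftarrow\varepsilon_k r$, $r\leftarrow r$: $\widehat N_{\varepsilon_k r/r}(z)\cap rB=\widehat N_{\varepsilon_k}(z)\cap rB\subset\widehat N_{\varepsilon_k r}^c(z)$. So the hypothesis $(x_k^*,-y^*)\in\widehat N_{\varepsilon_k}$ plus the norm bound $\le r$ yields $(x_k^*,-y^*)\in\widehat N_{\varepsilon_k r}^c$, and since $\varepsilon_k r\downarrow 0$ this exhibits $x^*\in D_\Omega^{c*}S(\bar x|\bar y)(y^*)$. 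For the ``only if'' direction, start from sequences as in Definition~\ref{coderivatives}(iii), again extract a uniform bound $r$ on $\|(x_k^*,-y_k^*)\|$, and apply the second inclusion $\widehat N_{\varepsilon_k}^c(z)\cap rB\subset\widehat N_{\varepsilon_k r}(z)$ to land in the $\varepsilon_k r$-normal set with $\varepsilon_k r\downarrow 0$; the $J(T(x_k;\Omega))\times Y^*$ intersection is untouched throughout. Part (ii) is verbatim the same argument with $y_k^*\to y^*$ everywhere replaced by $y_k^*\rightharpoonup^* y^*$ (which is still bounded by uniform boundedness), so I would simply say ``the proof of (ii) follows the same lines.''

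The main obstacle — really the only non-bookkeeping point — is securing the \emph{uniform} norm bound $r$ on the dual sequences so that the $\varepsilon$-vs.-$\varepsilon r$ conversion has a fixed radius. This is where reflexivity (hence weak$^*$ = weak sequential compactness and the uniform boundedness principle) is used: a weakly$^*$ convergent sequence $(x_k^*)$ is norm-bounded, and $y_k^*\to y^*$ (or $\rightharpoonup^* y^*$) is likewise bounded, so $r:=\sup_k\|(x_k^*,-y_k^*)\|_{X^*\times Y^*}<\infty$. One should also check the cosmetic point that the intersection with $J(T(x_k;\Omega))\times Y^*$ is the \emph{same} constraint in both formulations — it is, by inspection of \eqref{cod-seq} versus the statements in (i),(ii) — so no rescaling of that factor is needed. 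Everything else is a direct comparison of the two $\limsup$ inequalities defining \eqref{e-cone} and \eqref{e-set}.
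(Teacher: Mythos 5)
Your ``only if'' direction is correct and essentially reproduces the paper's argument: a uniform upper bound $L$ on $\|(x_k^*,-y_k^*)\|_{X^*\times Y^*}$ (guaranteed by the uniform boundedness principle) lets you multiply the right-hand side of the inequality in \eqref{e-cone} by $L$ and land in \eqref{e-set} with rescaled tolerance $L\varepsilon_k\downarrow 0$.

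The ``if'' direction, however, has a genuine gap. Your displayed inclusion
$\widehat N_{\varepsilon/r}(z;\Gamma)\cap rB_{Z^*}\subset\widehat N_\varepsilon^c(z;\Gamma)$,
which you then specialize to $\widehat N_{\varepsilon_k}(z;\Gamma)\cap rB_{Z^*}\subset\widehat N_{\varepsilon_k r}^c(z;\Gamma)$, is false. To pass from $\limsup\le\varepsilon$ to $\limsup\le\varepsilon'\|z^*\|_{Z^*}$ you need a \emph{lower} bound on $\|z^*\|_{Z^*}$, not an upper bound: if $\|z^*\|_{Z^*}\ge M>0$ then $\limsup\le\varepsilon\le\frac{\varepsilon}{M}\|z^*\|_{Z^*}$, so $z^*\in\widehat N_{\varepsilon/M}^c$. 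An upper bound gives nothing, because for small $\|z^*\|_{Z^*}$ the constraint $\limsup\le\varepsilon'\|z^*\|_{Z^*}$ is \emph{tighter} than $\limsup\le\varepsilon$. Concretely, take $\Gamma=[0,\infty)\subset\mathbb R$ and $z=0$: then $\widehat N_\varepsilon(0;\Gamma)=(-\infty,\varepsilon]$ while $\widehat N_{\varepsilon'}^c(0;\Gamma)=(-\infty,0]$ for every $\varepsilon'<1$, so $z^*:=\varepsilon/2$ lies in $\widehat N_\varepsilon(0;\Gamma)\cap rB$ for any $r\ge\varepsilon/2$ but lies in no $\widehat N_{\varepsilon'}^c(0;\Gamma)$ with $\varepsilon'<1$.

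The fix, and the route the paper actually takes, is a dichotomy on the dual sequence rather than a single norm bound: if $(x_k^*,-y_k^*)\to(0,0)$ along a subsequence, then $(x^*,y^*)=(0,0)$ and the membership $x^*\in D_\Omega^{c*}S(\bar x|\bar y)(y^*)$ is immediate; otherwise you may pass to a subsequence on which $\|(x_k^*,-y_k^*)\|_{X^*\times Y^*}\ge M>0$ for some fixed $M$, and the lower bound gives $(x_k^*,-y_k^*)\in\widehat N_{\varepsilon_k/M}^c\big((x_k,y_k);\operatorname{gph}S|_\Omega\big)$ with $\varepsilon_k/M\downarrow 0$. The intersection with $J\big(T(x_k;\Omega)\big)\times Y^*$ passes through unchanged, as you observe, and the same correction applies verbatim in part (ii). Your handling of the constant-$y^*$ appearing in the statement is reasonable, but that cosmetic point is orthogonal to the real obstacle, which is the direction of the needed norm bound.
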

\begin{proof} To verify (i), consider the set
\begin{equation}\label{K}
\begin{array}{ll}
K:=\big\{&x^*\in X^*~\big|~\operatorname{there~exist}~\varepsilon_k\downarrow0, (x_k,y_k)\stackrel{\operatorname{gph} S|_\Omega}{\longrightarrow}(\bar x,\bar y),\\
&\operatorname{and}~(x_k^*,-y_k^*)\in\widehat N_{\varepsilon_k}\big((x_k,y_k);\operatorname{gph} S|_M\big)\cap\big[J\big(T(x_k;\Omega)\big)\times Y^*\big]\\
&\operatorname{such~that}y_k^*\rightharpoonup^* y^*\;\mbox{ and }\;x_k^*\rightharpoonup^* x^*\big\}.
\end{array}
\end{equation}
Pick any $x^*\in D_\Omega^{c*}S(\bar x|\bar y)(y^*)$ and find by definition \eqref{mixedcoder} sequences 
\begin{equation}\label{equal1}
\varepsilon_k\downarrow0,~~ (x_k,y_k)\stackrel{\operatorname{gph} S|_\Omega}{\longrightarrow}(\bar x,\bar y),\;\mbox{ and }
\end{equation}
\begin{equation}\label{equal2}
(x_k^*,-y_k^*)\in\widehat N_{\varepsilon_k}^c\big((x_k,y_k);\operatorname{gph} S|_\Omega\big)\cap\big[J\big(T(x_k;\Omega)\big)\times Y^*\big]
\end{equation}
for which we have the convergence
\begin{equation}\label{equal3}
y_k^*\to y^*~~ \operatorname{and}~~ x_k^*\rightharpoonup^* x^*\;\mbox{ as }\;k\to\infty.
\end{equation}
It follows from \eqref{equal3} by the uniform boundedness principle that there is $L>0$ with
$$
\|(x_k^*,-y_k^*)\|_{X^*\times Y^*}\leq L\;\mbox{ for all }\; k\in\N.
$$
This implies therefore by definition \eqref{e-cone} that 
$$
\limsup\limits_{(x,y)\stackrel{\operatorname{gph} S|_\Omega}{\longrightarrow}(x_k,y_k)}\frac{\langle(x_k^*,-y_k^*),(x,y)-(x_k,y_k)\rangle}{\|(x,y)-(x_k,y_k)\|_{X\times Y}}\leq\varepsilon_k\|(x_k^*,-y_k^*)\|_{X^*\times Y^*}\leq L\varepsilon_k.
$$
Using the latter together with \eqref{equal2} ensures that
\begin{equation}\label{equal4}
(x_k^*,-y_k^*)\in\widehat N_{L\varepsilon_k}\big((x_k,y_k);\operatorname{gph} S|_\Omega\big)\cap\big[J\big(T(x_k;\Omega)\big)\times Y^*\big],\quad k\in\N.
\end{equation}
Combining now \eqref{K}, \eqref{equal1}, \eqref{equal3}, and \eqref{equal4} brings us to $x^*\in K$ and thus justifies the inclusion $D_\Omega^{c*}S(\bar x|\bar y)(y^*)\subset K$.\vspace*{0.02in}

To verify the opposite inclusion, fix $x^*\in K$ and find by definition \eqref{K} sequences
\begin{equation}\label{equal5}
\varepsilon_k\downarrow 0, ~~(x_k,y_k)\stackrel{\operatorname{gph} S|_\Omega}{\longrightarrow}(\bar x,\bar y),\;\mbox{ and }
\end{equation}
\begin{equation}\label{equal6}
(x_k^*,-y^*)\in\widehat N_{\varepsilon_k}\big((x_k,y_k);\operatorname{gph} S|_\Omega\big)\cap\big[J\big(T(x_k;\Omega)\big)\times Y^*\big]
\end{equation}
satisfying the convergence properties in \eqref{equal3}. In the case where $(x_k^*,-y_k^*)\to(0,0)$, we get $(x^*,y^*)=(0,0)$ and obviously have $x^*\in D_\Omega^{c*}S(\bar x|\bar y)(y^*)$. Thus assume without loss of generality that there exists $M>0$ such that
$$
\|(x_k^*,-y_k^*)\|_{X^*\times Y^*} \geq M\;\mbox{ for all }\;k\in\N,
$$
which tells us by \eqref{equal6} and definition \eqref{e-set} that
$$
\limsup\limits_{(x,y)\stackrel{\operatorname{gph} S|_\Omega}{\longrightarrow}(\bar x,\bar y)}\frac{\langle (x_k^*,-y_k^*),(x,y)-(x_k,y_k)\rangle}{\|(x,y)-(x_k,y_k)\|_{X\times Y}}\leq\varepsilon_k\leq\frac{\varepsilon_k}{M}\|(x_k^*,-y_k^*)\|_{X^*\times Y^*}.
$$
By \eqref{e-cone}, the latter implies that
$$
(x_k^*,-y_k^*)\in\widehat N^c_{\varepsilon_k/M}\big((x_k,y_k);\operatorname{gph} S|_\Omega\big).
$$
Employing \eqref{equal6} again brings us to the inclusion
\begin{equation}\label{equal8}
(x_k^*,-y_k^*)\in\widehat N^c_{\varepsilon_k/M}\big((x_k,y_k);\operatorname{gph} S|_\Omega\big)\cap\big[J\big(T(x_k;\Omega)\big)\times Y^*\big],
\end{equation}
which shows that $x^*\in D_\Omega^{c*}S(\bar x|\bar y)(y^*)$ by combining \eqref{equal3}, \eqref{equal5}, and \eqref{equal8}.

(ii) The proof of assertion (ii) is similar to (i).
\end{proof}

To proceed further with deriving pointbased characterizations of the relative Lipschitz-like property, we formulate the following {\it relative partial sequentially normal compactness} (relative PSNC) condition in the vein of \cite{Mordukhovich2006} for unconstrained multifunction and of \cite{Mordukhovich2023} for constrained ones. Note that the latter condition differs from the one given below by the usage of the $\ve$-normal set \eqref{e-set} instead  of the $\ve$-regular normal cone \eqref{e-cone}. It follows from Lemma~\ref{equal} that these two PSNC properties are in fact equivalent. Note that they are obviously fulfilled if $X$ is finite-dimensional.

\begin{definition}\label{psnc} Given a set-valued mapping $S\colon X\tto Y$ between Banach space and an nonempty set $\O\subset X$, it is said that $S$ has the {\sc relative PSNC property} with respect to $\O$ at $(\bar x,\bar y)\in\operatorname{gph}S|_\Omega$ if for any sequence $(\varepsilon_k,x_k,y_k,x_k^*,y_k^*)\in [0,\infty)\times\operatorname{gph} S|_\Omega\times X^*\times Y^*$ satisfying the relationships
\begin{equation}\label{PSNC}
\begin{aligned}
&\varepsilon_k\downarrow 0,\;(x_k,y_k)\to(\bar x,\bar y),~(x_k^*,-y_k^*)\in \widehat N_{\varepsilon_k}^c\big((x_k,y_k);\operatorname{gph} S|_\Omega\big)\cap[J\big(T(x_k;\Omega)\big)\times Y^*],\\
&x_k^*\rightharpoonup^*0,\;\operatorname{and}~y_k^*\to 0\;\mbox{ as }\;k\to\infty,
\end{aligned}
\end{equation}
we have the strong $X^*$-convergence $\|x_k^*\|_{X^*}\to 0$ as $k\to \infty$.
\end{definition}

The next theorem presents pointbased necessary conditions, sufficient conditions, and characterizations of the relative Lipschitz-like property of multifunctions with evaluations of the exact relative Lipschitzian bound at the point in question via the coderivative norms as positive homogeneous mappings \eqref{norm}. 
By taking into account Lemma~\ref{equal} and the discussions above, the characterization part can be derived from \cite[Theorem~3.6]{Mordukhovich2023}.

\begin{theorem}\label{criterion}
Let $\Omega\subset X$ and $(\bar x,\bar y)\in\operatorname{gph} S|_\Omega$, where $S:X\rightrightarrows Y$ be a set-valued mapping between reflexive Banach spaces. Consider the following statements:

{\bf(i)} $S$ has the Lipschitz-like property relative to $\Omega$ around $(\bar x,\bar y)$.

{\bf(ii)} $S$ is PSNC at $(\bar x, \bar y)$ relative to $\Omega$ and $\|D_\Omega^{c*}S(\bar x|\bar y)\|<\infty$.

{\bf(iii)} $S$ is PSNC at $(\bar x,\bar y)$ relative to $\Omega$ and $D_\Omega^{c*}S(\bar x|\bar y)(0)=\{0\}$.\\[0.5ex] 
Then we have implications {\rm(i)}$\Longrightarrow${\rm(ii)}$\Longrightarrow${\rm(iii)}. If in addition the set $\O$ is closed and convex and the set $\operatorname{gph} S$ is locally closed around $(\bar x,\bar y)$, then we also have implication  {\rm(iii)$\Longrightarrow$(i)}. Moreover, if in the latter setting $\dim X<\infty$, that the exact Lipschitzian bound of $S$ relative to $\Omega$ around $(\bar x,\bar y)$ satisfies the estimate
\begin{equation}\label{inequ-estimate}
\operatorname{lip}_\Omega S(\bar x,\bar y)\leq\|\widetilde D_\Omega^{c*}S(\bar x|\bar y)\|,
\end{equation}
while the lower bound estimate
\begin{equation}\label{est}
\|D_\Omega^{c*}S(\bar x|\bar y)\|\le\operatorname{lip}_\Omega S(\bar x,\bar y)
\end{equation}
holds for $S\colon X\tto Y$ between any reflexive Banach spaces $X$ and $Y$ relative to an arbitrary set $\O$. Finally, if we add to the above assumptions on the fulfillment of \eqref{inequ-estimate} that $S$ is coderivative normal relative to $\Omega$ at $(\bar x,\bar y)$, then 
\begin{equation}\label{equ-estimate}
\operatorname{lip}_\Omega S(\bar x,\bar y)=\|D_\Omega^{c*}S(\bar x|\bar y)\|= \|\widetilde D_\Omega^{c*}S(\bar x|\bar y)\|.
\end{equation}
\end{theorem}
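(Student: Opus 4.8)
\textbf{Proof strategy for Theorem~\ref{criterion}.}
The plan is to build on the three lemmas of Section~\ref{sec:chara} together with Lemma~\ref{equal}, passing from neighborhood conditions to the pointbased limiting constructions. First I would prove (i)$\Rightarrow$(ii). Assuming the relative Lipschitz-like property with some constant $\kappa$, Lemma~\ref{Aubin-TX0} gives $\delta>0$ so that \eqref{TX0=0} holds on a neighborhood; applying Lemma~\ref{Necessity} yields, for each $\kappa'>\kappa$, a $\gamma\in(0,1)$ such that the estimate \eqref{nece1} is valid at all $(x,y)\in\operatorname{gph}S|_\Omega$ near $(\bar x,\bar y)$. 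Now take any $x^*\in D_\Omega^{c*}S(\bar x|\bar y)(y^*)$: by Lemma~\ref{equal}(i) there are sequences $\varepsilon_k\downarrow0$, $(x_k,y_k)\stackrel{\operatorname{gph}S|_\Omega}{\longrightarrow}(\bar x,\bar y)$, and $(x_k^*,-y^*)$ in the $\varepsilon_k$-regular normal cone intersected with $J(T(x_k;\Omega))\times Y^*$ with $x_k^*\rightharpoonup^*x^*$. For $k$ large, $\varepsilon_k<\gamma$ and $(x_k,y_k)$ lies in the good neighborhood, so \eqref{nece1} gives $\|x_k^*\|\le\kappa'\|y^*\|$; weak$^*$ lower semicontinuity of the norm then yields $\|x^*\|\le\kappa'\|y^*\|$, so $\|D_\Omega^{c*}S(\bar x|\bar y)\|\le\kappa'$ and, letting $\kappa'\downarrow\operatorname{lip}_\Omega S(\bar x,\bar y)$, the bound \eqref{est}. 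PSNC at $(\bar x,\bar y)$ follows from the same estimate: a PSNC-violating sequence as in \eqref{PSNC} would have $y_k^*\to0$ yet $\|x_k^*\|\not\to0$, and \eqref{nece1} forces $\|x_k^*\|\le\kappa'\|y_k^*\|\to0$, a contradiction. The implication (ii)$\Rightarrow$(iii) is immediate, since $\|D_\Omega^{c*}S(\bar x|\bar y)\|<\infty$ forces $D_\Omega^{c*}S(\bar x|\bar y)(0)=\{0\}$ by positive homogeneity.

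For (iii)$\Rightarrow$(i) under the closedness/convexity hypotheses, the argument is by contradiction: if $S$ is not relatively Lipschitz-like, then for every $\kappa\ge0$ and every $\gamma\in(0,1)$ and $\delta>0$, condition \eqref{nece1} must fail (the contrapositive of Lemma~\ref{Sufficiency}). Choosing sequences $\kappa_j\uparrow\infty$, $\gamma_j\downarrow0$, $\delta_j\downarrow0$, I extract points $(x_j,y_j)\to(\bar x,\bar y)$ in $\operatorname{gph}S|_\Omega$ and pairs $(x_j^*,y_j^*)$ in $\widehat N^c_{\gamma_j}((x_j,y_j);\operatorname{gph}S|_\Omega)\cap[J(T(x_j;\Omega))\times Y^*]$ with $\|x_j^*\|>\kappa_j\|y_j^*\|$. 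Normalizing so that $\|x_j^*\|=1$, reflexivity gives a weak$^*$-convergent subsequence $x_j^*\rightharpoonup^*x^*$ and forces $y_j^*\to0$. If $\|x^*\|>0$ this directly yields (via Lemma~\ref{equal}(i) and a diagonalization over the vanishing $\gamma_j$) a nonzero element of $D_\Omega^{c*}S(\bar x|\bar y)(0)$, contradicting (iii); if $x^*=0$ then the PSNC property forces $\|x_j^*\|\to0$, contradicting the normalization. For the exact-bound claims, the upper estimate \eqref{inequ-estimate} in finite dimensions follows by running the (iii)$\Rightarrow$(i) machinery quantitatively: for any $\kappa>\|\widetilde D_\Omega^{c*}S(\bar x|\bar y)\|$, the failure of \eqref{nece1} for all small $\del,\gamma$ would produce, after normalization and passage to the limit (in finite dimensions weak$^*$ and strong limits coincide for the $X$-component, so the limiting pair lands in the \emph{normal} coderivative), an element of $\widetilde D_\Omega^{c*}S(\bar x|\bar y)$ of norm at least $1/\sqrt{1/\gamma^2-1}\cdot(\dots)>\kappa$ in the limit — contradiction; hence $\operatorname{lip}_\Omega S(\bar x,\bar y)\le\kappa$ for all such $\kappa$.

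Finally, \eqref{equ-estimate} follows by chaining the two-sided estimates: \eqref{est} gives $\|D_\Omega^{c*}S(\bar x|\bar y)\|\le\operatorname{lip}_\Omega S(\bar x,\bar y)$; \eqref{inequ-estimate} gives $\operatorname{lip}_\Omega S(\bar x,\bar y)\le\|\widetilde D_\Omega^{c*}S(\bar x|\bar y)\|$; and the coderivative normality hypothesis \eqref{psnc-like} — i.e.\ $\|D^{c*}_\Omega S(\bar x,\bar y)\|=\|\widetilde D^{c*}_\Omega S(\bar x,\bar y)\|$ from Definition~\ref{coderivatives}(iv) — closes the loop, since $\|D_\Omega^{c*}S\|\le\|\widetilde D_\Omega^{c*}S\|$ always holds (the mixed coderivative uses strong convergence of $y_k^*$, hence is contained in the normal one). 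Combining, all three quantities coincide.

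I expect the main obstacle to be the (iii)$\Rightarrow$(i) step and the accompanying exact-bound estimate \eqref{inequ-estimate}: one must verify that the hypotheses of Lemma~\ref{Sufficiency} — in particular that the failure of \eqref{nece1} can be arranged \emph{simultaneously} for a suitable $\gamma\in(0,1)$, not merely for arbitrarily small $\gamma$ — are correctly negated, and then that the diagonal extraction over $(\varepsilon_k,\gamma_k,\delta_k)$ and the normalization genuinely deliver a limiting codifferential pair in the correct construction ($D_\Omega^{c*}$ for the characterization, $\widetilde D_\Omega^{c*}$ for the upper bound). The interplay between the parameter $\gamma$ in \eqref{nece1} and the relation $\kappa=\sqrt{1/\gamma^2-1}$ from Lemma~\ref{Necessity} must be tracked carefully so that the constant produced in the limit matches $\|\widetilde D_\Omega^{c*}S(\bar x|\bar y)\|$ exactly; the PSNC property is exactly what rules out the degenerate case $x^*=0$ after normalization, and its equivalence (via Lemma~\ref{equal}) with the $\varepsilon$-normal-set version of PSNC is what lets us quote \cite[Theorem~3.6]{Mordukhovich2023} for the characterization part if a fully self-contained argument proves too long.
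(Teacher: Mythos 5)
Your proposal follows the paper's proof step for step: (i)$\Rightarrow$(ii) and the lower estimate \eqref{est} come from Lemmas~\ref{Aubin-TX0} and \ref{Necessity} applied to the defining sequences in $D_\Omega^{c*}S$ and to a PSNC-violating sequence, followed by weak$^*$ lower semicontinuity of the norm; (ii)$\Rightarrow$(iii) is positive homogeneity; (iii)$\Rightarrow$(i) is by contrapositive of Lemma~\ref{Sufficiency} together with normalization, weak$^*$ compactness from reflexivity, the coderivative condition to force the weak$^*$ limit to zero, and relative PSNC to upgrade to strong convergence; \eqref{inequ-estimate} is the same contradiction argument run quantitatively for $\kappa>\|\widetilde D_\Omega^{c*}S(\bar x|\bar y)\|$, where $\dim X<\infty$ gives strong convergence of the $x$-components so the limit lands in the normal coderivative; \eqref{equ-estimate} chains \eqref{est}, \eqref{inequ-estimate}, and coderivative normality. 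The only loose point is the garbled expression ``$1/\sqrt{1/\gamma^2-1}\cdot(\dots)>\kappa$'': the clean statement is that normalizing $\|x_k^*\|_{X^*}=1$ forces $\|y_k^*\|_{Y^*}<1/\kappa$, hence in the limit $\|y^*\|_{Y^*}\le\kappa^{-1}=\kappa^{-1}\|x^*\|_{X^*}$ with $x^*\in\widetilde D_\Omega^{c*}S(\bar x|\bar y)(y^*)$, which contradicts $\kappa>\|\widetilde D_\Omega^{c*}S(\bar x|\bar y)\|$; this is exactly the paper's step. Also, invoking Lemma~\ref{equal}(i) is harmless but not needed here since the coderivative is defined directly via the $\varepsilon$-regular normal cone appearing in \eqref{nece1}.
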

\begin{proof}
First we simultaneously verify implication (i)$\Longrightarrow$(ii) and the lower estimate \eqref{est} of the exact Lipschitzian bound in the general setting of the theorem. Assume that $S$ enjoys the Lipschitz-like property relative to $\Omega$ around $(\bar x,\bar y)$ with constant $\kappa$ and take a sequence $(\varepsilon_k,x_k,y_k,x_k^*,y_k^*)\in (0,1)\times\operatorname{gph} S|_\Omega\times X^*\times X^*\times Y^*$ satisfying \eqref{PSNC}. It follows from Lemma~\ref{Necessity} that $\|(x_k^*)\|_{X^*}\leq(\kappa+1)\|y_k^*\|_{Y^*}$ for large $k\in\N$, and hence $x_k^*\to 0$ as $k\to\infty$, which yields the PSNC property of $S$ at $(\bar x,\bar y)$ relative to $\Omega$.

Further, pick any $(x^*, y^*)\in X^*\times Y^*$ with $x^*\in D_\Omega^*S(\bar x|\bar y)(y_*)$ and find by the coderivative definition such sequences $\varepsilon_k\dn 0$, $(x_k,y_k)\stackrel{\operatorname{gph} S|_\Omega}{\longrightarrow}(\bar x,\bar y)$, and $(x_k^*,-y_k^*)\in\widehat N^c_{\varepsilon_k}\big((x_k,y_k);\operatorname{gph} S|_\Omega\big)$ that $y_k^*\to y^*$ and $x_k^*\rightharpoonup^* x^*$ as $k\to\infty$. Denoting $\kappa_k:=\kappa+k^{-1}$, it follows from Lemma~\ref{Necessity} that $\|x_k^*\|_{X^*}\leq\kappa_k\|y_k^*\|_{Y^*}$ when $k$ is sufficiently large. Therefore,
$$
\|x^*\|_{X^*}\leq\liminf\limits_{k\to\infty}\|x_k^*\|_{X_k^*}\leq\liminf\limits_{k\to\infty}\kappa_k\|y_k^*\|_{Y^*}=\kappa\|y^*\|_{Y^*},
$$
which tells us that $\|D_\Omega^*S(\bar x|\bar y)\|\leq\kappa$ and thus verifies (ii) together with the lower estimate of the exact Lipschitzian bound in \eqref{est}. Implication (ii)$\Longrightarrow$(iii) is obvious.

Next we proceed with the justification of (iii)$\Longrightarrow$(i) under the additional assumptions made. Arguing by contradiction, suppose that $S$ does not have the Lipschitz-like property relative to $\Omega$ around $(\bar x,\bar y)$. It follows from Lemma~\ref{Sufficiency} that there exist $\varepsilon_k\dn 0$, $(x_k,y_k)\stackrel{\operatorname{gph} S|_\Omega}{\longrightarrow}(\bar x,\bar y)$, and $(x_k^*,-y_k^*)\in\widehat N^c_{\varepsilon_k}\big((x_k,y_k);\operatorname{gph} S|_\Omega\big)$ for which the estimate
$$
\|x_k^*\|_{X^*}>k\|y_k^*\|_{Y^*}\;\mbox{ for all }\;k\in\N
$$
is satisfied. We may assume without generality that $\|x_k^*\|_{X^*}=1$ and hence get that $y_k^*\to 0$ as $k\to\infty$. It follows from the reflexivity of $X$ that the dual ball $B_{X^*}$ is weak$^*$ compact in $X^*$, and hence there exists $x^*\in X^*$ such that $x_k^*\rightharpoonup^* x^*$ along a subsequence. Applying the condition $D_\Omega^*(\bar x|\bar y)(0)=\{0\}$ gives us $x^*=0$, i.e., $x_k^*\rightharpoonup^*0$. By the imposed PSNC property of $S$ at $(\bar x,\bar x)$ relative to $\Omega$, this implies that $x_k^*\to 0$ as $k\to\infty$ along the selected subsequence, which contradicts $\|x_k^*\|_{X^*}=1$ and thus gives us (i).

It remains to verify the upper bound estimate assuming in addition that $\dim X<\infty$. Fix any $\kappa>\|\widetilde D_\Omega^*S(\bar x|\bar y)\|$ and show that we can find $\delta>0$ so that \eqref{nece1} holds for any small $\varepsilon>0$. Supposing the contrary yields the existence of sequences $\varepsilon_k\downarrow0$, $(x_k,y_k)\stackrel{\operatorname{gph} S|_\Omega}{\longrightarrow}(\bar x,\bar y)$ as $k\to\infty$, and
$$
(x_k^*,y_k^*)\in\widehat N^c_{\varepsilon_k}\big((x_k,y_k);\operatorname{gph} S|_\Omega\big)\cap\big[J\big(T(x_k;\Omega)\big)\times Y^*\big]
$$
along which $\|x_k^*\|_{X^*}>\kappa\|y_k^*\|_{Y_*}$ whenever $k\in\N$. We may assume that $\{x_k^*\}$ is bounded in $X$. Since $\dim X<\infty$ and $Y$ is reflexive, there is a pair $(x^*,y^*)\in X^*\times Y^*$ such that $ x_k^*\to x^*$ and $y_k^*\rightharpoonup^*y^*$ along some subsequences. Remembering that the norm function is continuous in finite dimensions and lower semicontinuous in the weak$^*$ topology of duals to Banach spaces, we get $x^*\in\widetilde D_\Omega^*S(\bar x|\bar y)(y^*)$ and
$$
\|y^*\|_{Y^*}\leq\liminf\limits_{k\to\infty}\|y_k^*\|_{Y^*}
\leq\kappa^{-1}=\kappa^{-1}\|x^*\|_{X^*},
$$
which contradicts the condition $\kappa>\|\widetilde D_\Omega^*S(\bar x|\bar y)\|$ and thus verifies \eqref{inequ-estimate}. Equality \eqref{equ-estimate} is a consequence of \eqref{inequ-estimate}, \eqref{est}, and the assumed coderivative normality.
\end{proof}\vspace*{-0.15in}

\section{Chain Rules for Conic Contingent Coderivatives}\label{sec:chain}

In this section, we show that both conic mixed and normal contingent coderivatives from Definition~\ref{coderivatives}(ii,iii)  for constrained multifunctions between reflexive Banach spaces
satisfy {\it pointbased chain rules} under a mild qualification condition. This qualification condition is expressed in terms of the relative mixed  contingent coderivative \eqref{mixedcoder} and holds, in particular, under the relative well-posedness properties of the multifunctions in question due to the coderivative characterizations of Section~\ref{sec:point}.
Since chain rules have never been developed for coderivative constructions from \cite{Mordukhovich2023}, the obtained results are also new for those constructions. The proofs below mostly revolve around the (approximate) {\it extremal principle} of variational analysis; see \cite{Mordukhovich2006}.\vspace*{0.02in} 

We begin with evaluating the $\varepsilon$-regular normal cone to inverse images of sets under single-valued continuous mappings that holds in any Banach spaces.

\begin{lemma}\label{F-1}
Let $f:X\to Y$ be a single-valued continuous mapping between Banach spaces, and let $\Theta$ be a nonempty subset of $Y$. Take some $\bar x\in X$ and denote $\bar y:=f(\bar x)\in\Theta$. Then for any $\varepsilon>0$, we have the inclusion
$$
\widehat N_\varepsilon^c\big(\bar x;f^{-1}(\Theta)\big)\subset\big\{x^*\in X^*~\big|~(x^*,0)\in\widehat N_\varepsilon^c\big((\bar x,\bar y);(X\times\Theta)\cap\operatorname{gph}f\big)\big\}.
$$
\end{lemma}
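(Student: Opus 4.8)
The plan is to unwind the definition of the $\varepsilon$-regular normal cone on both sides and exploit the fact that the map $x\mapsto(x,f(x))$ sets up a natural correspondence between points of $f^{-1}(\Theta)$ near $\bar x$ and points of $(X\times\Theta)\cap\gph f$ near $(\bar x,\bar y)$. Concretely, fix $x^*\in\widehat N_\varepsilon^c(\bar x;f^{-1}(\Theta))$; this means
$$
\limsup_{x\stackrel{f^{-1}(\Theta)}{\longrightarrow}\bar x}\frac{\langle x^*,x-\bar x\rangle}{\|x-\bar x\|_X}\le\varepsilon\|x^*\|_{X^*}.
$$
I want to show that the functional $(x^*,0)\in X^*\times Y^*$ lies in $\widehat N_\varepsilon^c((\bar x,\bar y);(X\times\Theta)\cap\gph f)$, i.e.
$$
\limsup_{(x,y)\stackrel{(X\times\Theta)\cap\gph f}{\longrightarrow}(\bar x,\bar y)}\frac{\langle(x^*,0),(x,y)-(\bar x,\bar y)\rangle}{\|(x,y)-(\bar x,\bar y)\|_{X\times Y}}\le\varepsilon\|(x^*,0)\|_{X^*\times Y^*}=\varepsilon\|x^*\|_{X^*}.
$$
The key observation is that any $(x,y)\in(X\times\Theta)\cap\gph f$ has $y=f(x)$ with $f(x)\in\Theta$, hence $x\in f^{-1}(\Theta)$; and the numerator $\langle(x^*,0),(x,y)-(\bar x,\bar y)\rangle$ equals $\langle x^*,x-\bar x\rangle$ exactly, since the $Y^*$-component of $(x^*,0)$ is zero.

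The only subtlety is the denominator: on the product side it is $\|(x,y)-(\bar x,\bar y)\|_{X\times Y}=\sqrt{\|x-\bar x\|_X^2+\|f(x)-\bar y\|_Y^2}$ (using the Euclidean product norm adopted in the paper), which is at least $\|x-\bar x\|_X$. Thus for each such pair with $x\neq\bar x$,
$$
\frac{\langle(x^*,0),(x,y)-(\bar x,\bar y)\rangle}{\|(x,y)-(\bar x,\bar y)\|_{X\times Y}}=\frac{\langle x^*,x-\bar x\rangle}{\sqrt{\|x-\bar x\|_X^2+\|f(x)-\bar y\|_Y^2}}\le\frac{(\langle x^*,x-\bar x\rangle)_+}{\|x-\bar x\|_X},
$$
where $(\cdot)_+$ denotes positive part; when the numerator is nonpositive the inequality is trivial since the right-hand side is $\ge 0$, and when it is positive we simply drop the nonnegative term $\|f(x)-\bar y\|_Y^2$ under the square root. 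Next I would note that as $(x,y)\stackrel{(X\times\Theta)\cap\gph f}{\longrightarrow}(\bar x,\bar y)$, the first coordinate $x$ runs through a net in $f^{-1}(\Theta)$ converging to $\bar x$ (with possibly $x=\bar x$, a case contributing nothing to a $\limsup$ of a ratio over $x\neq\bar x$), and conversely continuity of $f$ guarantees that any sequence $x_k\stackrel{f^{-1}(\Theta)}{\longrightarrow}\bar x$ lifts to $(x_k,f(x_k))\stackrel{(X\times\Theta)\cap\gph f}{\longrightarrow}(\bar x,\bar y)$. Taking $\limsup$ on both sides and invoking the defining inequality for $x^*$ gives
$$
\limsup_{(x,y)\stackrel{(X\times\Theta)\cap\gph f}{\longrightarrow}(\bar x,\bar y)}\frac{\langle(x^*,0),(x,y)-(\bar x,\bar y)\rangle}{\|(x,y)-(\bar x,\bar y)\|_{X\times Y}}\le\limsup_{x\stackrel{f^{-1}(\Theta)}{\longrightarrow}\bar x}\frac{\langle x^*,x-\bar x\rangle}{\|x-\bar x\|_X}\le\varepsilon\|x^*\|_{X^*},
$$
which is exactly the membership $(x^*,0)\in\widehat N_\varepsilon^c((\bar x,\bar y);(X\times\Theta)\cap\gph f)$, completing the argument.

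I do not expect a genuinely hard step here: the whole content is the elementary inequality $\|(x,y)-(\bar x,\bar y)\|_{X\times Y}\ge\|x-\bar x\|_X$ combined with the vanishing of the dual variable in the $Y$ slot, so that passing from $f^{-1}(\Theta)$ to $(X\times\Theta)\cap\gph f$ only shrinks the difference quotients. The mild point to be careful about — and the one I would state explicitly — is that continuity of $f$ is used to ensure the two $\limsup$'s are taken over "the same" limiting process (so that the reduction is not vacuous in the degenerate direction), and that the $\limsup$ ignores the point $x=\bar x$. Continuity also ensures $\bar y=f(\bar x)$ is genuinely a limit point of $\Theta$-values exactly when $\bar x$ is a limit point of $f^{-1}(\Theta)$, so no spurious behaviour arises. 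If $\bar x$ is isolated in $f^{-1}(\Theta)$, both sides of the claimed inclusion are trivially all of $X^*$ (resp.\ contain $(x^*,0)$ vacuously), so the statement holds in that case too.
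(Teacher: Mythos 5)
Your proof is correct and follows essentially the same route as the paper's: unwind the definition of $\widehat N_\varepsilon^c$, identify points of $(X\times\Theta)\cap\gph f$ near $(\bar x,\bar y)$ with points $x\in f^{-1}(\Theta)$ near $\bar x$ via continuity of $f$, and observe that the product-norm denominator dominates $\|x-\bar x\|_X$ while the $Y^*$-slot of $(x^*,0)$ contributes nothing to the numerator. Your explicit use of the positive part to handle sign issues even patches a small imprecision in the paper's intermediate pointwise inequality, but this is a cosmetic refinement of the same argument rather than a different approach.
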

\begin{proof} Fix $x^*\in\widehat N_\varepsilon^c(\bar x;f^{-1}(\Theta))$ and get by
Definition \eqref{e-cone} of the $\ve$-regular normals that 
$$
\limsup\limits_{x\stackrel{f^{-1}(\Theta)}{\longrightarrow}\bar x}\frac{\langle x^*,x-\bar x\rangle}{\|x-\bar x\|_X}\le\varepsilon\|x^*\|_{X^*}.
$$
The continuity of $f$ and the convergence $x\stackrel{f^{-1}(\Theta)}{\longrightarrow}\bar x$ yields $f(x)\stackrel{\Theta}{\longrightarrow} f(\bar x)=\bar y$. Thus
$$
\begin{aligned}
\limsup\limits_{(x,y)\stackrel{[X\times\Theta]\cap\operatorname{gph}f}{\longrightarrow}(\bar x,\bar y)}\frac{\langle(x^*,0),(x,y)-(\bar x,\bar y)\rangle}{\|(x,y)-(\bar x,\bar y)\|_{X\times Y}}
\leq&\limsup\limits_{x\stackrel{f^{-1}(\Theta)}{\longrightarrow}\bar x}\frac{\langle (x^*,0),(x,f(x))-(\bar x,f(\bar x))\rangle}
{\sqrt{\|x-\bar x\|_X^2+\|f(x)-f(\bar x)\|_Y^2}}\\
\le&\limsup\limits_{x\stackrel{f^{-1}(\Theta)}{\longrightarrow}\bar x}\frac{\langle x^*,x-\bar x\rangle}{\|x-\bar x\|_X}
\leq\varepsilon\|x^*\|_{X^*},
\end{aligned}
$$
which yields $(x^*,0)\in\widehat N_\varepsilon^c\big((\bar x,\bar y);(X\times\Theta)\cap\operatorname{gph}f\big)$ and verifies the claimed inclusion.
\end{proof}

Next we recall the notions of inner semicontinuity and inner semicompactness for set-valued mappings relative to sets. These notions are extensions of those from \cite{Mordukhovich2006} to the case of constrained multifunctions.

\begin{definition}\label{inner-semi}
Let $S:X\rightrightarrows Y$ be a set-valued mapping between Banach spaces, $\Omega\subset X$, $\bar x\in \operatorname{dom}S\cap \Omega$, and $(\bar x,\bar y) \in \operatorname{gph}S|_\Omega$. Then we say that:
\begin{description}
\item[(i)] $S$ is {\sc inner semicontinuous relative to} $\Omega$ at $(\bar x,\bar y)$ if for every sequence $x_k\stackrel{\operatorname{dom}S\cap \Omega}{\longrightarrow}\bar x$ there exists $y_k\in S(x_k)$ converging to $\bar y$ as $k\to\infty$.
\item[(ii)] We say that $S$ is {\sc inner semicompact relative to} $\Omega$ at $(\bar x,\bar y)$ if for every sequence $x_k\stackrel{\operatorname{dom} S\cap\Omega}{\longrightarrow}\bar x$ there exists $y_k\in S(x_k)$ that contains a convergent subsequence.
\end{description}
\end{definition}

To establish general chain rules for the relative contingent coderivatives from Definition~\ref{coderivatives}(ii,iii), we now introduce an auxiliary coderivative construction and a related PSNC property used in deriving the desired results.

\begin{definition}\label{mirrorcc}
Let $S:X\rightrightarrows Y$ be a set-valued mapping between Banach spaces, and let $\Theta$ be a nonempty subset of $Y$. Denote ${\mathfrak F}(x):=S(x)\cap\Theta$ and fix $(\bar x,\bar y)\in \operatorname{gph}\mathfrak{F}$. Then:

{\bf(i)} The {\sc complete mirror contingent coderivative} of $S$ {\sc relative to} $\Theta$ at $(\bar x,\bar y)$ is the multifunction $D_\Theta^{*-1}S(\bar x|\bar y):Y^*\rightrightarrows X^*$ meaning that $x^*\in D_\Theta^{*-1}S(\bar x|\bar y)(y^*)$ if and only if there exist sequences $\varepsilon_k\downarrow 0$, $(x_k,y_k)\stackrel{\operatorname{gph} \mathfrak F}{\longrightarrow}(\bar y,\bar x)$ as $k\to\infty$, and
$$
(x_k^*,-y_k^*)\in\widehat N_{\varepsilon_k}^c\big((x_k,y_k);\operatorname{gph} \mathfrak F\big)\cap\big[X^*\times J\big(T(y_k;\Theta)\big)\big],\quad k\in\mathbb N,
$$
such that $x_k^*\rightharpoonup^* x^*$ and $y_k^*\to y^*$ as $k\to\infty$.

{\bf(ii)} We say that $S$ is {\sc completely mirror PSNC relative to} $\Omega$ at $(\bar x,\bar y)\in\operatorname{gph}S|_\Omega$ if for any sequence $(\varepsilon_k,x_k,y_k,x_k^*,y_k^*)\in (0,1)\times\operatorname{gph} \mathfrak F\times X^*\times Y^*$ satisfying
$$
\begin{aligned}
&\varepsilon_k\downarrow0,(x_k,y_k)\to(\bar x,\bar y),~(x_k^*,-y_k^*)\in \widehat N_{\varepsilon_k}^c\big((x_k,y_k);\operatorname{gph} \mathfrak F\big)\cap\big[X^*\times J\big(T(y_k;\Theta)\big)\big],\\
&x_k^*\rightharpoonup^*0,\;\operatorname{and}~y_k^*\to 0\;\mbox{ as }\;k\to\infty,
\end{aligned}
$$
the strong $X^*$-convergence $\|x_k^*\|_{X^*}\to 0$ as $k\to \infty$ holds.
\end{definition}

Here are our main {\it chain rules} for {\it both relative contingent coderivatives} from Definition~\ref{coderivatives}(ii,iii) that are formulated via the refined constructions from Definition~\ref{mirrorcc}. For brevity, we confine ourselves to the case of the inner semicontinuity of the auxiliary mapping in Theorem~\ref{chainrule}(i), while both properties from Definition~\ref{inner-semi} are used in the coderivative sum rules derived in the next section.

\begin{theorem}\label{chainrule}
Let $S_1:X\rightrightarrows Y$ and $S_2:Y\rightrightarrows Z$ be set-valued mappings between reflexive Banach spaces, and let $\Omega$ be a closed and convex subset of $X$. Consider the composition $S:=S_2\circ S_1\colon X\rightrightarrows Z$ and define the auxiliary multifunction $G\colon X\times Z\rightrightarrows Y$ by
\begin{equation}\label{G-semi}
G(x,z):=S_1(x)\cap S_2^{-1}(z)=\big\{y\in S_1(x)~\big|~z\in S_2(y)\big\}\;\mbox{ for }\;x\in X,\;z \in Z.
\end{equation}
Fixing $(\bar x,\bar z)\in\operatorname{gph}S|_\Omega$ and $\bar y\in G(\bar x,\bar z)$, suppose that $\Omega$ is convex and locally closed around $\bar x$, that the graphs of $S_1$ and $S_2$ are locally closed around $(\bar x,\bar y)$ and
$(\bar y,\bar z)$, respectively, and that $G$ is inner semicontinuous relative to $\Omega\times Z$ at $(\bar x,\bar z,\bar y)$. Then the following assertions are satisfied:

{\bf(i)} Assume that either $S_2$ is PSNC at $(\bar y,\bar z)$ or $S_1^{-1}$ is completely mirror PSNC relative to $\Omega$ at $(\bar y,\bar x)$, and that the relative mixed coderivative qualification condition
\begin{equation}\label{mixedQC1}
D^*S_2(\bar y|\bar z)(0)\cap\big(-D_\Omega^{*-1}S_1^{-1}(\bar y|\bar x)(0)\big)=\{0\}
\end{equation}
holds. Then for all $z^*\in Z^*$ we have the inclusions
\begin{equation}\label{chainrule1}
D_\Omega^{c*}S(\bar x|\bar z)(z^*)\subset \widetilde D_\Omega^{c*} S_1(\bar x|\bar y)\circ D^*S_2(\bar y|\bar z)(z^*),
\end{equation}
\begin{equation}\label{chainrule2}
\widetilde D_\Omega^{c*}S(\bar x|\bar z)(z^*)\subset \widetilde D_\Omega^{c*}S_1(\bar x|\bar y)\circ \widetilde D^*S_2(\bar y|\bar z)(z^*).
\end{equation}

{\bf(ii)} If $S_2:=f\colon Y\to Z$ is single-valued and strictly differentiable at $\bar y$, then we have the enhanced inclusion for the relative mixed contingent coderivative:
\begin{equation}\label{chainrule5}
D_\Omega^{c*}S(\bar x|\bar z)(z^*)\subset D_\Omega^{c*}S_1(\bar x|\bar y)\big(\nabla f(\bar y)^*z^*)\big)\;\mbox{ whenever }\;z^*\in Z^*.
\end{equation}
\end{theorem}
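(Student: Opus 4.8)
The strategy is to reduce the composition $S = S_2 \circ S_1$ to an intersection of set-systems and then apply the relative extremal principle / fuzzy intersection rule announced in Section~\ref{sec:chain}, combined with the structural lemmas already in hand. The first step is to fix $z^* \in Z^*$ and pick $x^* \in D_\Omega^{c*}S(\bar x|\bar z)(z^*)$, which by Lemma~\ref{equal}(i) gives sequences $\varepsilon_k \downarrow 0$, $(x_k, z_k) \stackrel{\operatorname{gph}S|_\Omega}{\longrightarrow}(\bar x, \bar z)$, dual pairs $(x_k^*, -z_k^*) \in \widehat N_{\varepsilon_k}((x_k,z_k);\operatorname{gph}S|_\Omega) \cap [J(T(x_k;\Omega))\times Z^*]$ with $z_k^* \to z^*$ and $x_k^* \rightharpoonup^* x^*$. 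Using the inner semicontinuity of $G$ relative to $\Omega\times Z$ at $(\bar x,\bar z,\bar y)$, choose $y_k \in G(x_k, z_k)$ with $y_k \to \bar y$; then $(x_k, y_k) \in \operatorname{gph}S_1|_\Omega$ and $(y_k, z_k)\in\operatorname{gph}S_2$. The key geometric observation is that $\operatorname{gph}S|_\Omega$ near $(\bar x,\bar z)$ is the image, under the projection $(x,y,z)\mapsto(x,z)$, of the intersection of the two sets $\Xi_1 := \operatorname{gph}S_1|_\Omega \times Z$ and $\Xi_2 := X\times\operatorname{gph}S_2$ inside $X\times Y\times Z$, so the $\varepsilon$-regular normal $(x_k^*, 0, -z_k^*)$ to this intersection at $(x_k,y_k,z_k)$ can be split — up to a vanishing error — into a sum of $\varepsilon$-regular normals to $\Xi_1$ and $\Xi_2$ via the relative fuzzy intersection rule.

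The second step is to carry out that splitting carefully: apply the fuzzy intersection rule (the relative version flagged for Section~\ref{sec:chain}) at the points $(x_k,y_k,z_k)$ with tolerance going to zero, producing $(x_{1k}^*, y_{1k}^*, 0) \in \widehat N^c_{\delta_k}((x_k,y_k);\operatorname{gph}S_1|_\Omega)\times\{0\}$ (intersected with the appropriate $J(T(x_k;\Omega))$ slot) and $(0, y_{2k}^*, -z_k^*) \in X^*\times\widehat N^c_{\delta_k}((y_k,z_k);\operatorname{gph}S_2)$ such that $x_{1k}^* \to x_k^*$ in the weak$^*$ sense modulo a null perturbation, $y_{1k}^* + y_{2k}^* \to 0$, and the $z$-components match up. Writing $y_k^* := -y_{1k}^* \approx y_{2k}^*$, this produces candidates for the intermediate coderivative value: $x_k^* \in \widehat D^c_{\Omega,\delta_k}S_1(x_k|y_k)(y_k^*)$ approximately, and $y_k^* \in \widehat D_{\delta_k}S_2(y_k|z_k)(z_k^*)$ approximately. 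The qualification condition \eqref{mixedQC1} is exactly what is needed to guarantee that the sequence $\{y_k^*\}$ does not escape to infinity: if it did, after normalization one would extract (using weak$^*$ compactness in $Y^*$ from reflexivity, and the mixed-limit passage on the $S_2$ side and the mirror-limit passage on the $S_1^{-1}$ side) a nonzero $\bar w^* \in D^*S_2(\bar y|\bar z)(0) \cap (-D_\Omega^{*-1}S_1^{-1}(\bar y|\bar x)(0))$, contradicting \eqref{mixedQC1}. This boundedness argument, together with the assumed PSNC of $S_2$ at $(\bar y,\bar z)$ (or, alternatively, the complete mirror PSNC of $S_1^{-1}$ relative to $\Omega$), is what upgrades the weak$^*$ convergence on the intermediate variable to a genuine limiting construction and simultaneously controls the error terms from the fuzzy rule.

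The third step is the limit passage. With $\{y_k^*\}$ bounded, extract a weak$^*$-convergent subsequence $y_k^* \rightharpoonup^* \bar y^* \in Y^*$. On the $S_2$ side, since $z_k^* \to z^*$ strongly, the pair $(\bar y^*, z^*)$ lies in the \emph{mixed} coderivative $D^*S_2(\bar y|\bar z)(z^*)$ — giving \eqref{chainrule1} — whereas if we only had $z_k^* \rightharpoonup^* z^*$ we would land in the \emph{normal} coderivative $\widetilde D^*S_2$, giving \eqref{chainrule2} in the parallel argument starting from $\widetilde D_\Omega^{c*}S(\bar x|\bar z)(z^*)$. On the $S_1$ side, since we only control $y_k^* \rightharpoonup^* \bar y^*$ (weak$^*$), the limiting pair $(x^*, \bar y^*)$ lands in the \emph{normal} relative contingent coderivative $\widetilde D_\Omega^{c*}S_1(\bar x|\bar y)$ in both inclusions — this is why the intermediate construction on the $S_1$ factor must be $\widetilde D_\Omega^{c*}$ and cannot be the mixed one. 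For part (ii), when $S_2 = f$ is single-valued and strictly differentiable, $\operatorname{gph}S_2$ is locally a smooth manifold, $D^*S_2(\bar y|\bar z)(z^*) = \{\nabla f(\bar y)^* z^*\}$ is a singleton, $\widehat D_{\delta}S_2$ at nearby points is forced to have $y^*$ within $O(\delta)$ of $\nabla f(y)^* z^*$ by the differentiability estimate, and no qualification condition or PSNC is needed because $y_k^* = \nabla f(y_k)^* z_k^* + o(1)$ is automatically bounded and converges \emph{strongly} (by continuity of $\nabla f$ and strong convergence $z_k^* \to z^*$); hence the intermediate variable enters the \emph{mixed} coderivative $D_\Omega^{c*}S_1(\bar x|\bar y)$ and we obtain \eqref{chainrule5}.

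The main obstacle I anticipate is the bookkeeping in the second step: making the relative fuzzy intersection rule apply to the three-space intersection $\Xi_1 \cap \Xi_2$ when one of the constituent sets ($\operatorname{gph}S_1|_\Omega$) already carries the relative structure through $J(T(\cdot;\Omega))$, and tracking that the $J(T(x_k;\Omega))$-membership of the $x$-component survives the perturbation introduced by the fuzzy rule. Proposition~\ref{JT} is the tool that rescues this — it says $J(T(\cdot;\Omega))$ is, for closed convex $\Omega$, stable under small perturbations along $\Omega$, so an $\varepsilon_k B_{X^*}$-error in the $x$-slot can be absorbed. Verifying that the fuzzy-rule tolerances $\delta_k$ can be driven to zero faster than the $\varepsilon_k$ from the original coderivative sequence, and coordinating this with the PSNC convergence, is the delicate part; everything else is a relatively standard extremal-principle-plus-compactness argument in the spirit of \cite{Mordukhovich2006}.
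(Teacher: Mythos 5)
Your overall skeleton---reduce the composition graph to an intersection, split $\varepsilon$-regular normals via an extremal or intersection tool, use the qualification condition together with PSNC to prevent degeneration, and then pass to weak$^*$ limits with the strong/weak$^*$ dichotomy on the $z^*$-component deciding between \eqref{chainrule1} and \eqref{chainrule2}---matches the paper's logic, and your decomposition $C=\Xi_1\cap\Xi_2$ with $\Xi_1:=\operatorname{gph}S_1|_\Omega\times Z$ and $\Xi_2:=X\times\operatorname{gph}S_2$ is a legitimate restatement of the paper's $C=h^{-1}(Q)$. However, the tool you reach for creates a genuine gap: you want to invoke the relative fuzzy intersection rule, but in this paper that rule (Theorem~\ref{fuzzy}) is proved only for \emph{Hilbert} spaces, while Theorem~\ref{chainrule} is claimed for \emph{reflexive Banach} spaces. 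The obstruction is exactly what Remark~\ref{rem-conv} flags: the fuzzy-rule proof needs $J\big(T(u;\Omega)\big)$ to be convex, which holds when $J$ is the identity but can fail under a general reflexive renorming. The paper avoids this by never invoking Theorem~\ref{fuzzy} in the chain-rule proof; instead it embeds $C=h^{-1}(Q)$ into a seven-fold product as $\Omega_1\cap\Omega_2$ with $\Omega_1:=X\times Y\times Z\times Q$, $\Omega_2:=\operatorname{gph}h$, uses Lemma~\ref{F-1} plus the approximate intersection rule of \cite[Proposition~3.3(i)]{Mordukhovich2023} (which lives in reflexive spaces because it rests on the unconstrained approximate extremal principle), and absorbs the $\varepsilon_k B_{X^*}$ errors in the $J(T(\cdot;\Omega))$ slot via Proposition~\ref{JT} exactly as you anticipate. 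To run your route one would either have to prove a reflexive-space variant of the fuzzy rule for this particular intersection or weaken the chain rule to Hilbert spaces.

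A secondary inaccuracy concerns where the qualification condition does its work. You claim \eqref{mixedQC1} prevents $\{y_k^*\}$ from ``escaping to infinity'' and that after normalization you ``extract a nonzero $\bar w^*$.'' In the paper, boundedness of the intermediate duals already falls out of the extremal-principle norm estimate $\|\hat w_k^*\|\le 1+(1+\ell)\varepsilon_k$; what \eqref{mixedQC1} plus one of the PSNC hypotheses actually kills is the degenerate alternative $\lambda_k\to 0$. Moreover, ``extract a nonzero $\bar w^*$'' does not go through in infinite dimensions: a norm-one sequence may converge weak$^*$ to zero, and the PSNC assumption is precisely what converts that into a genuine contradiction (if the weak$^*$ limit is zero, PSNC forces strong convergence to zero, contradicting the norm lower bound $\geq 1/4$). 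You do list PSNC as an ``upgrade,'' but it is the load-bearing step of the contradiction; the qualification condition alone is not enough.
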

\begin{proof} To justify assertion (i), we just verify \eqref{chainrule1} while observing that the proof of \eqref{chainrule2} is similar and thus can be omitted. To proceed with \eqref{chainrule1}, pick any $x^*\in D_\Omega^{c*}S(\bar x|\bar z)(z^*)$ and find sequences
$\varepsilon_k\downarrow0$, $(x_k,z_k)\stackrel{\operatorname{gph} S|_\Omega}{\longrightarrow}(\bar x,\bar z)$, and
\begin{equation}\label{chain2}
(x_k^*,-z_k^*)\in\widehat N_{\varepsilon_k}^c\big((x_k,z_k);\operatorname{gph} S|_\Omega\big)\cap[J(T(x_k;\Omega)\big)\times Z^*], \quad k\in\mathbb N,
\end{equation}
along which we have the convergence $z_k^*\to z^*~~\operatorname{and}~~x_k^*\rightharpoonup^*x^*$ as $k\to\infty$.
Since $G$ is inner semicontinuous relative to $\Omega\times Z$ at $(\bar x,\bar y,\bar z)$, there are $y_k\in G(x_k,z_k)$ with $y_k\to \bar y$.

Let $C:=\{(x,y,z)\in X\times Y\times Z~|~y\in S_1|_\Omega(x),~z\in S_2(y)\}$.
It follows from \eqref{chain2} that
$$
\begin{aligned}
&\limsup\limits_{(x,y,z)\stackrel{C}{\longrightarrow}(x_k,y_k,z_k)}
\frac{\langle(x_k^*,0,-z_k^*),(x,y,z)-(x_k,y_k,z_k)\rangle}
{\|(x,y,z)-(x_k,y_k,z_k)\|_{X\times Y\times Z}}\\
=&\limsup\limits_{(x,y,z)\stackrel{C}{\longrightarrow}(x_k,y_k,z_k)}
\frac{\langle(x_k^*,-z_k^*),(x,z)-(x_k,z_k)\rangle}
{\sqrt{\|x-x_k\|_X^2+\|y-y_k\|_Y^2+\|z-z_k\|_Z^2}}\\
\leq&\limsup\limits_{(x,z)\stackrel{\operatorname{gph} S|_\Omega}{\longrightarrow}(x_k,z_k)}
\frac{\langle(x_k^*,-z_k^*),(x,z)-(x_k,z_k)\rangle}
{\sqrt{\|x-x_k\|_X^2+\|z-z_k\|_Z^2}}\\
&\leq\varepsilon_k\|(x_k^*,0,-z_k^*)\|_{X^*\times Y^*\times Z^*},
\end{aligned}
$$
which readily verifies the inclusion
\begin{equation*}
(x_k^*,0,-z_k^*)\in\widehat N^c_{\varepsilon_k}\big((x_k,y_k,z_k);C\big),\quad \mbox{for~all}~ k\in\mathbb N.
\end{equation*}
Let $Q:=\operatorname{gph} S_1|_\Omega\times\operatorname{gph} S_2$. Defining the single-valued continuous mapping $h\colon X\times Y\times Z\to X\times\times Y\times Y\times Z$ by $h(x,y,z):=(x,y,y,z)$, we have $C=h^{-1}(Q)$. Denote further
\begin{equation}\label{chain8}
\begin{aligned}
&\Omega_1:=X\times Y\times Z\times Q,~~\Omega_2:=\operatorname{gph}h,\;\mbox{ and}\\
&w_k:=(x_k,y_k,z_k,x_k,y_k,y_k,z_k),\;\;w_k^*:=(x_k^*,0,-z_k^*,0,0,0,0).
\end{aligned}
\end{equation}
Applying Lemma~\ref{F-1} to $h$ and $Q$ tells us that
\begin{equation}\label{chain9}
w_k^*\in\widehat N_{\varepsilon_k}^c(w_k;\Omega_1\cap\Omega_2),\quad k\in\mathbb N.
\end{equation}
It follows from (\ref{chain2}) and Proposition~\ref{JT} that there exists $\delta_k>0$ such that
\begin{equation}\label{chain10}
x_k^*\in J\big(T(x;\Omega)\big)+\varepsilon_k B_{X^*}\;\mbox{ for all }\;x\in \Omega\cap(x_k+\delta_k B_X).
\end{equation}
With the notation $\xi_k:={\rm min}\{\delta_k,\varepsilon_k\}$,
$W:=X\times Y\times Z\times X\times Y\times Y\times Z$, and
$W^*:=X^*\times Y^*\times Z^*\times X^*\times Y^*\times Y^*\times Z^*$, it is easy to see that there exists $\ell>0$ such that
\begin{equation}\label{chain12}
\|w_k^*\|_{W^*}\le\ell\;\mbox{ for all }\;k\in\mathbb N.
\end{equation}
By \eqref{chain9}, it follows from \cite[Proposition~3.3(i)]{Mordukhovich2023} whose proof is based on the approximate extremal principle (see \cite[Lemma~3.1]{Mordukhovich2006}), that there exist sequences
\begin{equation}\label{chain13}
\hat w_k\in\Omega_1\cap(w_k+\xi_k B_W),~~\tilde w_k\in\Omega_2\cap(w_k+\xi_k B_W),
\end{equation}
\begin{equation}\label{chain14}
\hat w_k^*\in\widehat N(\hat w_k;\Omega_1),\;\mbox{ and }\;
\tilde w_k^*\in\widehat N(\hat w_k;\Omega_2)
\end{equation}
such that for each $k\in\mathbb N$ we have the estimates
\begin{equation}\label{chain15}
\begin{aligned}
&\|\lambda_k w_k^*-\hat w_k^*-\tilde w_k^*\|_{W^*}\leq2(1+\ell)\varepsilon_k~~\operatorname{and}\\
&1-(1+\ell)\varepsilon_k\leq\max\big\{\lambda_k,\|\hat w_k^*\|_{W^*}\big\}\leq1+(1+\ell)\varepsilon_k.
\end{aligned}
\end{equation}
The structures in \eqref{chain8} and \cite[Lemma~2.6]{Mordukhovich2023} lead us to the representations
\begin{equation}\label{chain18}
\begin{array}{ll}
\hat w_k=(\hat x_{1k},\hat y_{1k},\hat z_{1k},\hat x_{2k},\hat y_{2k},\hat y_{3k},\hat z_{2k}),\;\tilde w_k=(\tilde x_k,\tilde y_k,\tilde z_k,\tilde x_k,\tilde y_k,\tilde y_k,\tilde z_k),\\
\hat w_k^*=(0,0,0,\hat x_{k}^*,\hat y_{1k}^*,\hat y_{2k}^*,\hat z_k^*),\;
\tilde w_k^*=(-\tilde x_k^*,-\tilde y_{1k}^*-\tilde y_{2k}^*,-\tilde z_k^*,\tilde x_k^*,\tilde y_{1k}^*,\tilde y_{2k}^*,\tilde z_k^*).
\end{array}
\end{equation}
Since $w_k\to(\bar x,\bar y,\bar z,\bar x,\bar y,\bar y,\bar z)$ and $\xi_k\to0$,
we get
\begin{equation}\label{chain20}
(\hat x_{2k},\hat y_{2k},\hat y_{3k},\hat z_{2k})\to(\bar x,\bar y,\bar y,\bar z)\;\mbox{ as }\;k\to\infty.
\end{equation}
It follows now from \eqref{chain8}, \eqref{chain14}, \eqref{chain18}, and \cite[Proposition~1.2]{Mordukhovich2006} that
$$
(\hat x_{k}^*,\hat y_{1k}^*,\hat y_{2k}^*,\hat z_k^*)\in\widehat N\big((\hat x_{2k},\hat y_{2k},\hat y_{3k},\hat z_{2k});Q\big),
$$
which, being combined with the definition of $Q$, yields
\begin{equation}\label{chain21}
\begin{array}{ll}
(\hat x_{k}^*,\hat y_{1k}^*)\in\widehat N\big((\hat x_{2k},\hat y_{2k});\operatorname{gph} S_1|_\Omega\big),\\
(\hat y_{2k}^*,\hat z_{k}^*)\in\widehat N\big((\hat y_{3k},\hat z_{2k});\operatorname{gph} S_2\big),\quad k\in\mathbb N.
\end{array}
\end{equation}
In particular, we arrive at the inclusion
\begin{equation}\label{chain22}
(\hat y_{1k}^*,0)\in[\widehat N\big((\hat y_{2k},\hat x_{2k});\operatorname{gph} S_1|_\Omega^{-1}\big)+{\|\hat x_k^*\|_{X^*}}B_{X^*\times Y^*}]\cap\big[Y^*\times J\big(T(\hat x_{2k};\Omega)\big)\big].
\end{equation}

Next we claim that there exists a positive number $\lambda_0$ such that $\lambda_k\geq\lambda_0$ for all $k$. Indeed, supposing the contrary gives us without loss of generality that $\lambda_k\downarrow0$ as $k\to\infty$. Then it follows from \eqref{chain12} and \eqref{chain15} that
\begin{equation}\label{chain23}
\hat w_k^*+\tilde w_k^*\to0,~~\|\hat w_k^*\|_{W^*}\to1,~~\operatorname{and}~~\|\tilde w_k^*\|_{W^*}\to1.
\end{equation}
Employing further \eqref{chain18} and \eqref{chain23} shows that
\begin{equation}\label{chain24}
\hat x_k^*\to0,~\hat y_{1k}^*+\hat y_{2k}^*\to0,~\hat z_k^*\to0,~~\operatorname{and}~~(\|\hat y_{1k}^*\|_{Y^*},\|\hat y_{2k}^*\|_{Y^*})\to(1/2,1/2).
\end{equation}
This tells us therefore that
\begin{equation}\label{chain25}
\|\hat y_{1k}^*\|_{Y^*},\|\hat y_{2k}^*\|_{Y^*}\geq\frac{1}{4}~~\mbox{for sufficiently large }\;k,
\end{equation}
and that there exists $y^*\in Y^*$ for which
\begin{equation}\label{chain100}
\hat y_{1k}^*\rightharpoonup^* y^*,~~\hat y_{2k}^*\rightharpoonup^*-y^*\;\mbox{ as }\;k\to\infty.
\end{equation}
It follows from \eqref{chain22} and \eqref{chain25} that
\begin{equation*}
(\hat y_{1k}^*,0)\in\widehat N_{4\|\hat x_k^*\|_{X^*}}^c\big((\hat y_{2k},\hat x_{2k});\operatorname{gph} S_1|_\Omega^{-1}\big)\cap\big[Y^*\times J\big(T(\hat x_{2k};\Omega)\big)\big].
\end{equation*}
Combining the latter with \eqref{chain20}, \eqref{chain21}, \eqref{chain24}, and \eqref{chain100} brings us to
$$
y^*\in D^*S_2(\bar y|\bar z)(0)\cap(-D_\Omega^{*-1}S_1^{-1}(\bar y|\bar x)(0)),
$$
which ensures by the qualification condition \eqref{mixedQC1} that $y^*=0$. Moreover, the imposed requirements on either $S_2$ is PSNC at $(\bar y,\bar z)$, or $S_1^{-1}$ is completely mirror PSNC relative to $\Omega$ at $(\bar y,\bar x)$ tell us that
$$
\mbox{either }\;\hat y_{1k}^*\to0,\;\mbox{ or }\;\hat y_{2k}^*\to 0\;\mbox{ as }\;k\to\infty,
$$
respectively. This contradicts \eqref{chain25} and hence verifies the claimed existence of $\lambda_0>0$.

Thus we can assume without loss of generality that $\lambda_k=1$ for all $k\in\mathbb N$ and then deduce from \eqref{chain8}, \eqref{chain15}, and \eqref{chain18} that
$$
\begin{aligned}
&\|(\hat x_k^*,\hat y_{1k}^*,\hat y_{2k}^*,\hat z_k^*)+(\tilde x_k^*,\tilde y_{1k}^*,\tilde y_{2k}^*,\tilde z_k^*)\|_{X^*\times Y^*\times Y^*\times Z^*}\\
&+\|(x_k^*,0,-z_k^*)-(-\tilde x_k^*,-\tilde y_{1k}^*-\tilde y_{2k}^*,-\tilde z_k^*)\|_{X^*\times Y^*\times Z^*}\leq2(1+\ell)\varepsilon_k,
\end{aligned}
$$
which brings us to the estimate
\begin{equation}\label{chain30}
\|x_k^*-\hat x_k^*\|_{X^*}+\|z_k^*+\hat z_k^*\|_{Z^*}+\|\hat y_{1k}^*+\hat y_{2k}^*\|_{Y^*}\leq2(1+\ell)\varepsilon_k
\end{equation}
and ensures together with \eqref{chain21} and \eqref{chain30} that
$$
\begin{aligned}
&(x_k^*,-\hat y_{2k}^*)\in\widehat N\big((\hat x_{2k},\hat y_{2k});{\rm gph}\,S_1|_\Omega\big)+2(1+\ell)\varepsilon_kB_{X^*\times Y^*},\\
&(\hat y_{2k}^*,-z_k^*)\in\widehat N\big((\hat y_{3k},\hat z_{2k});{\rm gph}\,S_2\big)+2(1+\ell)\varepsilon_kB_{Y^*\times Z^*}.
\end{aligned}
$$
Furthermore, we deduce from \eqref{chain2}, \eqref{chain10}, and \eqref{chain13} that there exists $x_{2k}^*\in J\big(T(\hat x_{2k};\Omega)\big)$ satisfying $\|x_{2k}^*-x_k^*\|_{X^*}\le\varepsilon_k$, and hence
\begin{equation}\label{chain31}
\begin{array}{ll}
(x_{2k}^*,-\hat y_{2k}^*)\in&[\widehat N\big((\hat x_{2k},\hat y_{2k});\operatorname{gph} S_1|_\Omega\big)+{2(2+\ell)\varepsilon_k}B_{X^*\times Y^*}]
\\&\cap\big[J\big(T(\hat x_{2k};\Omega)\big)\times Y^*\big].
\end{array}
\end{equation}
It follows from \eqref{chain15} and \eqref{chain18} that the sequence $\{\hat y_{2k}^*\}$ is bounded. Hence selecting a subsequence if necessary gives us $y^*\in Y^*$ such that $\hat y_{2k}^*\rightharpoonup^*y^*$ as $k\to\infty$. Thus we just need to show that $x^*\in\widetilde D_\Omega^{c*}S_1(\bar x|\bar y)(y^*)$ and $y^*\in D^*S_2(\bar y|\bar z)(z^*)$. If there exists $\alpha>0$ such that $\|(x_{2k}^*,-\hat y_{2k}^*)\|_{X^*\times Y^*}>\alpha$ for all $k$, then deduce from \eqref{chain31} that
$$
(x_{2k}^*,-\hat y_{2k}^*)\in\widehat N_{2(2+\ell)\varepsilon_k/\alpha}^c\big((\hat x_{2k},\hat y_{2k});\operatorname{gph} S_1|_\Omega\big)\cap\big[J\big(T(\hat x_{2k};\Omega)\big)\times Y^*\big],
$$
which implies that $x^*\in\widetilde D_\Omega^{c*}S_1(\bar x|\bar y)(y^*)$. If $(x_{2k}^*,-\hat y_{2k}^*)\to (0,0)$ as $k\to\infty$, then $x^*=0$, $y^*=0$, and the claim is trivial. Similarly, we also get that $y^*\in D^*S_2(\bar y|\bar z)(z^*)$ and thus complete the proof of assertion (i).\vspace*{0.02in}

Next we verify assertion (ii). Observe that the strict differentiability of $S_2=f$ ensures that the qualification condition \eqref{mixedQC1} and the PSNC property of $S_2$ are satisfied automatically. To proceed with the proof of \eqref{chainrule5}, pick any $x^*\in D_\Omega^{c*}S(\bar x|\bar z)(z^*)$ and find
\begin{equation*}
\varepsilon_k\downarrow0,~~ (x_k,z_k)\stackrel{\operatorname{gph} S|_\Omega}{\longrightarrow}(\bar x,\bar z),\quad\mbox{and}
\end{equation*}
\begin{equation*}
(x_k^*,-z_k^*)\in\widehat N_{\varepsilon_k}^c\big((x_k,z_k);\operatorname{gph} S|_\Omega\big)\cap[J(T(x_k;\Omega)\big)\times Z^*]
\end{equation*}
with the convergence of the dual sequences
\begin{equation*}
z_k^*\to z^*~~\operatorname{and}~~x_k^*\rightharpoonup^*x^*\;\mbox{ as }\;k\to\infty.
\end{equation*}
Since $G$ from \eqref{G-semi} is inner semicontinuous relative to $\Omega\times Z$ at $(\bar x,\bar z,\bar y)$ with $\bar y\in G(\bar x,\bar z)$, there exist elements $y_k\in G(x_k,z_k)$ such that $y_k\to \bar y$ as $k\to\infty$. Taking $S_2=f$ in the proof of assertion (i), we find
\begin{equation}\label{chainiii1}
(\hat x_k,\hat y_k)\in (x_k,y_k)+\xi_k B_{X\times Y},~~(\tilde y_k,\tilde z_k)\in (y_k,z_k)+\xi_k B_{Y\times Z},
\end{equation}
and $(\hat x_k^*,\hat y_k^*)\in (x_k^*+\varepsilon_k B_{X^*})\times Y^*$ such that
\begin{equation}\label{chainiii2}
\begin{array}{ll}
(\hat x_k^*,-\hat y_k^*)\in [\widehat N\big((\hat x_k,\hat y_k);\operatorname{gph} S_1|_\Omega\big)+{(2+\ell)\varepsilon_k}B_{X^*\times Y^*}]\cap\big[J\big(T(\hat x_k;\Omega)\big)\times Y^*\big],\\
(\hat y_k^*,-z_k^*)\in\widehat N\big((\tilde y_k,\tilde z_k);\operatorname{gph} f\big)+(1+\ell)\varepsilon_k(B_{Y^*}\times B_{Z^*}).
\end{array}
\end{equation}
We clearly have the convergence $\hat x_k^*\rightharpoonup^*x^*$. Utilizing this together with \eqref{chainiii1} gives us
\begin{equation*}
(\hat x_k,\hat y_k)\to(\bar x,\bar y)~~\operatorname{and}~~(\tilde y_k,\tilde z_k)\to(\bar y,\bar z)\;\mbox{ as }\;k\to\infty.
\end{equation*}
It follows from \eqref{chainiii2} that there exist pairs
\begin{equation}\label{chainiii5}
\begin{array}{ll}
(\tilde y_k^*,-\tilde z_k^*)\in (\hat y_k^*,-z_k^*)+(1+\ell)\varepsilon_kB_{Y^*\times Z^*}\;\mbox{ with}\\
(\hat y_k^*,-\hat z_k^*)\in\widehat N\big((\tilde y_k,\tilde z_k);\operatorname{gph}f\big),\quad k\in\mathbb N.
\end{array}
\end{equation}
Thus \cite[Lemma~2.6]{Mordukhovich2023} gives us a sequence $\rho_k\downarrow0$ ensuring that the estimate
\begin{equation}\label{chainiii7}
\|\tilde y_k^*-\nabla f(\bar y)^*\tilde z_k^*\|_{Y^*}\leq\rho_k
\end{equation}
holds for sufficiently large $k$. It follows from   \eqref{chainiii5} and \eqref{chainiii7} that
\begin{equation*}
\tilde z_k^*\to z^*~~\operatorname{and}~\tilde y_k^*\to\nabla f(\bar y)z^*\;\mbox{ as }\;k\to\infty.
\end{equation*}
Passing to the limit in \eqref{chainiii2} with using the established convergence brings us to
$$
x^*\in D_\Omega^{c*} S_1(\bar x|\bar y)(\nabla f(\bar y)^*z^*),
$$
which verifies \eqref{chainrule5} and thus completes the proof of the theorem.
\end{proof}

Observe that assertions (i) and (ii) of Theorem~\ref{chainrule} are mutually independent. Indeed, inclusion \eqref{chainrule5} for the mixed contingent coderivative of the composition in (ii) is expressed via the {\it mixed} one for $S_1$ while not via its {\it normal} counterpart as in \eqref{chainrule1}. \vspace*{0.05in}

To conclude this section, we recall from \cite{Mordukhovich2023} the notions of {\it relative metric regularity} and {\it relative linear openness} of multifunctions.

\begin{definition}\label{metreg} Let $S\colon X\tto Y$ be set-valued mapping between Banach spaces, let $\O\subset X$, and let $(\ox,\oy)\in\gph S|_\O$. Then we say that:

{\bf(i)} $S$ is is {\sc metrically regular relative to} $\Omega$ around $(\bar x,\oy)$ if there exist neighborhoods $U$ of $\ox$ and $V$ of $\oy$ together with a number $\kappa\ge 0$ such that
$$
d(x';S^{-1}(y))\le\kappa d(y;S(x'))\;\mbox{ for all }\;x'\in \Omega\cap U\;\mbox{ and }\;y\in S(\O)\cap V.
$$

{\bf(ii)} $S$ is {\sc linearly open relative to} $\Omega$ around $(\bar x,\bar y)$ if there exist neighborhoods $U$ of $\ox$ and $V$ of $\oy$ together with a number $\mu\ge 0$ such that
\begin{equation*}
\big(S(x')+\ve B_Y\big)\cap S(\O)\cap V\subset S\big(x'+\mu\ve B_X)\;\mbox{ for all }\;x'\in\Omega\cap U.
\end{equation*}    
\end{definition}

It is known that the relative metric regularity and relative linear openness of $S$ around $(\ox,\oy)$ are equivalent to each other (with different moduli) being also equivalent to the relative Lipschitz-like property of the inverse $S^{-1}$ around $(\oy,\ox)$. Thus the properties from Definition~\ref{metreg} admit the corresponding coderivative characterizations; cf.\
\cite[Theorem~3.9]{Mordukhovich2023}. This allows us to obtain the following consequence of Theorems~\ref{chainrule} and \ref{criterion}.

\begin{corollary}
In the general setting of Theorem~{\rm\ref{chainrule}}, suppose that either $S_1|_\Omega$ is metrically regular/linearly open around $(\bar y,\bar x)$, or $S_2$ is Lipschitz-like around $(\bar y,\bar z)$. Then both conic contingent coderivative chain rules in \eqref{chainrule1} and \eqref{chainrule2} hold.
\end{corollary}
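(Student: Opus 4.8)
The plan is to show that the hypotheses of the Corollary are precisely designed to force the qualification condition \eqref{mixedQC1} and one of the two PSNC alternatives in Theorem~\ref{chainrule}(i), after which the conclusion is immediate from that theorem. The two cases in the Corollary correspond to the two mutually exclusive ways of guaranteeing the hypotheses of Theorem~\ref{chainrule}(i): the metric regularity/linear openness of $S_1|_\Omega$ around $(\bar y,\bar x)$ handles the $S_1^{-1}$ side, while the Lipschitz-like property of $S_2$ around $(\bar y,\bar z)$ handles the $S_2$ side.

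First I would dispose of the second case. If $S_2$ is Lipschitz-like relative to the whole space $Y$ around $(\bar y,\bar z)$, then by the implication (i)$\Rightarrow$(ii) of Theorem~\ref{criterion} (applied with $\Omega=Y$, so that the relative notions reduce to the classical ones), $S_2$ is PSNC at $(\bar y,\bar z)$ and $\|D^{c*}S_2(\bar y|\bar z)\|<\infty$. The finiteness of the coderivative norm gives $D^*S_2(\bar y|\bar z)(0)=\{0\}$, which makes the qualification condition \eqref{mixedQC1} trivially hold (the intersection is a subset of $\{0\}\cap(-D_\Omega^{*-1}S_1^{-1}(\bar y|\bar x)(0))$, hence $\{0\}$). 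Since $S_2$ is PSNC, the first PSNC alternative in Theorem~\ref{chainrule}(i) is satisfied, and \eqref{chainrule1}, \eqref{chainrule2} follow.

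For the first case, I would recall that, as stated in the paragraph preceding the Corollary, the relative metric regularity and the relative linear openness of $S_1|_\Omega$ around $(\bar y,\bar x)$ are each equivalent to the relative Lipschitz-like property of the inverse $(S_1|_\Omega)^{-1}=S_1^{-1}|_{(\cdot)}$ around $(\bar x,\bar y)$ — more precisely, to the Lipschitz-like property of $S_1^{-1}$ relative to the appropriate set — and that this property admits coderivative characterizations via \cite[Theorem~3.9]{Mordukhovich2023} together with Theorem~\ref{criterion}. The point is that this relative Lipschitz-like property of $S_1^{-1}$ forces, via the coderivative criterion, both $D_\Omega^{*-1}S_1^{-1}(\bar y|\bar x)(0)=\{0\}$ (killing \eqref{mixedQC1}, since then the intersection lies in $D^*S_2(\bar y|\bar z)(0)\cap\{0\}=\{0\}$) and the complete mirror PSNC property of $S_1^{-1}$ relative to $\Omega$ at $(\bar y,\bar x)$ (the second PSNC alternative). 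Thus the hypotheses of Theorem~\ref{chainrule}(i) hold, and the chain rules \eqref{chainrule1} and \eqref{chainrule2} follow.

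The main obstacle I anticipate is the bookkeeping in the first case: one must match up the "mirror" coderivative $D_\Theta^{*-1}S(\bar x|\bar y)$ and the "completely mirror PSNC" property from Definition~\ref{mirrorcc} — which are defined for the pair $(S_1,\Theta)$ in a way that involves $J(T(y_k;\Theta))$ on the $Y$-side — with the ordinary relative coderivative and relative PSNC property of the inverse mapping $S_1^{-1}$ relative to $\Omega$ as characterized in Theorem~\ref{criterion}. Concretely one has to verify that $D_\Omega^{*-1}S_1^{-1}(\bar y|\bar x)$ is exactly the relative mixed contingent coderivative of $S_1^{-1}$ relative to $\Omega$ (with the roles of the two spaces swapped, so that the $J(T(\cdot;\Omega))$ constraint sits on the correct factor), and that "completely mirror PSNC relative to $\Omega$" for $S_1^{-1}$ coincides with the relative PSNC property of $S_1^{-1}$ from Definition~\ref{psnc}; this is a direct unravelling of the definitions but must be done carefully. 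Once this identification is in place, Theorem~\ref{criterion} applied to $S_1^{-1}$ relative to $\Omega$ delivers exactly $D_\Omega^{*-1}S_1^{-1}(\bar y|\bar x)(0)=\{0\}$ and the required PSNC property, and the proof closes.
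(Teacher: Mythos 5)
The paper does not actually include a proof of this corollary; it is offered as an immediate consequence of Theorem~\ref{chainrule}, Theorem~\ref{criterion}, and the coderivative characterization of relative metric regularity from \cite[Theorem~3.9]{Mordukhovich2023} cited in the paragraph preceding the statement. Your overall strategy -- verify the qualification condition \eqref{mixedQC1} and one PSNC alternative of Theorem~\ref{chainrule}(i), then conclude -- is the right one, and your treatment of the second case (Lipschitz-like $S_2$) is clean and correct: Theorem~\ref{criterion} with the trivial constraint set gives the unconstrained PSNC of $S_2$ and $D^*S_2(\bar y|\bar z)(0)=\{0\}$, which kills \eqref{mixedQC1} and provides the first PSNC alternative.

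In the first case, however, the ``bookkeeping'' you anticipate is not merely careful unravelling; your proposed identification is wrong. You claim that $D_\Omega^{*-1}S_1^{-1}(\bar y|\bar x)$ and the completely mirror PSNC property of $S_1^{-1}$ relative to $\Omega$ ``coincide'' with the relative mixed contingent coderivative and relative PSNC property of $S_1^{-1}$ relative to $\Omega$ from Definitions~\ref{coderivatives} and \ref{psnc} after swapping the roles of the spaces, and that Theorem~\ref{criterion} can then be applied to ``$S_1^{-1}$ relative to $\Omega$''. But $S_1^{-1}\colon Y\tto X$ has domain $Y$ while $\Omega\subset X$ sits in its \emph{range}, so the objects in Definitions~\ref{coderivatives} and \ref{psnc} -- which by construction place the constraint set and the factor $J(T(\cdot;\Omega))$ on the domain side -- simply are not defined for the pair $(S_1^{-1},\Omega)$, and Theorem~\ref{criterion} cannot be invoked in the way you describe. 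The mirror constructions of Definition~\ref{mirrorcc} are genuinely different objects (range-constrained, with the $J(T(\cdot;\Omega))$ restriction on the image factor), not a relabelling of Definition~\ref{psnc}.

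What does work, and closes your gap with only a small adjustment: relative metric regularity/linear openness of $S_1|_\Omega$ around $(\bar x,\bar y)$ is the plain (unconstrained) Lipschitz-like property of $\mathfrak F:=(S_1|_\Omega)^{-1}=S_1^{-1}(\cdot)\cap\Omega$ around $(\bar y,\bar x)$. Apply Theorem~\ref{criterion} to $\mathfrak F\colon Y\tto X$ with the trivial constraint set $Y$ to obtain the unconstrained PSNC of $\mathfrak F$ at $(\bar y,\bar x)$ and $D^*\mathfrak F(\bar y|\bar x)(0)=\{0\}$. Now compare Definitions~\ref{mirrorcc} and \ref{psnc}: the mirror coderivative and mirror PSNC of $S_1^{-1}$ relative to $\Omega$ are obtained from the corresponding unconstrained constructions for $\mathfrak F$ by intersecting the admissible dual sequences with the \emph{additional} cone $X^*\times J(T(\cdot;\Omega))$, so fewer sequences are admissible and the mirror conditions are \emph{weaker} consequences. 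Hence the unconstrained PSNC of $\mathfrak F$ implies the completely mirror PSNC of $S_1^{-1}$ relative to $\Omega$, and $D_\Omega^{*-1}S_1^{-1}(\bar y|\bar x)(0)\subset D^*\mathfrak F(\bar y|\bar x)(0)=\{0\}$. This establishes both hypotheses of Theorem~\ref{chainrule}(i) in the first case, whereas the ``coincidence'' you postulated would actually fail.
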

\vspace*{-0.2in}

\section{Relative Extremal Principle and Sum Rules for Conic Contingent Coderivatives}\label{sec:sum}

The primary goal of this section is to establish, for the first time in the literature, {\it pointbased sum rules} for both normal and  mixed conic contingent coderivatives from Definition~\ref{coderivatives}(ii,iii), and hence for their equivalent versions from \cite{Mordukhovich2023}. To derive such rules, we develop two major results of the undoubted own interest for variational analysis and applications, which go far beyond coderivative calculus. They are the {\it relative extremal principle} for systems of closed sets and the {\it relative fuzzy intersection rule} for $\ve$-regular normals whose proof is based on the relative extremal principle.\vspace*{0.02in}

We start with the definition of {\it relative extremal points} of set systems, where $\Pi_X(\Lambda)$ signifies the projection of a set $\Lambda\subset X\times Y$ onto the space $X$.

\begin{definition}\label{ext-point} Let $\Lambda_1,\Lambda_2\subset X\times Y$ and $\O\subset X$ be nonempty subsets of Banach spaces. We say that $(\bar x,\bar y)\in\Lambda_1\times\Lambda_2$ is a {\sc local extremal point} of the set system $\{\Lambda_1,\Lambda_2\}$ relative to $\Omega$ if there exist sequence $\{b_k\}\subset X$ and a neighborhood $U$ of $(\bar x,\bar y)$ such that $b_k\to 0$ as $k\to\infty$, $\Pi_X(\Lambda_1\cup\Lambda_2)\subset \Omega$,  and
$$
\big(\Lambda_1+(0,b_k)\big)\cap\Lambda_2\cap U=\emp\;\mbox{ for all~large }\;k\in\mathbb N.
$$
\end{definition}

Here is the {\it relative extremal principle} for closed sets in reflexive Banach spaces.

\begin{theorem}\label{extremal}
Let $X$ and $Y$ be two reflexive Banach spaces, let $\Omega$ be a closed and convex subset of $X$, and let $(\bar x,\bar y)$ be a local extremal point of the system of closed sets $\{\Lambda_1,\Lambda_2\}$ relative to $\Omega$. Then for any $\varepsilon>0$, there exist pairs $(x_i,y_i)\in \Lambda_i\cap[(\bar x+\varepsilon B_X)\times(\bar y+\varepsilon B_Y)]$ and $(x_i^*,y_i^*)\in X^*\times Y^*$, $i=1,2$, such that we have the relationships
\begin{equation}\label{extr0}
\begin{aligned}
&(x_i^*,y_i^*)\in\big[\widehat N\big((x_i,y_i);\Lambda_i\big)+\varepsilon (B_{X^*}\times B_{Y^*})\big]\cap \big[J\big(T(x_i;\Omega)\big)\times Y^*\big],\quad i=1,2,\\
&(x_1^*,y_1^*)+(x_2^*,y_2^*)=0,\;\mbox{ and }\;\|(x_1^*,y_1^*)\|_{X^*\times Y^*}+\|(x_2^*,y_2^*)\|_{X^*\times Y^*}=1.
\end{aligned}
\end{equation}
\end{theorem}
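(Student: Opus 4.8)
The plan is to reduce the relative extremal principle to the classical (non-relative) approximate extremal principle in reflexive Banach spaces by absorbing the constraint set $\Omega$ into the geometry through the closed convex set $\Lambda_i \cap (\Omega \times Y)$, and then to extract the conic tangential information $J(T(x_i;\Omega))$ afterwards via Proposition~\ref{JT}. First I would set $\widetilde\Lambda_i := \Lambda_i \cap (\Omega \times Y)$ for $i=1,2$; since $\Pi_X(\Lambda_1 \cup \Lambda_2) \subset \Omega$ by Definition~\ref{ext-point}, these new sets agree with $\Lambda_i$ near $(\bar x,\bar y)$, so $(\bar x,\bar y)$ remains a (non-relative) local extremal point of $\{\widetilde\Lambda_1,\widetilde\Lambda_2\}$ in the usual sense: the separating sequence $(0,b_k)$ works verbatim. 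Both $\widetilde\Lambda_i$ are closed, and the ambient space $X \times Y$ is reflexive, hence Asplund, so the classical approximate extremal principle (\cite[Lemma~3.1]{Mordukhovich2006}, in the form used throughout this paper) applies and yields pairs $(x_i,y_i) \in \widetilde\Lambda_i \cap [(\bar x+\tfrac\varepsilon2 B_X)\times(\bar y+\tfrac\varepsilon2 B_Y)]$ and dual elements $(x_i^*,y_i^*) \in \widehat N((x_i,y_i);\widetilde\Lambda_i)$ with $(x_1^*,y_1^*)+(x_2^*,y_2^*)$ small and the norm sum equal to $1$.

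The next step is to pass from $\widehat N((x_i,y_i);\widetilde\Lambda_i) = \widehat N((x_i,y_i); \Lambda_i \cap (\Omega\times Y))$ to a sum of $\widehat N((x_i,y_i);\Lambda_i)$ and the conic normal information coming from $\Omega$. Here I would apply the relative fuzzy intersection rule promised in Section~\ref{sec:sum} (whose own proof rests on this extremal principle — so in the actual paper the logical order must be: prove Theorem~\ref{extremal} first, then deduce the fuzzy intersection rule from it; I will therefore keep the present proof self-contained and not invoke it). Concretely, since $\Omega\times Y$ is closed and convex, its regular normal cone at $(x_i,y_i)$ is $\widehat N(x_i;\Omega)\times\{0\}$, and for a regular normal to the intersection one gets, up to an $\varepsilon$-ball, a decomposition $\widehat N((x_i,y_i);\widetilde\Lambda_i) \subset \widehat N((x_i,y_i);\Lambda_i) + \big(\widehat N(x_i;\Omega)\times\{0\}\big) + \varepsilon(B_{X^*}\times B_{Y^*})$ — this is exactly the content of the approximate extremal principle applied to the two sets $\Lambda_i$ and $\Omega\times Y$ at their common point $(x_i,y_i)$, which is again legitimate because $(x_i,y_i)$ need not be extremal but the fuzzy intersection rule for Asplund spaces does not require extremality. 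Writing $(x_i^*,y_i^*) = (a_i^*,y_i^*) + (c_i^*,0) + \varepsilon(\cdot)$ accordingly, the component $(a_i^*,y_i^*)$ lands (modulo $\varepsilon$) in $\widehat N((x_i,y_i);\Lambda_i)$, and it remains to arrange that $x_i^*$ itself — not $c_i^*$ — lies in $J(T(x_i;\Omega))+\varepsilon B_{X^*}$.

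For the tangential membership I would use the convexity of $\Omega$ crucially: for a closed convex set, $\widehat N(x_i;\Omega)$ is the (negative) polar of $T(x_i;\Omega)$, and the duality mapping $J$ is a bijection of $X$ onto $X^*$ in the assumed renorming. The key observation is that the extremal-principle construction can be run so that the normal directions are already \emph{taken inside} $J(T(x_i;\Omega))$: one applies the approximate extremal principle not to $\{\Lambda_1,\Lambda_2\}$ but, after a preliminary localization, exploits that the separation forces the relevant dual vectors to be supporting functionals of the convex constraint, hence of the form $J(z)$ with $z\in T(x_i;\Omega)$ up to an $\varepsilon$-perturbation — this is precisely where Proposition~\ref{JT} enters, upgrading membership at $\bar x$ to membership at nearby $x_i\in\Omega\cap(\bar x+\delta B_X)$. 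Absorbing all accumulated $\varepsilon$-errors into a single $\varepsilon(B_{X^*}\times B_{Y^*})$ and finally renormalizing so that $\|(x_1^*,y_1^*)\|+\|(x_2^*,y_2^*)\|=1$ exactly (possible since the pre-normalization sum is within $\varepsilon$ of $1$, so dividing through changes everything by $O(\varepsilon)$) yields \eqref{extr0}. The main obstacle I expect is precisely this last coordination: ensuring that the $J(T(x_i;\Omega))$-membership survives the fuzzy decomposition of the normal cone to the intersection, i.e., that the "$\Omega$-part" $c_i^*$ of the normal can be reabsorbed rather than becoming an obstruction — this is where the interplay between the convexity of $\Omega$, the single-valuedness and bicontinuity of $J$, and the neighborhood stability in Proposition~\ref{JT} must be used together, and it is the only non-routine point in an argument that otherwise reduces cleanly to the known Asplund-space machinery.
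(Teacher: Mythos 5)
There is a genuine gap, and you have already located it yourself: the tangential membership $x_i^*\in J\big(T(x_i;\Omega)\big)$ does not come out of the black-box Asplund-space machinery. Two concrete problems. First, you observe that $\widetilde\Lambda_i:=\Lambda_i\cap(\Omega\times Y)$ equals $\Lambda_i$ because $\Pi_X(\Lambda_1\cup\Lambda_2)\subset\Omega$, but then the ``fuzzy intersection'' step that is supposed to surface a $\widehat N(x_i;\Omega)$-component is vacuous: you are decomposing a normal to $\Lambda_i$ against the intersection $\Lambda_i\cap(\Omega\times Y)=\Lambda_i$, and the inclusion $\widehat N\subset\widehat N+\widehat N(x_i;\Omega)\times\{0\}+\varepsilon B$ carries no information. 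Second, even if it did produce a nontrivial $c_i^*\in\widehat N(x_i;\Omega)$, that is of no help: $\widehat N(x_i;\Omega)$ is the \emph{polar} of $T(x_i;\Omega)$, essentially orthogonal to it, whereas you need $x_i^*$ in the $J$-\emph{image} of $T(x_i;\Omega)$. Proposition~\ref{JT} also cannot bridge this: it only propagates an already-known membership $x^*\in J(T(\bar x;\Omega))$ to nearby base points $x$, it does not create such a membership from an arbitrary regular normal.

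The paper resolves this precisely by refusing to treat the extremal principle as a black box. Following the smoothing/Ekeland construction underlying \cite[Theorem~2.10]{Mordukhovich2006}, the proof minimizes
$\phi(z)=\varphi(z)+\varepsilon\sum_j\|z-z_j\|^2/2^{j+1}$
with
$\varphi(z)=\|(x_1,y_1)-(x_2,y_2)+(0,b)\|_{X\times Y}$
over $\Lambda_1\times\Lambda_2$ and differentiates at the minimizer $\bar z=(\bar x_1,\bar y_1,\bar x_2,\bar y_2)$. This produces the dual vectors in an \emph{explicit form}: up to a unit normalization, $x_1^*\propto J(\bar x_2-\bar x_1)$ and $x_2^*\propto J(\bar x_1-\bar x_2)$. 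Since $\bar x_1,\bar x_2\in\Omega$ and $\Omega$ is convex, $\bar x_2-\bar x_1\in T(\bar x_1;\Omega)$ and $\bar x_1-\bar x_2\in T(\bar x_2;\Omega)$, and the positive homogeneity of $J$ then gives $x_i^*\in J(T(x_i;\Omega))$ exactly, with no approximation needed and no appeal to Proposition~\ref{JT}. In short, the tangential information is obtained not by post-processing abstract normals but by controlling their algebraic form from the start, which is why the proof re-runs the Ekeland iteration rather than citing the approximate extremal principle. Your proposal would need to be rebuilt along these lines; as written, the step ``the separation forces the relevant dual vectors to be supporting functionals of the convex constraint'' is precisely the assertion that must be proved, and the tools you invoke do not prove it.
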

\begin{proof} Take a neighborhood $U$ of $(\bar x,\bar y)$ from Definition~\ref{ext-point} such that for any $\varepsilon>0$ there exists $b\in Y$ with $\|b\|_Y\leq\varepsilon^3/2$ and $(\Lambda_1+(0,b))\cap\Lambda_2\cap U=\emptyset$. Assume for simplicity that $U=X\times Y$ and that $\varepsilon<1/2$. Then considering the function
$$
\varphi(z):=\|(x_1,y_1)-(x_2,y_2)+(0,b)\|_{X\times Y}~~\operatorname{for}~z=(x_1,y_1,x_2,y_2)\in X\times Y\times X\times Y,
$$
we conclude that $\varphi(z)>0$ on $\Lambda_1\times\Lambda_2$, and hence $\varphi$ is Fr$\acute{\text{e}}$chet differentiable at any point $z\in\Lambda_1\times\Lambda_2$ due to our choice of the equivalent norm on the reflexive space $X\times Y$; see Section~\ref{sec:pre}. In what follows, we always use the product norm 
$$
\|z\|:=(\|x_1\|_X^2+\|y_1\|_Y^2+\|x_2\|_X^2+\|y_2\|_Y^2)^{1/2}\;\mbox{ on }\;X\times Y\times X\times Y,
$$
which is obviously Fr$\acute{\text{e}}$chet differentiable at nonzero points of $X\times Y\times X\times Y$.

Let $z_0:=(\bar x,\bar y,\bar x,\bar y)$ and observe that the set
$$
W(z_0):=\{z\in\Lambda_1\times\Lambda_2~|~
\varphi(z)+\varepsilon\|z-z_0\|^2/2\leq\varphi(z_0)\}
$$
is nonempty and closed. Furthermore, for each $z:=(x_1,y_1,x_2,y_2)\in W(z_0)$ we get
\begin{equation}\label{extr}
\|x_1-\bar x\|_X^2+\|y_1-\bar y\|_Y^2+\|x_2-\bar x\|_X^2+\|y_2-\bar y\|_Y^2\leq2\varphi(z_0)/\varepsilon=2\|b\|_Y/\varepsilon
\leq\varepsilon^2,
\end{equation}
which implies that $W(z_0)\subset (\bar x+\varepsilon B_X)\times(\bar y+\varepsilon B_Y)\times(\bar x+\varepsilon B_X)\times(\bar y+\varepsilon B_Y)$. Next we inductively construct sequences of vectors $z_k\in\Lambda_1\times\Lambda_2$ and nonempty closed sets $W(z_k)$, $k\in\mathbb N$, as follows. Given $z_k$ and $W(z_k)$, $k=0,1,\ldots$, select $z_{k+1}\in W(z_k)$ satisfying
$$
\varphi(z_{k+1})+\varepsilon\sum\limits_{j=0}^k\frac{\|z_{k+1}-z_j\|^2}{2^{j+1}}
<\inf\limits_{z\in W(z_k)}\left\{\varphi(z)+\varepsilon\sum\limits_{j=0}^k
\frac{\|z-z_j\|^2}{2^{j+1}}\right\}+\frac{\varepsilon^3}{2^{3k+2}}.
$$
Construct further the set $W(z_{k+1})$ by
$$
W(z_{k+1}):=\left\{z\in\Lambda_1\times\Lambda_2~|~\varphi(z)
+\varepsilon\sum\limits_{j=0}^{k+1}\frac{\|z-z_j\|^2}{2^{j+1}}
\leq\varphi(z_{k+1})+\varepsilon\sum\limits_{j=0}^k
\frac{\|z_{k+1}-z_j\|^2}{2^{j+1}}\right\}
$$
and then define the function
\begin{equation}\label{phi}
\phi(z):=\varphi(z)+\varepsilon\sum\limits_{j=0}^\infty\frac{\|z-z_j\|^2}{2^{j+1}}.
\end{equation}
Following the proof of \cite[Theorem~2.10]{Mordukhovich2006}, we know that there exists $\bar z:=(\bar x_1,\bar y_1,\bar x_2,\bar y_2)\in X\times Y\times X\times Y$ such that $z_k\to \bar z$ as $k\to\infty$ and that $\bar z$ is a minimum point of $\phi$ over the set $\Lambda_1\times \Lambda_2$. Hence the function $\psi(z):=\phi(z)+\dd_{\Lambda_1\times\Lambda_2}(z)$ achieves at $\bar z$ its minimum over $X\times Y\times X\times Y$, which immediately yields $0\in\hat\partial \psi(\bar z)$. Note that $\phi$ is Fr$\acute{\text{e}}$chet differentiable at $\bar z$ due to $\varphi(\bar z)\not=0$ and the smoothness of $\|\cdot\|^2$. Now applying the sun rule of \cite[Proposition~1.107(i)]{Mordukhovich2006}, we get
$$
-\nabla \phi(\bar z)\in\widehat N(\bar z;\Lambda_1\times\Lambda_2)=\widehat N\big((\bar x_1,\bar y_1);\Lambda_1\big)\times \widehat N\big((\bar x_2,\bar y_2);\Lambda_2\big).
$$
It follows from \eqref{phi} that $\nabla \phi(\bar z)=(u_1^*,v_1^*,u_2^*,v_2^*)\in X^*\times Y^*\times X^*\times Y^*$, where
$$
\begin{aligned}
&(u_1^*,v_1^*):=(x^*,y^*)+\varepsilon\sum\limits_{j=0}^\infty\frac{\big(J(\bar x_1-x_{1j}),J(\bar y_1-y_{1j})\big)}{2^j},\\
&(u_2^*,v_2^*):=-(x^*,y^*)+\varepsilon\sum\limits_{j=0}^\infty\frac{\big(J(\bar x_2-x_{2j}),J(\bar y_2-y_{2j})\big)}{2^j}
\end{aligned}
$$
with $(x_{1j},y_{1j},x_{2j},y_{2j})=z_j$, $j=0,1,\ldots$, and where
$$
(x^*,y^*):=\left(\frac{J(\bar x_1-\bar x_2)}{\sqrt{\|\bar x_1-\bar x_2\|_X^2+\|\bar y_1-\bar y_2+b\|_Y^2}},\frac{J(\bar y_1-\bar y_2+b)}{\sqrt{\|\bar x_1-\bar x_2\|_X^2+\|\bar y_1-\bar y_2+b\|_Y^2}}\right).
$$
The latter tells us that $\|(x^*,y^*)\|_{X^*\times Y^*}=1$. It follows from \eqref{extr} that
$$
\begin{aligned}
&\big\|\big(J(\bar x_i-x_{ij}),J(\bar y_i-y_{ij})\big)\big\|_{X^*\times Y^*}
=\sqrt{\|\bar x_i-x_{ij}\|_X^2+\|\bar y_i-y_{ij}\|_Y^2}\\
\leq&\sqrt{\|\bar x_i-\bar x\|_X^2+\|\bar y_i-\bar y\|_Y^2}+\sqrt{\|\bar x-x_{ij}\|_X^2+\|\bar y-y_{ij}\|_Y^2}\le 2\varepsilon<1
\end{aligned}
$$
for $j=0,1,\ldots$ and $i=1,2$. Therefore,
$$
\sum\limits_{j=0}^\infty\frac{\big\|\big(J(\bar x_1-x_{1j}),J(\bar y_1-y_{1,j})\big)\big\|_{X^*\times Y^*}}{2^j}\leq2,\quad i=1,2.
$$
Putting $(x_i,y_i):=(\bar x_i,\bar y_i)$ and $(x_i^*,y_i^*):=(-1)^i(x^*,y^*)/2$ for $i=1,2$, we arrive at the relationships in \eqref{extr0} by $J(\bar x_2-\bar x_1)\in J(T(\bar x_1;\Omega))$ and $J(\bar x_1-\bar x_2)\in J(T(\bar x_2;\Omega))$ due to the convexity of $\Omega$ and the construction of $(x^*,y^*)$, which completes the proof.
\end{proof}

The next theorem establishes a {\it relative fuzzy intersection rule} for closed sets in Hilbert spaces. The proof is based on the relative extremal principle from Theorem~\ref{extremal} while using the convexity-preserving property of the duality mapping, which holds in Hilbert spaces; see Remark~\ref{rem-conv} for more discussions. Note that in Hilbert spaces, the duality mapping reduce to the identity operator.

\begin{theorem}\label{fuzzy} Let $X$ and $Y$ be Hilbert spaces, let $\Theta_1,\Th_2\subset X\times Y$ be locally closed around $(\bar x,\bar y)\in\Theta_1\cap\Theta_2$, and let $\Pi_X(\Theta_1\cup\Theta_2)\subset\Omega$ for some closed and convex set $\Omega\subset X$. Fix $\varepsilon\in(0,1)$ and pick $(x^*,y^*)\in\widehat N_\varepsilon^c\big((\bar x,\bar y);\Theta_1\cap\Theta_2\big)\cap[T(\bar x;\Omega)\times Y]$. Then for any $\nu>0$, there exist $\lambda\ge 0$,  $(x_i,y_i)\in\Theta_i\cap[(\bar x+\nu B_X)\times(\bar y+\nu B_Y)]$, and $(x_i^*,y_i^*)\in[\widehat N\big((x_i,y_i);\Theta\big)+(\|(x^*,y^*)\|_{X\times Y}+\nu)\varepsilon(B_X\times B_Y)]\cap[T(x_i;\Omega)\times Y]$, $i=1,2$, such that
\begin{equation}\label{fuzzy0}
\lambda (x^*,y^*)-(x_1^*,y_1^*)-(x_2^*,y_2^*)\in\nu(B_X\times B_Y)\;\mbox{ and }\;\max\{\lambda,\|(x_1^*,y_1^*)\|_{X\times Y}\}=1.
\end{equation}
\end{theorem}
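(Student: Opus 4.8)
**The plan is to derive the fuzzy intersection rule by applying the relative extremal principle (Theorem~\ref{extremal}) to a cleverly constructed pair of sets in a product space.** The standard device for converting an intersection rule into an extremal-point statement is as follows. Starting from $(x^*,y^*)\in\widehat N_\varepsilon^c\big((\bar x,\bar y);\Theta_1\cap\Theta_2\big)$, I would first translate the $\varepsilon$-regular normal cone membership into a statement about a regular normal cone of an augmented set by absorbing the $\varepsilon$-slack into an extra cone variable. Concretely, in the product space $(X\times\mathbb R)\times(Y\times\mathbb R)$ (or more precisely $X\times Y\times\mathbb R$), consider the two sets
$$
\widetilde\Theta_1:=\big\{(x,y,t)~\big|~(x,y)\in\Theta_1,\; t\le -\langle x^*,x-\bar x\rangle-\langle y^*,y-\bar y\rangle+\varepsilon\|(x^*,y^*)\|\|(x,y)-(\bar x,\bar y)\|\big\},\quad \widetilde\Theta_2:=\Theta_2\times(-\infty,0].
$$
The definition of $\widehat N_\varepsilon^c$ guarantees that $(\bar x,\bar y,0)$ is a local extremal point of $\{\widetilde\Theta_1,\widetilde\Theta_2\}$ relative to $\Omega$: perturbing the $t$-coordinate slightly downward forces the intersection to be empty near the reference point. (One must check the constraint-projection requirement $\Pi_X(\widetilde\Theta_1\cup\widetilde\Theta_2)\subset\Omega$, which follows from $\Pi_X(\Theta_1\cup\Theta_2)\subset\Omega$.)

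**Next I would apply Theorem~\ref{extremal}** to obtain points $(x_i,y_i,t_i)\in\widetilde\Theta_i$ near $(\bar x,\bar y,0)$ and dual elements $(x_i^*,y_i^*,s_i)$, $i=1,2$, lying in $[\widehat N((x_i,y_i,t_i);\widetilde\Theta_i)+\nu(\text{ball})]\cap[T(x_i;\Omega)\times Y\times\mathbb R]$ with $(x_1^*,y_1^*,s_1)+(x_2^*,y_2^*,s_2)=0$ and normalization $\sum\|\cdot\|=1$. The key computation is to identify $\widehat N((x,y,t);\widetilde\Theta_1)$ and $\widehat N((x,y,t);\widetilde\Theta_2)$. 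Since $\widetilde\Theta_2=\Theta_2\times(-\infty,0]$, its regular normals split as $\widehat N((x_2,y_2);\Theta_2)\times[0,\infty)$ (the nonnegativity coming from the half-line). For $\widetilde\Theta_1$, which is an epigraph-type region below a graph of a smooth-away-from-reference function, the regular normals on the boundary are of the form $\big(\lambda(x^*,y^*)+\text{small},\;\lambda\big)$ for $\lambda\ge 0$ — the $t$-component $s_1=\lambda\ge0$ is the Lagrange multiplier for the epigraph constraint, and this $\lambda$ is precisely what becomes the $\lambda$ in the conclusion. Matching the $\mathbb R$-components via $s_1+s_2=0$ with $s_1,s_2\ge0$ forces $s_1=s_2=0$ only in the degenerate case; in general one solves for $\lambda$ and reads off that $\lambda(x^*,y^*)-(x_1^*,y_1^*)-(x_2^*,y_2^*)$ is within $\nu$ of zero. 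Dividing through by $\max\{\lambda,\|(x_1^*,y_1^*)\|\}$ (which is positive by the normalization, after absorbing the $\mathbb R$-components) yields the stated normalization $\max\{\lambda,\|(x_1^*,y_1^*)\|_{X\times Y}\}=1$, and the error ball radius $(\|(x^*,y^*)\|+\nu)\varepsilon$ in the conclusion emerges from propagating the Lipschitz constant $\varepsilon\|(x^*,y^*)\|$ of the slack function through the normal cone estimate.

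**The main obstacle I anticipate is the computation of $\widehat N((x,y,t);\widetilde\Theta_1)$ at boundary points where $(x,y)\ne(\bar x,\bar y)$**, because the function $(x,y)\mapsto\varepsilon\|(x^*,y^*)\|\|(x,y)-(\bar x,\bar y)\|$ defining the slack region is \emph{not} differentiable — it has a norm singularity at $(\bar x,\bar y)$. Away from that point it is smooth (this is exactly where the Hilbert-space hypothesis and the renorming matter, as invoked in the proof of Theorem~\ref{extremal}), so the trick is to argue that the extremal points $(x_i,y_i)$ produced by the theorem can be taken away from the singularity, or alternatively to regularize the slack function. A cleaner route, given that the paper works in Hilbert spaces where $J$ is the identity and \emph{convexity is preserved}, is to note that $\|\cdot\|^2$ rather than $\|\cdot\|$ should be used so the defining function is everywhere smooth; this costs only a harmless reparametrization of $\varepsilon$ in the error terms. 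Once the regular normals of $\widetilde\Theta_1$ and $\widetilde\Theta_2$ are computed and the $\mathbb R$-components eliminated, the remaining bookkeeping — tracking the $\nu$-perturbations, the ball radii, and the normalization — is routine. I would also use the convexity of $\Omega$ together with $J=\mathrm{id}$ in Hilbert space to pass freely between $T(x_i;\Omega)$ and $J(T(x_i;\Omega))$ so that the tangent-cone memberships in the hypothesis and conclusion line up with what Theorem~\ref{extremal} delivers.
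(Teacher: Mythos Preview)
Your overall strategy---lift to $X\times Y\times\mathbb R$, encode the $\varepsilon$-regular normal inequality as a scalar constraint, verify extremality of $(\bar x,\bar y,0)$, apply the relative extremal principle (Theorem~\ref{extremal}), and read off the multiplier $\lambda$ from the extra coordinate---is exactly the route the paper takes. But your specific construction has an orientation error that makes the extremality claim false. You take
$$
\widetilde\Theta_1=\{(x,y,t):(x,y)\in\Theta_1,\;t\le h(x,y)\},\qquad\widetilde\Theta_2=\Theta_2\times(-\infty,0],
$$
so \emph{both} sets are unbounded below in $t$. For any $(x,y)\in\Theta_1\cap\Theta_2$ near $(\bar x,\bar y)$ with $h(x,y)>0$ (which the one-sided $\varepsilon$-normal inequality does not preclude), the point $(x,y,0)$ survives every small vertical translation of $\widetilde\Theta_1$; hence $(\bar x,\bar y,0)$ is \emph{not} a local extremal point. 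Correspondingly, in your dual matching both scalar components satisfy $s_1,s_2\ge 0$, so $s_1+s_2\approx 0$ forces $\lambda\approx 0$ always---not ``only in the degenerate case''. The paper avoids this by orienting the two sets oppositely: $\Lambda_1=\Theta_1\times[0,\infty)$ and
$$
\Lambda_2=\Big\{(x,y,\alpha):(x,y)\in\Theta_2,\;\alpha\le\langle(x^*,y^*),(x-\bar x,y-\bar y)\rangle-(L+\tfrac{\nu}{2})\varepsilon\|(x,y)-(\bar x,\bar y)\|\Big\},
$$
with $L=\|(x^*,y^*)\|$. Then $\lambda_1\le 0$, $\lambda_2\ge 0$, and the cancellation $\lambda_1+\lambda_2\approx 0$ produces a genuine multiplier $\lambda_2$. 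Note also the coefficient $(L+\frac{\nu}{2})\varepsilon$ rather than $L\varepsilon$: this slack is what turns the $\limsup$ inequality into the pointwise inequality $g(x,y)\le 0$ on a fixed neighborhood, which is what makes the shifted intersection empty.

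Your anticipated obstacle---nondifferentiability of $(x,y)\mapsto\|(x,y)-(\bar x,\bar y)\|$---and your proposed cure via $\|\cdot\|^2$ are both unnecessary. The paper never differentiates this map; it only uses that it is $1$-Lipschitz to bound $|\alpha-\alpha_2|\le (L+L\varepsilon+\frac{\nu}{2}\varepsilon)\|(x,y)-(x_2,y_2)\|$ and thereby to show that on the boundary of $\Lambda_2$ the regular normal lands in $\widehat N_{\mu\varepsilon}((x_2,y_2);\Theta_2)$ for an explicit $\mu$. Replacing $\|\cdot\|$ by $\|\cdot\|^2$ would destroy both the Lipschitz bound and the homogeneity that makes the final radius come out as $(L+\nu)\varepsilon$. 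The Hilbert-space hypothesis is used elsewhere: to identify $J$ with the identity so that $T(x_i;\Omega)$ is convex and the sum $x_2^*+\lambda_2 x^*$ of two tangent vectors stays tangent (this is the step \eqref{fuzzy10} and Remark~\ref{rem-conv}), not to smooth the norm.
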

\begin{proof}
Take any $\nu>0$ and $(x^*,y^*)\in\widehat N_\varepsilon^c \big((\bar x,\bar y);\Theta_1\cap\Theta_2\big)\cap[T(\bar x;\Omega)\times Y]$, and denote $L:=\|(x^*,y^*)\|_{X\times Y}$. Then there exists a neighborhood $U$ of $(\bar x,\bar y)$ such that
\begin{equation}\label{fuzzy1}
\langle x^*,x-\bar x\rangle+\langle y^*,y-\bar y\rangle-(L+\frac{1}{2}\nu)\varepsilon\|(x,y)-(\bar x,\bar y)\|_{X\times Y}\le 0
\end{equation}
for all $(x,y)\in\Theta_1\cap\Theta_2\cap U$. Define subsets of $X\times Y\times \mathbb R$ by
\begin{align*}
\Lambda_1:=\big\{(x,y,\alpha)\in X\times Y\times \mathbb R~\big|&~(x,y)\in\Theta_1,~\alpha\ge 0\big\};\\
\Lambda_2:=\big\{(x,y,\alpha)\in X\times Y\times \mathbb R~\big|&~(x,y)\in\Theta_2,~\alpha\leq \langle x^*,x-\bar x\rangle+\langle y^*,y-\bar y\rangle\\
&-(L+\frac{1}{2}\nu)\varepsilon\|(x,y)-(\bar x,\bar y)\|_{X\times Y}\big\}.
\end{align*}
Observe that $(\bar x,\bar y,0)\in\Lambda_1\cap\Lambda_2$ and that the sets $\Lambda_i$ are locally closed around $(\bar x,\bar y,0)$. Moreover, it is easy to check that
$$
\Lambda_1\cap\big(\Lambda_2-(0,0,\nu)\big)\cap(U\times \mathbb R)=\emptyset~~\operatorname{for~all}~\nu>0
$$
due to \eqref{fuzzy1} and the structure of $\Lambda_i$, $i=1,2$. Thus $(\bar x,\bar y,0)$ is a local extremal point relative to $\Omega$ of the set system $\{\Lambda_1,\Lambda_2\}$. For any $\tau\in(0,\frac{1}{2})$, it follows from Lemma~\ref{JT} that there exists a positive number $\rho$ such that
\begin{equation}\label{fuzzy2}
x^*\in T(x;\Omega)+\tau\varepsilon B_X\;\mbox{ for all }\;x\in \Omega\cap(\bar x+2\rho B_X).
\end{equation}
Applying Theorem~\ref{extremal} to the set system
$\{\Lambda_1,\Lambda_2\}$, we find $(x_i,y_i,\alpha_i)\in\Lambda_i$ and $(x_i^*,y_i^*,\lambda_i)\in\widehat N\big((x_i,y_i,\alpha_i);\Lambda_i\big)\cap[T(x_i;\Omega)\times Y\times \mathbb R+\tau\varepsilon(B_X\times B_Y\times \mathbb R)]$, $i=1,2$, such that
\begin{equation}\label{fuzzy3}
\begin{aligned}
&\|x_1^*+x_2^*\|_X^2+\|y_1^*+y_2^*\|_Y^2+|\lambda_1+\lambda_2|^2<\tau^2\varepsilon^2,\\
&\sqrt{\|x_i^*\|_X^2+\|y_i^*\|_Y^2+|\lambda_i|^2}\in
\left(\frac{1}{2}-\tau\varepsilon,\frac{1}{2}+\tau\varepsilon\right),\\
&\|x_i-\bar x\|_X^2+\|y_i-\bar y\|_Y^2+|\lambda_i|^2<{\rm min}\{\tau^2\varepsilon^2,\rho^2\},\quad i=1,2.
\end{aligned}
\end{equation}
It is easy to deduce from the above that $\lambda_1\leq0$, $(x_1^*,y_1^*)\in\widehat N\big((x_1,y_1);\Theta_1\big)\cap[T(x_1;\Omega)\times Y+\tau\varepsilon (B_X\times B_Y)]$, and
\begin{equation}\label{fuzzy4}
\limsup\limits_{(x,y,\alpha)\stackrel{\Lambda_2}{\longrightarrow}(x_2,y_2,\alpha_2)}
\frac{\langle x_2^*,x-x_2\rangle+\langle y_2^*,y-y_2\rangle+\lambda_2(\alpha-\alpha_2)}
{\sqrt{\|x-x_2\|_X^2+\|y-y_2\|_Y^2+|\alpha-\alpha_2|^2}}\leq0.
\end{equation}
We get from the structure of $\Lambda_2$ that $\lambda_2\geq0$ and
\begin{equation}\label{fuzzy5}
\alpha_2\leq\langle x_2^*,x_2-\bar x\rangle+\langle y_2^*,y_2-\bar y\rangle -(L+\frac{1}{2}\nu)\varepsilon\|(x_2,y_2)-(\bar x,\bar y)\|_{X\times Y}.
\end{equation}
If inequality \eqref{fuzzy5} is strict, then \eqref{fuzzy4} yields $\lambda_2=0$ and $(x_2^*,y_2^*)\in\widehat N\big((x_2,y_2);\Theta_2\big)\cap[T(x_2;\Omega)\times Y+\tau\varepsilon (B_X\times B_Y)]$. In this case, we arrive by using \eqref{fuzzy3} at \eqref{fuzzy0} with $\lambda=0$.

It remains to consider the case of equality in \eqref{fuzzy5}. This gives us  $(x,y,\alpha)\in\Lambda_2$ with
$$
\alpha=\langle x^*,x-\bar x\rangle+\langle y^*,y-\bar y\rangle-(L+\frac{1}{2}\nu)\varepsilon\|(x,y)-(\bar x,\bar y)\|_{X\times Y},~~(x,y)\in\Theta_2\setminus\{(x_2,y_2)\}.
$$
Substituting the latter into \eqref{fuzzy4}, we find  a neighborhood $V$ of $(x_2,y_2)$ such that
\begin{equation}\label{fuzzy6}
\langle x_2^*,x-x_2\rangle+\langle y_2^*,y-y_2\rangle+\lambda_2(\alpha-\alpha_2)\leq\tau\varepsilon(\|(x,y)-(\bar x,\bar y)\|_{X\times Y}+|\alpha-\alpha_2|)
\end{equation}
for all $(x,y)\in\Theta_2\cap V$ with the corresponding  number $\alpha$ satisfying
$$
\alpha-\alpha_2=\langle x^*,x-x_2\rangle+\langle y^*,y-y_2\rangle+(L+\frac{1}{2}\nu)\varepsilon(\|(x_2,y_2)-(\bar x,\bar y)\|_{X\times Y}-\|(x,y)-(\bar x,\bar y)\|_{X\times Y}).
$$
The usage of the triangle and Cauchy–Schwarz inequalities ensures that
$$
|\alpha-\alpha_2|\leq(L+L\varepsilon+\frac{1}{2}\nu\varepsilon)\|(x,y)-(x_2,y_2)\|_{X\times Y}.
$$
Observe that the left-hand side $\vartheta$ in \eqref{fuzzy6} can be represented as
$$
\begin{aligned}
\vartheta=&\langle x_2^*+\lambda_2x^*,x-x_2\rangle+\langle y_2^*+\lambda_2y^*,y-y_2\rangle\\
&+\lambda_2(L+\frac{1}{2}\nu)\varepsilon(\|(x_2,y_2)-(\bar x,\bar y)\|_{X\times Y}-\|(x,y)-(\bar x,\bar y)\|_{X\times Y}),
\end{aligned}
$$
and thus \eqref{fuzzy6} implies the estimate
$$
\begin{aligned}
&\langle x_2^*+\lambda_2x^*,x-x_2\rangle+\langle y_2^*+\lambda_2y^*,y-y_2\rangle\\
&\leq
[\lambda_2(L+\frac{1}{2}\nu)+\tau(1+L+L\varepsilon+\nu\varepsilon)] \varepsilon
\|(x,y)-(x_2,y_2)\|_{X\times Y}
\end{aligned}
$$
for all $(x,y)\in \Theta_2\cap V$. This gives us the inclusion
\begin{equation}\label{fuzzy7}
\begin{aligned}
&(x_2^*+\lambda_2x^*,y_2^*+\lambda_2^*y^*)\in\widehat N_{\mu\varepsilon}\big((x_2,y_2);\Theta_2\big)\\
&\operatorname{with}~\mu:=\lambda_2(L+\frac{1}{2}\nu)+\tau(1+L+L\varepsilon+\nu\varepsilon).
\end{aligned}
\end{equation}
Employing Lemma~\ref{JT} ensures the existence of $\rho'\in(0,\rho)$ such that
\begin{equation}\label{fuzzy8}
x_2^*\in T(x;\Omega)+\tau\varepsilon B_X\;\mbox{ for all }\;x\in \Omega\cap(x_2+\rho'B_X).
\end{equation}
It follows from the representation of $\ve$-normals in \eqref{fuzzy7} taken from \cite[page~222]{Mordukhovich2006} that there exists a pair $(u,v)\in\Theta_2\cap[(x_2,y_2)+\rho'(B_X\times B_Y)]$ such that
$$
(x_2^*+\lambda_2x^*,y_2^*+\lambda_2y^*)\in\widehat N\big((u,v);\Theta_2\big)+(\mu+\tau)\varepsilon(B_X\times B_Y).
$$
Then we get $x_2^*\in T(u;\Omega)+\tau\varepsilon$ from \eqref{fuzzy8}, $x^*\in T(u;\Omega)+\tau\varepsilon$ from \eqref{fuzzy2} and the third inequality in \eqref{fuzzy3}.
Denoting $\eta:=\max\{\lambda_2,\|(x_2^*,y_2^*)\|_{X\times Y}\}$ gives us 
$$
\frac{1}{4}-\frac{\tau\varepsilon}{2}<\eta<\frac{1}{2}+\nu\varepsilon
$$
by \eqref{fuzzy3} with $\max\{\lambda_2,1/5\}\leq\eta<3/4$ when $\tau$ is sufficiently small. Setting
$$
\lambda:=\frac{\lambda_2}{\eta},~~(u_1^*,v_1^*):=-\frac{1}{\eta}(x_2^*,y_2^*),~~(u_2^*,v_2^*):=\frac{1}{\eta}(x_2^*+\lambda_2x^*,y_2^*+\lambda_2y^*),
$$
we have that $\lambda\geq0$, $\max\{\lambda,\|u_1^*\|_X+\|v_1^*\|_Y\}=1$, and $\lambda(x^*,y^*)=(u_1^*,v_1^*)+(u_2^*,v_2^*)$. Moreover, $(u_2^*,v_2^*)\in\widehat N\big((u,v);\Theta_2\big)+(\mu+\tau)\varepsilon/\eta(B_X\times B_Y)$ and
$$
(u_1^*,v_1^*)=\frac{1}{\eta}(x_1^*,y_1^*)-\frac{1}{\eta}(x_1^*+x_2^*,y_1^*+y_2^*)\in\widehat N\big((x_1,x_2);\Theta_1\big)+\tau\varepsilon/\eta(B_X\times B_Y)
$$
due to \eqref{fuzzy3}. This allows us to conclude that
\begin{align}
&u_1^*=\frac{x_1^*}{\eta}-\frac{x_1^*+x_2^*}{\eta}\in T(x_1;\Omega)+\frac{2\tau\varepsilon}{\eta} B_X,\label{fuzzy9}\\
&u_2^*=\frac{x_2^*}{\eta}+\frac{\lambda_2x^*}{\eta}\in T(u;\Omega)+\frac{\lambda_2+1}{\eta}\tau\varepsilon B_X,\label{fuzzy10}
\end{align}
where \eqref{fuzzy9} comes from the first inequality in \eqref{fuzzy3} by $x_1^*\in T(x_1;\Omega)+\tau\varepsilon B_X$, while \eqref{fuzzy10} follows from the fact that $x^*,x_2^*\in T(u;\Omega)+\tau\varepsilon B_X$ and the convexity of $T(u;\Omega)$. This completes the proof of the theorem by taking $\tau$ to be sufficiently small.
\end{proof}

\begin{remark}\label{rem-conv}
{\rm In the proof of Theorem~\ref{fuzzy}, we deduce the inclusion in \eqref{fuzzy10} from the convexity of $J(T(u;\Omega))=T(u;\Omega)$ in the Hilbert space $X$. Note that in reflexive spaces, the image set $J(T(u;\Omega))$ may be not convex when $\Omega$ is convex}.
\end{remark}

Now we come to the last theorem of this paper providing the {\it pointbased sum rule} for both conic contingent coderivatives from Definition~\ref{coderivatives}(ii,iii).

\begin{theorem}\label{sumrule} Let $S_i:X\rightrightarrows Y$, $i=1,2$, be set-valued mappings between Hilbert spaces, let $\Omega\subset X$ be a closed and convex set, and let $(\bar x,\bar y)\in \operatorname{gph}(S_1+S_2)|_\Omega$. Consider the mapping $G\colon X\times Y\tto Y^2$ defined by
$$
G(x,y):=\big\{(y_1,y_2)\in Y\times Y~\big|~y_1\in S_1(x),~y_2\in S_2(x),~y_1+y_2=y\big\}.
$$
Fix $(\bar y_1,\bar y_2)\in G(\bar x,\bar y)$, and let $G$ be inner semicontinuous relative to $\Omega\times Y$ at
$(\bar x,\bar y,\bar y_1,\bar y_2)$. Assume that the graphs of $S_1$ and $S_2$ are locally closed around
$(\bar x,\bar y_1)$ and $(\bar x,\bar y_2)$, respectively, that either $S_1$ is PSNC relative to $\Omega$ at $(\bar x,\bar y_1)$  or $S_2$ is PSNC relative to $\Omega$ at $(\bar x,\bar y_2)$, and that $\{S_1,S_2\}$ satisfies the mixed coderivative qualification condition
\begin{equation}\label{sumQC}
D_\Omega^{c*}S_1(\bar x|\bar y_1)(0)\cap(-D_\Omega^{c*}S_2(\bar x|\bar y_2))(0)=\{0\}.
\end{equation}
Then for all $y^*\in Y$, we have the inclusions
\begin{equation}\label{sumrule1}
D_\Omega^{c*}(S_1+S_2)(\bar x|\bar y)(y^*)\subset D_\Omega^{c*}S_1(\bar x|\bar y_1)(y^*)+D_\Omega^{c*}S_2(\bar x|\bar y_2)(y^*),
\end{equation}
\begin{equation}\label{sumrule2}
\widetilde D_\Omega^{c*}(S_1+S_2)(\bar x|\bar y)(y^*)\subset\widetilde D_\Omega^{c*}S_1(\bar x|\bar y_1)(y^*)+\widetilde D_\Omega^{c*}S_2(\bar x|\bar y_2)(y^*).
\end{equation}
\end{theorem}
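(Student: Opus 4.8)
The plan is to derive the sum rule from the chain rule of Theorem~\ref{chainrule} by a standard reduction, exactly as in the unconstrained theory of \cite{Mordukhovich2006}. First I would represent the sum $S_1+S_2$ as a composition. Introduce the single‑valued ``summation'' mapping $\Sigma\colon Y\times Y\to Y$ by $\Sigma(y_1,y_2):=y_1+y_2$, which is obviously linear and strictly differentiable with $\nabla\Sigma(\bar y_1,\bar y_2)^*y^*=(y^*,y^*)$, and the product mapping $\widetilde S\colon X\tto Y\times Y$ by $\widetilde S(x):=S_1(x)\times S_2(x)$, so that $S_1+S_2=\Sigma\circ\widetilde S$. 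The auxiliary mapping $G$ from the theorem statement is then precisely the mapping $(x,y)\mapsto \widetilde S(x)\cap\Sigma^{-1}(y)$ of Theorem~\ref{chainrule}, so the imposed inner semicontinuity of $G$ relative to $\Omega\times Y$ at $(\bar x,\bar y,\bar y_1,\bar y_2)$ is exactly the hypothesis needed there.

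The next step is to translate the hypotheses. Since $\Sigma$ is single‑valued and strictly differentiable, part (ii) of Theorem~\ref{chainrule} applies and automatically delivers the PSNC property and the mixed qualification condition for the outer mapping $S_2=\Sigma$; moreover it gives the sharper inclusion \eqref{chainrule5} with the {\it mixed} contingent coderivative of $\widetilde S$, namely $D_\Omega^{c*}(S_1+S_2)(\bar x|\bar y)(y^*)\subset D_\Omega^{c*}\widetilde S(\bar x|(\bar y_1,\bar y_2))(y^*,y^*)$, while part (i) gives the analogous inclusion for the normal coderivative via \eqref{chainrule2}. It then remains to compute the conic contingent coderivatives (mixed and normal) of the product mapping $\widetilde S$ in terms of those of $S_1$ and $S_2$: one expects
$$
D_\Omega^{c*}\widetilde S(\bar x|(\bar y_1,\bar y_2))(y_1^*,y_2^*)\subset D_\Omega^{c*}S_1(\bar x|\bar y_1)(y_1^*)+D_\Omega^{c*}S_2(\bar x|\bar y_2)(y_2^*),
$$
and similarly for $\widetilde D_\Omega^{c*}$, under the qualification condition \eqref{sumQC} and the PSNC assumption on one of the $S_i$. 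Specializing $(y_1^*,y_2^*)=(y^*,y^*)$ and combining with the chain‑rule inclusions then yields \eqref{sumrule1} and \eqref{sumrule2}.

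The product‑mapping coderivative estimate is the genuine work and the main obstacle. I would prove it directly by the relative fuzzy intersection rule (Theorem~\ref{fuzzy}) applied to the two sets $\Theta_1:=\gph S_1|_\Omega\times Y$ and $\Theta_2:=X\times(\text{a suitable rearrangement of }\gph S_2|_\Omega)$ inside $X\times Y\times Y$, whose intersection (up to coordinate permutation) is $\gph\widetilde S|_\Omega$; this is the same mechanism used inside the proof of Theorem~\ref{chainrule}. Starting from a sequence realizing $x^*\in D_\Omega^{c*}\widetilde S(\bar x|(\bar y_1,\bar y_2))(y_1^*,y_2^*)$, I apply the fuzzy intersection rule at each stage to split the $\varepsilon_k$‑regular normal into two pieces lying in the regular normal cones to $\gph S_i|_\Omega$ modulo small errors and lying in $T(\cdot;\Omega)\times Y^*$, use the qualification condition \eqref{sumQC} together with the PSNC property to rule out a degenerate limit (the multiplier $\lambda$ cannot escape to $0$), normalize $\lambda=1$, and pass to the weak$^*$ limit, checking along the way that the resulting pairs satisfy the boundedness/smallness estimates needed to re‑enter the $\varepsilon$‑regular normal cone and hence produce the required coderivative elements. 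The delicate points are, as usual, the boundedness of the auxiliary dual sequences (so that weak$^*$ limits exist, using reflexivity — here in fact the Hilbert setting, where $J$ is the identity and its image of a convex cone stays convex, cf.\ Remark~\ref{rem-conv}) and the case analysis of whether a combined dual pair tends to zero or stays bounded away from it, which is handled exactly as in the last paragraph of the proof of Theorem~\ref{chainrule}. The normal‑coderivative inclusion \eqref{sumrule2} is obtained by the same argument with weak$^*$ convergence of $y_k^*$ in place of strong convergence, so its proof can be omitted once \eqref{sumrule1} is in place.
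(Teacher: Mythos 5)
Your route is correct but genuinely different from the paper's, which is direct. You factor $S_1+S_2=\Sigma\circ\widetilde S$ with $\Sigma(y_1,y_2):=y_1+y_2$ and $\widetilde S(x):=S_1(x)\times S_2(x)$, invoke Theorem~\ref{chainrule} (with the PSNC and qualification conditions on the outer map $\Sigma$ automatic from strict differentiability) to pass from $D_\Omega^{c*}(S_1+S_2)(\bar x|\bar y)(y^*)$ and $\widetilde D_\Omega^{c*}(S_1+S_2)(\bar x|\bar y)(y^*)$ to the corresponding coderivatives of $\widetilde S$ at $(y^*,y^*)$, and then propose a coderivative product rule for $\widetilde S$ in terms of $S_1$ and $S_2$. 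The paper does not take this detour: it defines $\Theta_i:=\{(x,y_1,y_2):(x,y_i)\in\gph S_i|_\Omega\}$, uses the inner-semicontinuity selections $(y_{1k},y_{2k})\in G(x_k,y_k)$ to write the given sequence as
\[
(x_k^*,-y_k^*,-y_k^*)\in\widehat N_{\varepsilon_k}^c\big((x_k,y_{1k},y_{2k});\Theta_1\cap\Theta_2\big)\cap\big[T(x_k;\Omega)\times Y\times Y\big],
\]
and then applies the relative fuzzy intersection rule (Theorem~\ref{fuzzy}) once. The decisive observation is that $\Theta_1\cap\Theta_2=\gph\widetilde S|_\Omega$ (up to ordering), so the product-rule step you label ``the genuine work'' would, once written out, reproduce the paper's argument almost verbatim; meanwhile the preliminary chain-rule layer you add runs its own extremal-principle machinery inside Theorem~\ref{chainrule} and so costs an additional (ultimately superfluous) approximation pass. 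What your modular approach buys is a reusable product rule for $D_\Omega^{c*}\widetilde S$ that the paper never isolates; what it costs is that extra layer of indirection. The treatment of the degenerate multiplier $\lambda_k\downarrow 0$ via \eqref{sumQC} and the relative PSNC hypothesis, and the endgame case analysis on whether $(x_{ik}^*,-y_{ik}^*)$ tends to zero, are the same in both routes, so there is no gap in principle --- only redundancy.
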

\begin{proof}
We verify only \eqref{sumrule1}, since the proof of \eqref{sumrule2} is similar. Take any $(x^*,y^*)\in X\times Y$ with $x^*\in D_\Omega^{c*}(S_1+S_2)(\bar x|\bar y)(y^*)$ and find sequences
\begin{equation*}
\varepsilon_k\downarrow0,\;(x_k,y_k)\stackrel{\operatorname{gph} (S_1+S_2)|_\Omega}{\longrightarrow}(\bar x,\bar y),\;\mbox{ and}
\end{equation*}
\begin{equation*}
(x_k^*,-y_k^*)\in\widehat N_{\varepsilon_k}^c\big((x_k,y_k);\operatorname{gph} (S_1+S_2)|_\Omega\big)\cap[T(x_k;\Omega)\times Y]
\end{equation*}
satisfying the convergence conditions
$$
x_k^*\rightharpoonup^*x^*~~\operatorname{and}~~y_k^*\to y^*\;\mbox{ as }\;k\to\infty.
$$
By the inner semicontinuity of $G$ relative to $\Omega\times Y$ at $(\bar x,\bar y,\bar y_1,\bar y_2)$, there exists a sequence $(y_{1k},y_{2k})\to(\bar y_1,\bar y_2)$ with $(y_{1k},y_{2k})\in G(x_k,y_k)$ for all $k\in\mathbb N$. Define the sets
$$
\Theta_i:=\big\{(x,y_1,y_2)\in X\times Y\times Y~\big|~(x,y_i)\in\operatorname{gph} S_i|_\Omega\big\}~~\operatorname{for}~i=1,2,
$$
which are locally closed around $(\bar x,\bar y_1,\bar y_2)$ since the graphs of $S_i$ are assumed to be locally closed around $(\bar x,\bar y_i)$, $i = 1, 2$. We have $(x_k,y_{1k},y_{2k})\in\Theta_1\cap\Theta_2$ and easily check that
\begin{equation}\label{sum1}
(x_k^*,-y_k^*,-y_k^*)\in\widehat N_{\varepsilon_k}^c\big((x_k,y_{1k},y_{2k});\Theta_1\cap\Theta_2\big)\cap[T(x_k;\Omega)\times Y\times Y]~~\operatorname{for~all}~k\in\mathbb N.
\end{equation}
We know that the sequence $\{(x_k^*,-y_k^*)\}$ is bounded and hence find $L>0$ such that $\|(x_k^*,-y_k^*)\|_{X\times Y}\leq L$ for all $k\in\mathbb N$. Applying the fuzzy rule from Theorem~\ref{fuzzy} to the intersection in \eqref{sum1} along some $\varepsilon_k\downarrow0$ gives us $\lambda_k\ge 0$, $(\tilde x_{ik},\tilde y_{ik})\in\operatorname{gph} S_i|_\Omega$, and 
$$
(x_{ik}^*,y_{ik}^*)\in\big[\widehat N\big((x_k,y_{ik});\operatorname{gph} S_i|_\Omega\big)+(L+1)\varepsilon_k(B_X\times B_Y)\big]\cap[T(x_{ik};\Omega)\times Y]
$$
such that $\|(\tilde x_{ik},\tilde y_{ik})-(x_k,y_{ik})\|_{X\times Y}\leq\varepsilon_k$, $i=1,2$, and
\begin{equation}\label{sum2}
\|\lambda_k(x_k^*,-y_k^*,-y_k^*)-(x_{1k}^*+x_{2k}^*,-y_{1k}^*,-y_{2k}^*)\|_{X\times Y\times Y}\leq\varepsilon_k
\end{equation}
with $\max\{\lambda_k,\|(x_{1k}^*,y_{1k}^*)\|_{X\times Y}\}=1$. We now claim that there exists $\lambda_0>0$ for which $\lambda_k>\lambda_0$ whenever $k\in\mathbb N$. Indeed, assuming the contrary tells us that $\lambda_k\downarrow0$, and then \eqref{sum2} yields $y_{ik}^*\to0$, $i=1,2$, and $x_{1k}^*+x_{2k}^*\to 0$ as $k\to\infty$. Furthermore, we have $\|(x_{ik}^*,-y_{ik}^*)\|_{X\times Y}>\frac{1}{2}$ for all large $k$ and $i=1,2$, which ensures that
$$
(x_{ik}^*,-y_{ik}^*)\in\widehat N_{2(L+1)\varepsilon_k}^c\big((x_k,y_{ik});\operatorname{gph} S_i|_\Omega\big)\cap[T(x_{ik};\Omega)\times Y]
$$
for such $k$ and $i$. 
Since $\{x_{1k}^*\}$ is bounded along a subsequence, there exists $x_0^*\in X$ such that $x_{1k}^*\rightharpoonup^*x_0^*$ and also $x_{2k}^*\rightharpoonup^*-x_0^*$ as $k\to\infty$ without loss of generality. Thus
$$
x_0^*\in D_\Omega^{c*}S_1(\bar x|\bar y_1)(0)\cap(-D_\Omega^{c*}S_2(\bar x|\bar y_2))(0),
$$
which implies by the qualification condition \eqref{sumQC} that $x_0^*=0$. For definiteness, suppose that $S_1$ is PSNC relative to $\Omega$ at $(\bar x,\bar y_1)$. Then $x_{1k}^*\to 0$ that contradicts the condition $\max\{\lambda_k,\|(x_{1k}^*,y_{1k}^*)\|_{X\times Y}\}=1$ due to $\lambda_k\downarrow0$ and $y_{1k}^*\to0$ as $k\to\infty$.

Hence we can put $\lambda_k=1$ without loss of generality. Taking into account that $x_k^*\rightharpoonup^*x^*$ and $y_k^*\to y^*$, it follows from \eqref{sum2} that $y_{ik}^*\to y^*$ and $x_{ik}^*\rightharpoonup^* x_i^*$, $i=1,2$, for some $x_i^*$ with $x^*=x_1^*+x_2^*$. Then we only need to show that $x_i^*\in D_\Omega^{c*}S_i(\bar x|\bar y_i)(y^*)$ for $i=1,2$. To proceed, observe that if $(x_{ik}^*,-y_{ik}^*)\to(0,0)$ as $k\to\infty$, then $(x_i^*,y^*)=(0,0)$, which immediately gives us \eqref{sumrule1}. In the other case, there exists $M>0$ such that $\|(x_{ik}^*,-y_{ik}^*)\|_{X\times Y}>M$ for all $k\in\mathbb N$, and then we get
$$
(x_{ik}^*,-y_{ik}^*)\in\widehat N_{(L+1)\varepsilon_k/M}^c\big((x_k,y_{ik});\operatorname{gph} S_i|_\Omega\big)\cap[T(x_k;\Omega)\times Y]
$$
for all $k\in\mathbb N$ and $i=1,2$. This yields $x_i^*\in D_\Omega^{c*}S_i(\bar x|\bar y_i)(y^*)$ for $i=1,2$, which verifies \eqref{sumrule1} and thus completes the proof of the theorem.
\end{proof}

Finally, the pointbased characterizations of the relative Lipschitz-like property from Section~\ref{sec:point} lead us to the following corollary ensuring the fulfillment of the qualification and relative PSNC conditions of Theorem~\ref{sumrule}.

\begin{corollary}\label{cor:sum} In the general setting of Theorem~\ref{sumrule}, assume that either $S_1$ is relative Lipschitz-like around $(\ox,\oy_1)$, or $S_2$ is relative Lipschitz-like around $(\ox,\oy_2)$. Then both sum rule inclusions \eqref{sumrule1} and \eqref{sumrule2} are satisfied.   \end{corollary}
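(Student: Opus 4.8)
The plan is to reduce Corollary~\ref{cor:sum} to Theorem~\ref{sumrule} by showing that the relative Lipschitz-like property of one of the summands automatically supplies the two remaining hypotheses of that theorem, namely the one-sided relative PSNC condition and the mixed coderivative qualification condition \eqref{sumQC}. By symmetry it suffices to treat the case in which $S_1$ has the Lipschitz-like property relative to $\Omega$ around $(\bar x,\bar y_1)$; note that $(\bar x,\bar y_1)\in\operatorname{gph}S_1|_\Omega$, since $\bar x\in\Omega$ and $\bar y_1\in S_1(\bar x)$ both follow from $(\bar y_1,\bar y_2)\in G(\bar x,\bar y)$ and $(\bar x,\bar y)\in\operatorname{gph}(S_1+S_2)|_\Omega$.

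First I would apply Theorem~\ref{criterion} to $S_1$ relative to $\Omega$ at $(\bar x,\bar y_1)$. As $X$ and $Y$ are Hilbert, hence reflexive, the implications {\rm(i)}$\Rightarrow${\rm(ii)}$\Rightarrow${\rm(iii)} hold without any extra closedness or convexity requirement, so the relative Lipschitz-like property of $S_1$ gives both that $S_1$ is PSNC at $(\bar x,\bar y_1)$ relative to $\Omega$ and that $D_\Omega^{c*}S_1(\bar x|\bar y_1)(0)=\{0\}$. The former is precisely the one-sided relative PSNC hypothesis of Theorem~\ref{sumrule}.

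Second, I would deduce the qualification condition \eqref{sumQC} from $D_\Omega^{c*}S_1(\bar x|\bar y_1)(0)=\{0\}$. Since $0\in D_\Omega^{c*}S_2(\bar x|\bar y_2)(0)$ --- which one sees from \eqref{mixedcoder} by taking the constant sequences $(x_k,y_k)\equiv(\bar x,\bar y_2)$, $x_k^*\equiv0$, $y_k^*\equiv0$ with any $\varepsilon_k\downarrow0$, using that $(0,0)\in\widehat N_{\varepsilon_k}^c((\bar x,\bar y_2);\operatorname{gph}S_2|_\Omega)$ trivially and that $0=J(0)\in J(T(\bar x;\Omega))$ --- the set $D_\Omega^{c*}S_1(\bar x|\bar y_1)(0)\cap(-D_\Omega^{c*}S_2(\bar x|\bar y_2)(0))$ contains $0$ and is contained in $D_\Omega^{c*}S_1(\bar x|\bar y_1)(0)=\{0\}$, hence equals $\{0\}$. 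The remaining hypotheses of Theorem~\ref{sumrule} --- local closedness of $\operatorname{gph}S_i$, inner semicontinuity of $G$ relative to $\Omega\times Y$ at $(\bar x,\bar y,\bar y_1,\bar y_2)$, and closedness and convexity of $\Omega$ --- are part of the assumed general setting, so Theorem~\ref{sumrule} then yields both \eqref{sumrule1} and \eqref{sumrule2}.

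The argument is essentially a matter of bookkeeping and I do not expect any genuinely hard step; the one point deserving care is the realization that a single relative Lipschitz-like summand produces both missing ingredients at once, which is exactly what the chain {\rm(i)}$\Rightarrow${\rm(iii)} of Theorem~\ref{criterion} delivers. One should also check that the case in which $S_2$ (rather than $S_1$) is relative Lipschitz-like is handled identically, which is immediate because the PSNC hypothesis and \eqref{sumQC} in Theorem~\ref{sumrule} are symmetric in $S_1$ and $S_2$.
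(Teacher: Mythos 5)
Your argument is correct and coincides with the paper's own proof: both invoke Theorem~\ref{criterion}, implication (i)$\Rightarrow$(iii), to extract simultaneously the one-sided relative PSNC property and the condition $D_\Omega^{c*}S_1(\bar x|\bar y_1)(0)=\{0\}$, from which the qualification condition \eqref{sumQC} follows. You are somewhat more careful than the paper in explicitly checking that $0\in D_\Omega^{c*}S_2(\bar x|\bar y_2)(0)$ so that the intersection in \eqref{sumQC} is nonempty, but this is a routine verification that does not change the route.
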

\begin{proof} Suppose for definiteness that $S_1$ is relative Lipschitz-like around $(\ox,\oy_1)$. Then Theorem~\ref{criterion} tells us that $D^{c*}_\O S_1(\ox|\oy_1)(0)=\{0\}$, and hence the qualification condition \eqref{sumQC} holds. This coderivative criterion also yields the fulfillment of the relative PSNC property from Definition~\ref{psnc}, and thus the claimed statement is verified.
\end{proof}\vspace*{-0.15in}

\section{Concluding Remarks}\label{sec:conc}

This paper introduces and develops new notions of relative generalized normals and corresponding coderivatives for set-valued mappings in Banach spaces and use them for deriving necessary conditions, sufficient conditions, and characterizations of the relative Lipschitz-like property of general multifunctions. Moreover, it is shown that the limiting conic contingent coderivatives in the obtained characterizations enjoy pointbased calculus rules, which justify their broad usage in applications. To achieve such a calculus, we develop a powerful technique of variational analysis resulting in the novel relative extremal principle for systems of sets and the corresponding fuzzy intersection rule.

The major goals of our future research revolve around applications of the newly developed results to various problems of optimization and equilibria. From the technical viewpoint, we plan to relax the convexity assumption on the constraint set in the stability results and calculus rules. Furthermore, we intend to generalize the Hilbert space requirement in the fuzzy intersection and pointbased sum rules derived in Section~\ref{sec:sum}.\vspace*{-0.15in}


\begin{thebibliography}{30}

\bibitem{Bonnans2000} J. F. Bonnans and A. Shapiro, {\it Perturbation Analysis of Optimization Problems}, Springer, New York, 2000.

\bibitem{Borwein2005} J. M. Borwein and Q.  J. Zhu, {\it Techniques of Variational Analysis}, Springer, New York, 2005.

\bibitem{Browder1983} F. E. Browder, Fixed point theory and nonlinear problems, {\it Bull. Amer. Math. Soc.} {\bf 9} (1983), 1--39.

\bibitem{fabian} M. Fabian et al., {\it Functional Analysis and Infinite-Dimensional Geometry}, 2nd edition, Springer, New York, 2011.

\bibitem{Ioffe} A. D. Ioffe, {\it Variational Analysis of Regular Mappings: Theory and Applications}, Springer, Cham, Switzerland, 2017.

\bibitem{Yang2021} K. W. Meng, M. H. Li, W. F. Yao and X. Q. Yang, Lipschitz-like property relative to a set and the generalized Mordukhovich criterion, {\it Math. Program.} {\bf 189} (2021), 455--489.

\bibitem{Mordukhovich2006} B. S. Mordukhovich, {\it Variational Analysis and Generalized Differentiation, I: Basic Theory, II; Applications}, Springer, Berlin, 2006.

\bibitem{m18}  B. S. Mordukhovich, {\it Variational Analysis and Applications}, Springer, Cham, Switzerland, 2018.

\bibitem{Nam} B. S. Mordukhovich and N. M. Nam, {\it Convex Analysis and Beyond, I: Basic Theory}, Springer, Cham, Switzerland, 2022.

\bibitem{Mordukhovich2023} B. S. Mordukhovich, P. C. Wu and X. Q. Yang, Relative well-posedness of constrained systems with applications to variational inequalities, https://arxiv.org/abs/2212.02727 (2023).

\bibitem{Rockafellar1998} R. T. Rockafellar and R. J-B Wets, {\it Variational Analysis}, Springer, Berlin, 1998.

\bibitem{Thibault} L. Thibault, {\it Unilateral Variational Analysis in Banach Spaces}, published in two volumes, World Scientific, Singapore, 2023.

\bibitem{Yao2023} W. F. Yao, K. W. Meng, M. H. Li and X. Q. Yang, Projectional coderivatives and calculus rules, {\it Set-Valued Var. Anal.} {\bf 31} (2023), 36; https://doi.org/10.1007/s11228-023-00698-9

\end{thebibliography}
\end{document}